\documentclass[a4paper,12pt]{book}
\usepackage{graphicx,epsfig}
\usepackage{amsfonts}
\usepackage{amssymb}
\usepackage{amsmath}

\usepackage{times}
\usepackage{amsmath,amsthm}
\usepackage{graphics}

\newtheorem{theo}{Theorem}
\newtheorem{lem}{Lemma}
\newtheorem{cor}{Corollary}
\newtheorem{rem}{Remark}
\newtheorem{ex}{Example}
\newtheorem{defn}{Definition}
\def\RR{\mathbb R}
\def\CC{\mathbb C}

\def\pmatrix{ \left( \begin{array} }
\def\endpmatrix{ \end{array} \right) }
\def\sigmd{{\dot\sigma}}

\def\no{\noindent}

\def\d{{\rm d}}

\def\pmatrix{ \left( \begin{array} }
\def\endpmatrix{ \end{array} \right) }
\def\aa{\alpha}

\def\cc{\gamma}
\def\dd{\delta}

\def\lam{\lambda}

\def\bfp{\boldsymbol{p}}

\def\bfgam{\boldsymbol{\gamma}}
\def\bfdel{\boldsymbol{\delta}}

\def\bfpsi{\boldsymbol{\psi}}

\def\bfp{\boldsymbol{p}}
\def\bfq{\boldsymbol{q}}

\def\bfu{\boldsymbol{u}}
\def\bfv{\boldsymbol{v}}

\def\bfy{\boldsymbol{y}}

\def\bfo{\boldsymbol{0}}

\def\d2dxx{\frac{\partial^2}{\partial x^2}}

\def\no{\noindent}
\def\diag{{\rm diag}}
\def\proof{\underline{Proof}\quad}
\def\QED{~\mbox{$\Box$}}

\def\phi{\varphi}

\def\I{{\cal I}}
\def\P{{\cal P}}

\def\O{\Omega}
\def\dd{\mathrm{d}}

\title{\bf The Hamiltonian BVMs (HBVMs) Homepage\thanks{
Work developed within the project {\em Numerical Methods and Software for
Differential Equations}.}}

\author{Luigi Brugnano\thanks{Dipartimento di Matematica ``U.\,Dini'',
Universit\`a di Firenze, Italy \mbox{e-mail: {\tt luigi.brugnano@unifi.it}}}
\and Felice Iavernaro\thanks{Dipartimento di Matematica,
Universit\`a di Bari, Italy \mbox{e-mail: {\tt felix@dm.uniba.it}}} \and
Donato Trigiante\thanks{Dipartimento di Energetica ``S.\,Stecco'',
Universit\`a di Firenze, Italy, \mbox{e-mail: {\tt trigiant@unifi.it}}}}

\date{February 24, 2010}

\begin{document}

\maketitle

\chapter*{Preface}
{\em Hamiltonian Boundary Value Methods} (in short, {\em HBVMs})
is a new class of numerical methods for the efficient numerical
solution of canonical Hamiltonian systems. In particular, their
main feature  is that of exactly preserving, for the numerical
solution, the value of the Hamiltonian function, when the latter
is a polynomial of arbitrarily high degree.

Clearly, this fact implies a practical conservation of any
analytical Hamiltonian function.

In this notes, we collect the introductory material on HBVMs
contained in the {\em HBVMs Homepage}, available at the url:

\medskip
\centerline{\tt http://web.math.unifi.it/users/brugnano/HBVM/index.html}

\medskip
The notes are organized as follows:

\begin{itemize}
\item Chapter 1: Basic Facts about HBVMs

\item Chapter 2: Numerical Tests

\item Chapter 3: Infinity HBVMs

\item Chapter 4: Isospectral Property of HBVMs and their connections with Runge-Kutta
collocation methods

\item Chapter 5: Blended HBVMs

\item Chapter 6: Notes and References

\item Bibliography

\end{itemize}

\chapter{Basic Facts about HBVMs}\label{chap1}

%\section{Introduction}
We consider Hamiltonian problems in the form
\begin{equation}\label{hamilode}
\dot y(t) = J\nabla H(y(t)),  \qquad y(t_0) = y_0\in\RR^{2m},
\end{equation}

\no where $J$ is a skew-symmetric constant matrix, and the
Hamiltonian $H(y)$ is assumed to be sufficiently differentiable.
Usually,
$$J = \pmatrix{rr} & I_m\\ -I_m\endpmatrix, \qquad y =
\pmatrix{c}q\\p\endpmatrix, \quad q,p\in\RR^m,$$
so that (\ref{hamilode}) assumes the form
$$\dot q = \nabla_p H(q,p), \qquad \dot p = -\nabla_q H(q,p).$$

\no The induced dynamical system is characterized by the presence
of invariants of motion, among which the Hamiltonian itself:
$$\dot H(y(t)) = \nabla H(y(t))^T \dot y(t) = \nabla
H(y(t))^TJ\nabla H(y(t)) = 0,$$

\no due to the fact that $J$ is skew-symmetric. Such property is
usually lost, when numerically solving problem (\ref{hamilode}).
This drawback can be overcome by using Hamiltonian BVMs
(hereafter, HBVMs).

The key formula which HBVMs rely on, is the {\em line integral}
and the related property of conservative vector fields:
\begin{equation}\label{Hy}
H(y_1) - H(y_0) = h\int_0^1 \sigmd(t_0+\tau h)^T\nabla
H(\sigma(t_0+\tau h))\dd\tau,
\end{equation}

\no for any $y_1 \in \RR^{2m}$, where $\sigma$ is any smooth
function such that
\begin{equation}
\label{sigma}\sigma(t_0) = y_0, \qquad\sigma(t_0+h) =
y_1.
\end{equation}

\no Here we consider the case where $\sigma(t)$ is a polynomial of
degree $s$, yielding an approximation to the true solution $y(t)$
in the time interval $[t_0,t_0+h]$. The numerical approximation
for the subsequent time-step, $y_1$, is then defined by
(\ref{sigma}). After introducing a set of $s$ distinct abscissae
\begin{equation}\label{ci}0<c_{1},\ldots ,c_{s}\le1,\end{equation}

\no we set
\begin{equation}\label{Yi}Y_i=\sigma(t_0+c_i h), \qquad
i=1,\dots,s,\end{equation}

\no so that $\sigma(t)$ may be thought of as an interpolation
polynomial, interpolating the {\em fundamental stages} $Y_i$,
$i=1,\dots,s$. We observe that, due to (\ref{sigma}), $\sigma(t)$
also interpolates the initial condition $y_0$.

\begin{rem}\label{c0} Sometimes, the interpolation at $t_0$ is
explicitly  required. In such a case, the extra abscissa $c_0=0$
is formally added to (\ref{ci}). This is the case, for example, of
a Lobatto distribution of the abscissae
\cite{brugnano09bit}.\end{rem}

Let us consider the following expansions of
$\dot \sigma(t)$ and $\sigma(t)$ for $t\in [t_0,t_0+h]$:
\begin{equation}
\label{expan} \dot \sigma(t_0+\tau h) = \sum_{j=1}^{s} \gamma_j
P_j(\tau), \qquad \sigma(t_0+\tau h) = y_0 + h\sum_{j=1}^{s}
\gamma_j \int_{0}^\tau P_j(x)\,\dd x,
\end{equation}

\no where $\{P_j(t)\}$ is a suitable basis of the vector space of
polynomials of degree at most $s-1$ and  the  (vector)
coefficients $\{\gamma_j\}$ are to be determined. Because of the
arguments in \cite{brugnano09bit,BIT09,BIT10}, we shall consider
an {\bf orthonormal basis} of polynomials on the interval $[0,1]$,
i.e.:
\begin{equation}\label{orto}\int_0^1 P_i(t)P_j(t)\dd t = \delta_{ij}, \qquad
i,j=1,\dots,s,\end{equation}

\no where $\delta_{ij}$ is the Kronecker symbol, and $P_i(t)$ has
degree $i-1$. Such a basis can be readily obtained as
\begin{equation}\label{orto1}P_i(t) = \sqrt{2i-1}\,\hat P_{i-1}(t),
\qquad i=1,\dots,s,\end{equation} with $\hat P_{i-1}(t)$ the shifted
Legendre polynomial, of degree $i-1$, on the interval $[0,1]$.

\begin{rem}\label{recur}
 From the properties of shifted Legendre polynomials (see, e.g.,
\cite{AS} or the Appendix in \cite{brugnano09bit}), one readily
obtains that the polynomials $\{P_j(t)\}$ satisfy the three-terms
recurrence:
\begin{eqnarray*}
P_1(t)&\equiv& 1, \qquad P_2(t) = \sqrt{3}(2t-1),\\
P_{j+2}(t) &=& (2t-1)\frac{2j+1}{j+1} \sqrt{\frac{2j+3}{2j+1}} P_{j+1}(t)
-\frac{j}{j+1}\sqrt{\frac{2j+3}{2j-1}} P_j(t), \quad j\ge1.
\end{eqnarray*}
\end{rem}

We shall also assume that $H(y)$ is a polynomial, which implies
that the integrand in \eqref{Hy} is also a polynomial so that the
line integral can be exactly computed by means of a suitable
quadrature formula. In general, however, due to the high degree of
the integrand function,  such quadrature formula cannot be solely
based upon the available abscissae $\{c_i\}$: one needs to
introduce an additional set of abscissae $\{\hat c_1, \dots,\hat
c_r\}$, distinct from the nodes $\{c_i\}$, in order to make the
quadrature formula exact:
\begin{eqnarray} \label{discr_lin}
\displaystyle \lefteqn{\int_0^1 \sigmd(t_0+\tau h)^T\nabla
H(\sigma(t_0+\tau h))\dd\tau   =}\\ && \sum_{i=1}^s \beta_i
\sigmd(t_0+c_i h)^T\nabla H(\sigma(t_0+c_i h)) + \sum_{i=1}^r \hat
\beta_i \sigmd(t_0+\hat c_i h)^T\nabla H(\sigma(t_0+\hat c_i h)),
\nonumber
\end{eqnarray}

\no where $\beta_i$, $i=1,\dots,s$, and $\hat \beta_i$,
$i=1,\dots,r$, denote the weights of the quadrature formula
corresponding to the abscissae $\{c_i\}$ and $\{\hat c_i\}$,
respectively, i.e.,
\begin{eqnarray}\nonumber
\beta_i &=& \int_0^1\left(\prod_{ j=1,j\ne i}^s
\frac{t-c_j}{c_i-c_j}\right)\left(\prod_{j=1}^r
\frac{t-\hat c_j}{c_i-\hat c_j}\right)\dd t, \qquad i = 1,\dots,s,\\
\label{betai}\\ \nonumber \hat\beta_i &=& \int_0^1\left(\prod_{
j=1}^s \frac{t-c_j}{\hat c_i-c_j}\right)\left(\prod_{ j=1,j\ne
i}^r \frac{t-\hat c_j}{\hat c_i-\hat c_j}\right)\dd t, \qquad i =
1,\dots,r.
\end{eqnarray}

\begin{rem}\label{c01}
In the case considered in the previous Remark~\ref{c0}, i.e. when
$c_0=0$ is formally considered together with the abscissae
(\ref{ci}), the first product in each formula in (\ref{betai})
ranges from $j=0$ to $s$. Moreover, also the range of
$\{\beta_i\}$ becomes $i=0,1,\dots,s$. However, for sake of
simplicity, we shall not consider this case further.
\end{rem}

According to \cite{IT2}, the right-hand side of \eqref{discr_lin}
is called \textit{discrete line integral}, while the vectors
\begin{equation}\label{hYi}
\hat Y_i = \sigma(t_0+\hat c_i h), \qquad i=1,\dots,r,
\end{equation}

\no are called \textit{silent stages}: they just serve to
increase, as much as one likes, the degree of precision of the
quadrature formula, but they are not to be regarded as unknowns
since, from \eqref{expan}, they can be expressed in terms of
linear combinations of the \textit{fundamental stages} (\ref{Yi}).

\begin{defn}\label{defhbvmks}
The method defined by substituting the quantities in \eqref{expan}
into the right-hand side of \eqref{discr_lin}, and by choosing the
unknown coefficients $\{\gamma_j\}$ in order that the resulting
expression vanishes, is called {\em Hamiltonian Boundary Value
Method with $k$ steps and degree $s$}, in short {\em
HBVM($k$,$s$)}, where $k=s+r$ \, \cite{brugnano09bit}.\end{defn}

In such a way, one easily obtains, from (\ref{Hy})--(\ref{sigma}),
$$H(\sigma(t_0+h)) = H(y_0),$$

\no that is, the value of the Hamiltonian is {\em exactly}
preserved at the subsequent approximation, provided by
$\sigma(t_0+h)$.

In the sequel, we shall see that HBVMs may be expressed through
different, though equivalent, formulations: some of them can be
directly implemented in a computer program, the others being of
more theoretical interest.

Because of the equality \eqref{discr_lin}, we can apply the
procedure directly to the original line integral appearing in the
left-hand side. With this premise, by considering the first
expansion in \eqref{expan},  the conservation property  reads
\begin{equation}
\label{conservation} \sum_{j=1}^{s} \gamma_j^T \int_0^1  P_j(\tau)
\nabla H(\sigma(t_0+\tau h))\dd\tau=0,
\end{equation}

\no which, as is easily checked,  is certainly satisfied if we
impose the following set of orthogonality conditions
\begin{equation}
\label{orth} \gamma_j = \int_0^1  P_j(\tau) J \nabla
H(\sigma(t_0+\tau h))\dd\tau, \qquad j=1,\dots,s.
\end{equation}

\no Then, from the second relation of \eqref{expan} we obtain, by introducing
the operator
\begin{eqnarray}\label{Lf}\lefteqn{L(f;h)\sigma(t_0+ch) =}\\
\nonumber
&& \sigma(t_0)+h\sum_{j=1}^s \int_0^c P_j(x) \dd x \,
\int_0^1 P_j(\tau)f(\sigma(t_0+\tau h))\dd\tau,\qquad
c\in[0,1],\end{eqnarray}

\no that $\sigma$ is the eigenfunction of $L(J\nabla H;h)$
relative to the eigenvalue $\lambda=1$:
\begin{equation}\label{L}\sigma = L(J\nabla H;h)\sigma.\end{equation}

\begin{defn} Equation (\ref{L}) is the {\em Master Functional
Equation} defining $\sigma$ ~\cite{BIT09}.\end{defn}

\begin{rem}\label{MFE}
 From the previous arguments, one readily obtains that the Master
Functional Equation (\ref{L}) characterizes HBVM$(k,s)$ methods, for
all $k\ge1$. Indeed, such methods are uniquely defined by the
polynomial $\sigma$, of degree $s$, the number of steps $k$ being
only required to obtain an exact quadrature formula (see
(\ref{discr_lin})).\end{rem}

To practically compute $\sigma$, we set (see (\ref{Yi}) and
(\ref{expan}))
\begin{equation}
\label{y_i}
Y_i=  \sigma(t_0+c_i h) = y_0+ h\sum_{j=1}^{s} a_{ij}  \gamma_j, \qquad
i=1,\dots,s,
\end{equation}

\no where
\begin{equation}\label{aij}
a_{ij}=\int_{0}^{c_i} P_j(x) \dd x, \qquad
i,j=1,\dots,s.\end{equation}

\no Inserting \eqref{orth} into \eqref{y_i} yields the final
formulae which define the HBVMs class based upon the orthonormal
basis $\{P_j\}$:
\begin{equation}
\label{hbvm_int} Y_i=y_0+h\sum_{j=1}^s a_{ij}  \int_0^1 P_j(\tau)
J \nabla H(\sigma(t_0+\tau h))\dd\tau, \qquad i=1,\dots,s.
\end{equation}

For sake of completeness, we report the nonlinear system associated with the
HBVM$(k,s)$ method, in terms of the fundamental stages $\{Y_i\}$ and the silent
stages $\{\hat Y_i\}$ (see (\ref{hYi})), by using the notation
\begin{equation}\label{fy}
f(y) = J \nabla H(y).
\end{equation}

\no In this context, it  represents the discrete counterpart of
\eqref{hbvm_int}, and may be directly retrieved by evaluating, for
example, the integrals in \eqref{hbvm_int} by means of the (exact)
quadrature formula introduced in \eqref{discr_lin}:
\begin{eqnarray}\label{hbvm_sys}
\lefteqn{ Y_i =}\\
&& y_0+h\sum_{j=1}^s a_{ij}\left( \sum_{l=1}^s \beta_l
P_j(c_l)f(Y_l) + \sum_{l=1}^r\hat \beta_l P_j(\hat c_l) f(\widehat
Y_l) \right),\quad i=1,\dots,s.\nonumber
\end{eqnarray}

\no From the above discussion it is clear that, in the
non-polynomial case, supposing to choose the abscissae $\{\hat
c_i\}$ so that the sums in (\ref{hbvm_sys}) converge to an
integral as $r=k-s\rightarrow\infty$, the resulting formula is
\eqref{hbvm_int}. This implies that  HBVMs may be as well applied
in the non-polynomial case since, in finite precision arithmetic,
HBVMs are indistinguishable from their limit formulae
\eqref{hbvm_int}, when a sufficient number of silent stages is
introduced. The aspect of having a {\em practical} exact integral,
for $k$ large enough, was already stressed in \cite{BIS,
brugnano09bit, BIT09, IP1, IT2}.

We emphasize that, in the non-polynomial case, \eqref{hbvm_int}
becomes an operative method, only after that a suitable strategy
to approximate the integral is taken into account. In the present
case, if one discretizes the {\em Master Functional Equation}
(\ref{Lf})--(\ref{L}), HBVM$(k,s)$ are then obtained, essentially
by extending the discrete problem (\ref{hbvm_sys}) also to the
silent stages (\ref{hYi}). In order to simplify the exposition, we
shall use (\ref{fy}) and introduce the following notation:
\begin{eqnarray}\nonumber
\{\tau_i\} = \{c_i\} \cup \{\hat{c}_i\}, &&
\{\omega_i\}=\{\beta_i\}\cup\{\hat\beta_i\},\\
\label{tiyi}\\ \nonumber y_i = \sigma(t_0+\tau_ih), && f_i =
f(\sigma(t_0+\tau_ih)), \qquad i=1,\dots,k.
\end{eqnarray}

\no The discrete problem defining the HBVM$(k,s)$ then becomes,
\begin{equation}\label{hbvmks}
y_i = y_0 + h\sum_{j=1}^s \int_0^{\tau_i} P_j(x)\dd x
\sum_{\ell=1}^k \omega_\ell P_j(\tau_\ell)f_\ell, \qquad
i=1,\dots,k.
\end{equation}

\begin{rem}\label{ecc} We also observe that, from (\ref{orth}) and
the first relation in (\ref{expan}), one obtains the equations
\begin{equation}\label{ecceq} \dot\sigma(t_0+\tau_ih) = \sum_{j=1}^s
 P_j(\tau_i) \int_0^1 P_j(\tau)J\nabla H(\sigma(t_0+\tau
h))\dd\tau, \qquad i=1,\dots,k,
\end{equation}

\no which may be viewed as {\em extended collocation conditions}
according to \cite[Section\,2]{IT2}, where the integrals are
(exactly) replaced by discrete sums.
\end{rem}

By introducing the vectors $$\bfy = (y_1^T,\dots,y_k^T)^T, \qquad
e=(1,\dots,1)^T\in\RR^k,$$ and the matrices
\begin{equation}\label{OIP}\O=\diag(\omega_1,\dots,\omega_k), \qquad
\I_s,~\P_s\in\RR^{k\times s},\end{equation}

\no whose $(i,j)$th entry are given by
\begin{equation}\label{IDPO}
(\I_s)_{ij} = \int_0^{\tau_i} P_j(x)\dd x, \qquad
(\P_s)_{ij}=P_j(\tau_i), \end{equation}

\no we can cast the set of equations (\ref{hbvmks}) in vector form
as
\begin{equation}\label{rk0} \bfy = e\otimes y_0 + h(\I_s
\P_s^T\O)\otimes I_{2m}\, f(\bfy),\end{equation}

\no with an obvious meaning of $f(\bfy)$. Consequently, the method
can be seen as a Runge-Kutta method with the following Butcher
tableau:
\begin{equation}\label{rk}
\begin{array}{c|c}\begin{array}{c} \tau_1\\ \vdots\\ \tau_k\end{array} & \I_s \P_s^T\O\\
 \hline                    &\omega_1\, \dots~ \omega_k
                    \end{array}\end{equation}

\begin{rem}\label{ascisse} We observe that, because of the use of an
orthonormal basis, the role of the abscissae $\{c_i\}$ and of the
silent abscissae $\{\hat c_i\}$ is interchangeable, within the set
$\{\tau_i\}$. This is due to the fact that all the matrices
$\I_s$, $\P_s$, and $\O$ depend on all the abscissae $\{\tau_i\}$,
and not on a subset of them and, moreover, they are invariant with
respect to the choice of the fundamental abscissae $\{c_i\}$.
\end{rem}

The following result then holds true.

\begin{theo}\label{ordine} Provided that the quadrature defined by
the weights $\{\omega_i\}$ has order at least $2s$ (i.e., it is
exact for polynomials of degree at least $2s-1$), HBVM($k$,$s$)
has order $p=2 s\equiv 2\deg(\sigma)$, whatever the choice of the
abscissae $c_1,\dots,c_s$.
\end{theo}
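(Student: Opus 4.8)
The plan is to exploit the Runge--Kutta representation (\ref{rk}) and compare the HBVM($k$,$s$) solution with the solution of a reference method of known order. Since the HBVM($k$,$s$) is, by construction, a discretization of the ``limit'' formula (\ref{hbvm_int}) (equivalently, the Master Functional Equation (\ref{L})), the natural strategy is a two-step argument: first analyse the order of the \emph{limit} formula (\ref{hbvm_int}), which corresponds to taking $r\to\infty$ so that the quadrature is exact; then estimate the perturbation introduced by replacing the exact integrals by the quadrature defined by $\{\omega_i\}$, and show that if this quadrature has order at least $2s$ the perturbation does not degrade the order below $2s$.

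For the first step I would identify the limit method (\ref{hbvm_int}) with a known collocation-type / Gauss-type method. The key observation is that the polynomial $\sigma$ of degree $s$ satisfies $\dot\sigma(t_0+\tau h)=\sum_{j=1}^s\gamma_jP_j(\tau)$ with $\gamma_j=\int_0^1 P_j(\tau)f(\sigma(t_0+\tau h))\dd\tau$; by the orthonormality (\ref{orto}) this means $\dot\sigma$ is exactly the $L^2$-projection of $f(\sigma(t_0+\cdot\,h))$ onto $\mathbb{P}_{s-1}$ in the shifted Legendre basis. This is precisely the defining property of the $s$-stage Gauss collocation method (one of the classical characterizations of Gauss methods), which has order $2s$. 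Hence the limit formula (\ref{hbvm_int}) coincides with the Gauss--Legendre Runge--Kutta method of order $2s$, and its order is $2s$ regardless of the auxiliary abscissae; the abscissae $\{c_i\}$ only fix the basis of representation of $\sigma$, not $\sigma$ itself. I would also note, following Remark \ref{ascisse}, that the matrices $\I_s,\P_s,\O$ are symmetric functions of all the $\tau_i$ and so the role of $\{c_i\}$ versus $\{\hat c_i\}$ is immaterial, which is what makes the conclusion independent of the choice of $c_1,\dots,c_s$.

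For the second step I would compare the Butcher matrix $\I_s\P_s^T\O$ in (\ref{rk}) with the corresponding quantity for the exact-integral method. The discrete scheme (\ref{hbvmks}) replaces $\int_0^1 P_j(\tau)f(\sigma(t_0+\tau h))\dd\tau$ by $\sum_{\ell=1}^k\omega_\ell P_j(\tau_\ell)f_\ell$. The standard way to see that the order is preserved is to verify the simplifying assumptions $B(2s)$, $C(s)$, $D(s)$ (in Butcher's terminology) for the RK method (\ref{rk}): $C(s)$ holds because $(\I_s)_{ij}=\int_0^{\tau_i}P_j$, i.e. the internal stages integrate $\mathbb{P}_{s-1}$ exactly; $B(2s)$ is exactly the hypothesis that $\{\omega_i\}$ has order at least $2s$; and $D(s)$ follows from $B$ and $C$ together with the orthogonality built into $\P_s^T\O\I_s$. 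By the classical theorem (Butcher), $B(p)\wedge C(q)\wedge D(r)$ with $p\le 2q+2$ and $p\le q+r+1$ implies order $p$; here $p=2s$, $q=r=s$ works. I expect the main obstacle to be the clean verification of $D(s)$: one must show that $\sum_{i=1}^k\omega_i P_j(\tau_i)(\I_s)_{i\ell}$ has the right structure, i.e. that the ``projection then integrate'' composition encoded in $\P_s^T\O\I_s$ behaves, against polynomials up to the needed degree, exactly as it would with true integrals — and this is where the order-$2s$ hypothesis on the quadrature is used a second time, to push the relevant integrands (which have degree up to $2s-1$) through exactly. Once $B$, $C$, $D$ are in hand, the order statement, and its independence of the $c_i$, follows immediately.
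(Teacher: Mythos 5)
Your second step is, in outline, exactly the paper's proof: verify Butcher's simplifying assumptions for the tableau (\ref{rk}) and invoke the classical order theorem. But there is a genuine error in it: you claim $D(s)$, and $D(s)$ is false for $k>s$. After using the exactness of the quadrature (the integrands involved have degree at most $2s-1$) and integrating by parts, the verification of $D(q)$ reduces to the identity
\begin{equation*}
\sum_{\ell=1}^{s}P_\ell(\tau_i)\int_0^1P_\ell(x)\,x^{q}\,\dd x \;=\; \tau_i^{q},
\qquad i=1,\dots,k,
\end{equation*}
i.e.\ to the statement that the $L^2(0,1)$ projection of $x^{q}$ onto $\spann\{P_1,\dots,P_s\}$, evaluated at $\tau_i$, returns $\tau_i^{q}$. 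This holds for $q\le s-1$, since then $x^q$ already lies in that span, but fails for $q=s$: the defect is $\bigl(\int_0^1P_{s+1}(x)x^s\,\dd x\bigr)P_{s+1}(\tau_i)$, and $P_{s+1}$, having degree $s$, cannot vanish at $k>s$ distinct nodes. So only $D(s-1)$ is available. Fortunately that suffices: Butcher's theorem with $B(2s)$, $C(s)$, $D(s-1)$ gives order $2s$ because $2s\le s+(s-1)+1$. You should therefore target $D(s-1)$ and actually carry out the computation above (this is precisely what the paper does); as written, ``$D(s)$ follows from $B$ and $C$ together with the orthogonality'' is an unproved — and in fact false — assertion located at the one point you yourself identify as the main obstacle.

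Your first step also rests on a false identification: the limit formula (\ref{hbvm_int}) is \emph{not} the $s$-stage Gauss method. Gauss is collocation, $\dot\sigma(t_0+c_ih)=f(\sigma(t_0+c_ih))$ at the Gauss nodes, so $\dot\sigma$ is the \emph{interpolant} of $f(\sigma(\cdot))$ there; in (\ref{hbvm_int}), $\dot\sigma$ is the $L^2(0,1)$ \emph{orthogonal projection} of $f(\sigma(\cdot))$ onto polynomials of degree $s-1$. These differ as soon as $f(\sigma(\cdot))$ has degree exceeding $s$ — and they must differ, since HBVM$(\infty,s)$ is energy-conserving while Gauss is symplectic, and no Runge--Kutta method is both for general Hamiltonians. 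They coincide only on the linear test equation (which is how the paper proves $A$-stability, Theorem~\ref{stab}) and when $k=s$. This step is dispensable in any case: once $B(2s)$, $C(s)$ and $D(s-1)$ are verified directly for (\ref{rk}), the order $2s$ and its independence of the choice of $c_1,\dots,c_s$ follow with no reference to a limit method.
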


\begin{proof} From the classical result of Butcher (see, e.g.,
\cite[Theorem\,7.4]{HNW}), the thesis follows if the usual simplifying
assumptions $C(s)$, $B(p)$, $p\ge 2s$, and $D(s-1)$ are satisfied
for the Runge-Kutta method defined by the tableau (\ref{rk}). By
looking at the method (\ref{rk0})--(\ref{rk}), one has that the
first two (i.e., $C(s)$ and $B(p)$, $p\ge 2s$) are obviously
fulfilled: the former by the definition of the method, the second
by hypothesis. The proof is then completed, if we prove $D(s-1)$.
Such condition can be cast in matrix form, by introducing the
vector $\bar{e}=(1,\dots,1)^T\in\RR^{s-1}$, and the matrices
$$Q=\diag(1,\dots,s-1),\qquad D=\diag(\tau_1,\dots,\tau_k),\qquad
V=(\tau_i^{j-1})\in\RR^{k\times s-1},$$

\no (see also (\ref{IDPO})) as
$$Q V^T\O\left(\I_s\P_s^T\O\right) = \left(\bar{e}\,e^T -V^TD\right)\O,$$ i.e.,
\begin{equation}\label{finito}
\P_s\I_s^T\O V Q = \left(e\,\bar{e}^T -DV\right).
\end{equation}

\no Since the quadrature is exact for polynomials of degree
$2s-1$, one has
\begin{eqnarray*}
\left(\I_s^T\O VQ\right)_{ij} &=& \left( \sum_{\ell=1}^k
\omega_\ell \int_0^{\tau_\ell} P_i(x)\dd x\,(j \tau_\ell^{j-1})
\right) = \left(\int_0^1 \, \int_0^t P_i(x)\dd x (jt^{j-1})\dd
t\right)
\\&=& \left( \delta_{i1}-\int_0^1P_i(x)x^j\dd x\right),
\qquad i = 1,\dots,s,\quad j=1,\dots,s-1,\end{eqnarray*}

\no where the last equality is obtained by integrating by parts,
with $\delta_{i1}$ the Kronecker symbol. Consequently,
\begin{eqnarray*}\left(\P_s\I_s^T \O V Q\right)_{ij} &=& \left(1 - \sum_{\ell=1}^s
P_\ell(\tau_i)\int_0^1 P_\ell(x) x^j\dd x \right)\\
&=& (1-\tau_i^j), \qquad i=1,\dots,k,\quad
j=1,\dots,s-1,\end{eqnarray*}

\no that is, (\ref{finito}), where the last equality follows from
the fact that
$$\sum_{\ell=1}^s P_\ell(\tau)\int_0^1 P_\ell(x)
x^j\dd x = \tau^j, \qquad j=1,\dots,s-1.\QED$$
\end{proof}

\medskip Concerning the stability of the methods, the following result holds true.

\begin{theo}\label{stab} For all $k$ such that the quadrature
formula has order at least $2s\equiv 2\deg(\sigma)$, HBVM($k$,$s$)
is perfectly $A$-stable,\footnote{That is, its region of Absolute
stability precisely coincides with the left-half complex plane,
$\CC^-$.} whatever the choice of the abscissae $c_1,\dots,c_s$.
\end{theo}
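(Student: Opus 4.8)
The plan is to exploit the Runge--Kutta representation (\ref{rk0})--(\ref{rk}) of HBVM($k$,$s$) and reduce the question of perfect $A$-stability to a statement about the stability function evaluated on the scalar test equation $\dot y = \lambda y$. On this test equation the internal-stage equations become $\bfy = e\, y_0 + h\lambda\,(\I_s\P_s^T\O)\,\bfy$, so $\bfy = (I - z\,\I_s\P_s^T\O)^{-1} e\, y_0$ with $z = h\lambda$, and the update is $y_1 = y_0 + z\,\omega^T\bfy$, giving the rational stability function $R(z) = 1 + z\,\omega^T (I - z\,\I_s\P_s^T\O)^{-1} e$, where $\omega = (\omega_1,\dots,\omega_k)^T$. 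Perfect $A$-stability means exactly $|R(z)| \le 1$ for $\Re z \le 0$ and $|R(z)| \ge 1$ for $\Re z \ge 0$, equivalently $|R(\iu y)| = 1$ for all real $y$ together with $R$ being analytic in $\CC^-$; the latter two conditions together force the region of absolute stability to coincide with $\CC^-$.

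The key structural step is to identify $R(z)$ with the $(s,s)$-diagonal Padé approximant to the exponential, i.e. to show $R(z) = P_s(z)/P_s(-z)$ for a polynomial $P_s$ of degree $s$. There are two natural routes. The first is to invoke Theorem~\ref{ordine}: since HBVM($k$,$s$) has order $p = 2s$, the stability function satisfies $R(z) = \e^z + \bigo(z^{2s+1})$; combined with the fact that, because $\sigma$ has degree $s$ and is built from the $s$-dimensional column spaces of $\I_s$ and $\P_s$, the matrix $\I_s\P_s^T\O$ has rank $s$, so $R(z)$ is a rational function whose numerator and denominator have degree at most $s$. A rational function of type $(s,s)$ matching $\e^z$ to order $2s$ is necessarily the diagonal Padé approximant, which is known to be $A$-acceptable with $|R(\iu y)| = 1$ on the imaginary axis (its numerator and denominator are conjugate under $z \mapsto -z$). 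The second route is more hands-on: using the $W$-transformation / orthonormal-basis machinery one can show directly that $\P_s^T\O\,\I_s = X_s$, the tridiagonal ``shifted-Legendre'' matrix with entries $\pm\xi_j = \pm(2\sqrt{(2j-1)(2j+1)})^{-1}$ on the off-diagonals and zero on the diagonal, and that $\P_s^T\O\,e = e_1$, $\omega^T\I_s = \ldots$; then the reduced $s\times s$ problem has stability function $1 + z\, e_1^T(I - z X_s)^{-1} e_1$, and the skew-symmetry of the off-diagonal part of $X_s$ makes the modulus on the imaginary axis identically one by a direct computation. Either way one also needs that $P_s(z)$ has no zeros in $\overline{\CC^-}$, i.e. all poles of $R$ lie in $\CC^+$, which again follows from the known properties of the diagonal Padé approximant.

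I would organize the write-up as follows. First, derive the stability function $R(z) = 1 + z\,\omega^T(I - z\,\I_s\P_s^T\O)^{-1}e$ from (\ref{rk0}). Second, observe that $\I_s\P_s^T\O = \I_s\,(\P_s^T\O)$ is a product through an $s$-dimensional space, so $R$ is of type $(s,s)$; combine with the order result (Theorem~\ref{ordine}) $R(z) - \e^z = \bigo(z^{2s+1})$ to conclude $R$ equals the $(s,s)$ Padé approximant $R_s(z)$ to the exponential. Third, quote the classical facts about $R_s$: its denominator has all zeros in $\CC^+$ (so $R$ is analytic in $\overline{\CC^-}$), and $|R_s(\iu y)| = 1$ for all real $y$ because numerator and denominator are related by $z \mapsto -\bar z$ on the imaginary axis. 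Fourth, conclude by the maximum modulus principle applied to $R$ on the closed left half-plane (bounded, analytic, modulus $1$ on the boundary) that $|R(z)| \le 1$ on $\CC^-$, and symmetrically $|R(z)| = |1/R_s(-z)|^{-1}\ge 1$ on $\CC^+$, so the stability region is exactly $\CC^-$: perfect $A$-stability.

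The main obstacle is pinning down that $R(z)$ is genuinely of type $(s,s)$ rather than $(k,k)$ and matches $\e^z$ to the full order $2s$. The order claim is already handed to us by Theorem~\ref{ordine}, so the real work is the rank/degree bookkeeping: one must show that although $\I_s\P_s^T\O$ is a $k\times k$ matrix, the resolvent $(I - z\I_s\P_s^T\O)^{-1}$ contracted between $\omega^T$ on the left and $e$ on the right produces a rational function with denominator of degree only $s$. This is exactly where Remark~\ref{ascisse} and the orthonormality (\ref{orto}) are essential: using $\P_s^T\O\I_s$ as the genuine $s\times s$ ``core'' matrix and the identities $\P_s^T\O e = e_1$ (from $\int_0^1 P_j = \delta_{j1}$ up to normalization) and $\omega^T\I_s = $ the first row of something simple, one reduces the $k$-dimensional resolvent to an $s$-dimensional one via the push-through identity $\omega^T(I - z\I_s\P_s^T\O)^{-1}\I_s = \omega^T\I_s(I - z\P_s^T\O\I_s)^{-1}$. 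Once that reduction is in place the rest is standard Padé theory, and I would simply cite it rather than reprove $A$-acceptability of the diagonal Padé family.
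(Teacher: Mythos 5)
Your route is genuinely different from the paper's. The paper does not touch the stability function at all: it observes that, on the test equation $f(y)=\lambda y$, the integrands in the Master Functional Equation (\ref{Lf})--(\ref{L}) are polynomials of degree at most $2s-1$, so the equation is satisfied by the collocation polynomial $\sigma_s$ of the Gauss--Legendre method of order $2s$; by uniqueness $\sigma=\sigma_s$, the method coincides with that Gauss method on linear problems, and perfect $A$-stability is inherited in one line. Your plan instead works with $R(z)=1+z\,\omega^T(I-z\,\I_s\P_s^T\O)^{-1}e$ and tries to identify it with the diagonal Pad\'e approximant. The conclusion is the same (HBVM$(k,s)$ reduces to Gauss on linear problems), but your first route has a genuine gap in the degree bookkeeping. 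Rank $s$ of $A=\I_s\P_s^T\O$ does bound the \emph{denominator} $\det(I-zA)$ by degree $s$, but the numerator is $\det\bigl(I-z(A-e\,\omega^T)\bigr)$ and $A-e\,\omega^T$ has rank up to $s+1$, so the numerator may have degree $s+1$ (already for explicit Euler, $A=0$ has rank $0$ yet $R(z)=1+z$). A rational function of type $(s+1,s)$ matching $\e^z$ to order $2s$ is \emph{not} unique --- the solutions form a one-parameter family, e.g.\ for $s=1$ it contains $(1-z^2/2)/(1-z)$, which is not $A$-acceptable --- so order $2s$ plus your rank count does not force $R$ to be the $(s,s)$ Pad\'e approximant. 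Some extra input is needed to cap the numerator degree at $s$.

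Your second route supplies exactly that missing input, and it is the one I would pursue: by the push-through identity, $R(z)=1+z\,\omega^Te+z^2(\omega^T\I_s)(I-z\,\P_s^T\O\,\I_s)^{-1}(\P_s^T\O\,e)$, and under $B(2s)$ each ingredient is computed exactly by the quadrature: $\P_s^T\O\,e=e_1$, $\omega^T\I_s$ is a fixed vector depending only on $s$, and $\P_s^T\O\,\I_s=X_s$ (this is Lemma~\ref{intleg} combined with $\P_s^T\O\P_{s+1}=(I_s~\boldsymbol{0})$, exactly as in the proof of Theorem~\ref{mainres}). Hence $R$ depends only on $s$ and not on $k$ or on the abscissae, so it coincides with the stability function of the $k=s$ case, i.e.\ of the Gauss method of order $2s$, and you can then simply quote its perfect $A$-stability rather than reprove $|R(\iu y)|=1$ from the skew-symmetric structure of $X_s$ (your displayed reduced formula $1+z\,e_1^T(I-zX_s)^{-1}e_1$ is not correct as written, and ``direct computation'' of the modulus on the imaginary axis is not trivial). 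With that repair the argument is sound; it is essentially a $W$-transformation proof, heavier than the paper's but with the side benefit of exhibiting the reduced $s\times s$ core $X_s$ explicitly, which the paper only exploits later in Chapters~4 and~5.
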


\begin{proof} As it has been previously observed, a HBVM$(k,s)$ is
fully characterized by the corresponding polynomial $\sigma$
which, for $k$ sufficiently large (i.e., assuming that
(\ref{discr_lin}) holds true), satisfies the {\em Master
Functional Equation} (\ref{Lf})--(\ref{L}), which is independent
of the choice of the nodes $c_1,\dots,c_s$ (since we consider an
orthonormal basis). When, in place of $f(y)=J\nabla H(y)$ we put the
test equation $f(y)=\lambda y$, we have that the collocation
polynomial of the Gauss-Legendre method of order $2s$, say
$\sigma_s$, satisfies the {\em Master Functional Equation}, since
the integrands appearing in it are polynomials of degree at most
$2s-1$, so that $\sigma=\sigma_s$. The proof completes by
considering that Gauss-Legendre methods are perfectly
$A$-stable.\QED
\end{proof}

\begin{ex}\label{lobex} As an example, for the methods studied in
\cite{brugnano09bit}, based on a Lobatto distribution of the nodes
$\{c_0=0,c_1,\dots,c_s\}\cup\{\hat{c}_1,\dots,\hat{c}_{k-s}\}$,
one has that $\deg(\sigma)=s$, so that the order of HBVM($k$,$s$)
turns out to be $2s$, with a quadrature satisfying $B(2k)$. Finally, we
observe that, with such choice of the abscissae HBVM$(s,s)$ reduces to
the Lobatto IIIA method of order $2s$.\end{ex}

\begin{ex}\label{gaussex} For the same reason, when one considers a Gauss
distribution for the abscissae
$\{c_1,\dots,c_s\}\cup\{\hat{c}_1,\dots,\hat{c}_{k-s}\}$, as done
in \cite{BIT09}, one also obtains a method of order $2s$ with a
quadrature satisfying $B(2k)$. Similarly as in the previous
example, HBVM$(s,s)$ now reduces to the Gauss-Legendre method of
order $2s$.
\end{ex}

\begin{rem}\label{symrem}
A number of remarks are in order, to emphasize relevant features
of HBVM$(k,s)$:
\begin{itemize}

\item From Remark~\ref{ascisse}, HBVM($k$,$s$) are {\em symmetric
methods} according to the {\em time reversal symmetry condition}
defined in \cite[p.\,218]{BT} (see also \cite{BT09}), provided
that the abscissae $\{\tau_i\}$ (see (\ref{tiyi})) are
symmetrically distributed ~\cite{brugnano09bit}.

\item By virtue of Theorems~\ref{ordine} and \ref{stab}, all
methods in Examples~\ref{lobex} and \ref{gaussex} are symmetric,
perfectly $A$-stable, and of order $2s$. In particular such
HBVM$(k,s)$ are exact for polynomial Hamiltonian functions of
degree $\nu$, provided that
\begin{equation}\label{knu}
k\ge \frac{\nu s}2.\end{equation}

\item For all $k$ sufficiently large so that (\ref{discr_lin}) holds,
HBVM$(k,s)$ based on the $k$ Gauss-Legendre abscissae in $[0,1]$
are {\em equivalent} to HBVM$(k,s)$ based on $k+1$ Lobatto
abscissae in $[0,1]$ (see \cite{BIT09}), since both methods define
the same polynomial $\sigma$ of degree $s$ (i.e., they satisfy the
same Master Functional Equation (\ref{L})--(\ref{Lf})).

\end{itemize}
\end{rem}

\chapter{Numerical Tests}\label{chap2}

We here collect a few numerical tests, in order to put into
evidence the potentialities of HBVMs
\cite{BIS1,brugnano09bit,BIT09}.

\section*{Test problem 1}  Let us consider the problem
characterized by the polynomial Hamiltonian (4.1) in \cite{Faou},
\begin{equation}\label{fhp}
H(p,q) = \frac{p^3}3 -\frac{p}2 +\frac{q^6}{30} +\frac{q^4}4
-\frac{q^3}3 +\frac{1}6,
\end{equation}

\no having degree $\nu=6$, starting at the initial point
$y_0\equiv (q(0),p(0))^T=(0,1)^T$, so that $H(y_0)=0$. For such a
problem, in \cite{Faou} it has been  experienced a numerical drift
in the discrete Hamiltonian, when using the fourth-order Lobatto
IIIA method with stepsize $h=0.16$, as confirmed by the plot in
Figure~\ref{faoufig0}. When using the fourth-order Gauss-Legendre
method the drift disappears, even though the Hamiltonian is not
exactly preserved along the discrete solution, as is confirmed by
the plot in Figure~\ref{faoufig}. On the other hand, by using the
fourth-order HBVM(6,2) with the same stepsize, the Hamiltonian
turns out to be preserved up to machine precision, as shown in
Figure~\ref{faoufig1}, since such method exactly preserves
polynomial Hamiltonians of degree up to 6. In such a case,
according to the last item in Remark~\ref{symrem}, the numerical
solutions obtained by using the Lobatto nodes
$\{c_0=0,c_1,\dots,c_6=1\}$ or the Gauss-Legendre nodes
$\{c_1,\dots,c_6\}$ are the same. The fourth-order convergence of
the method is numerically verified by the results listed in
Table~\ref{tp1}.

\begin{figure}[hp]
%\vspace{-1cm}
\centerline{\includegraphics[width=0.7\textwidth,height=6cm]{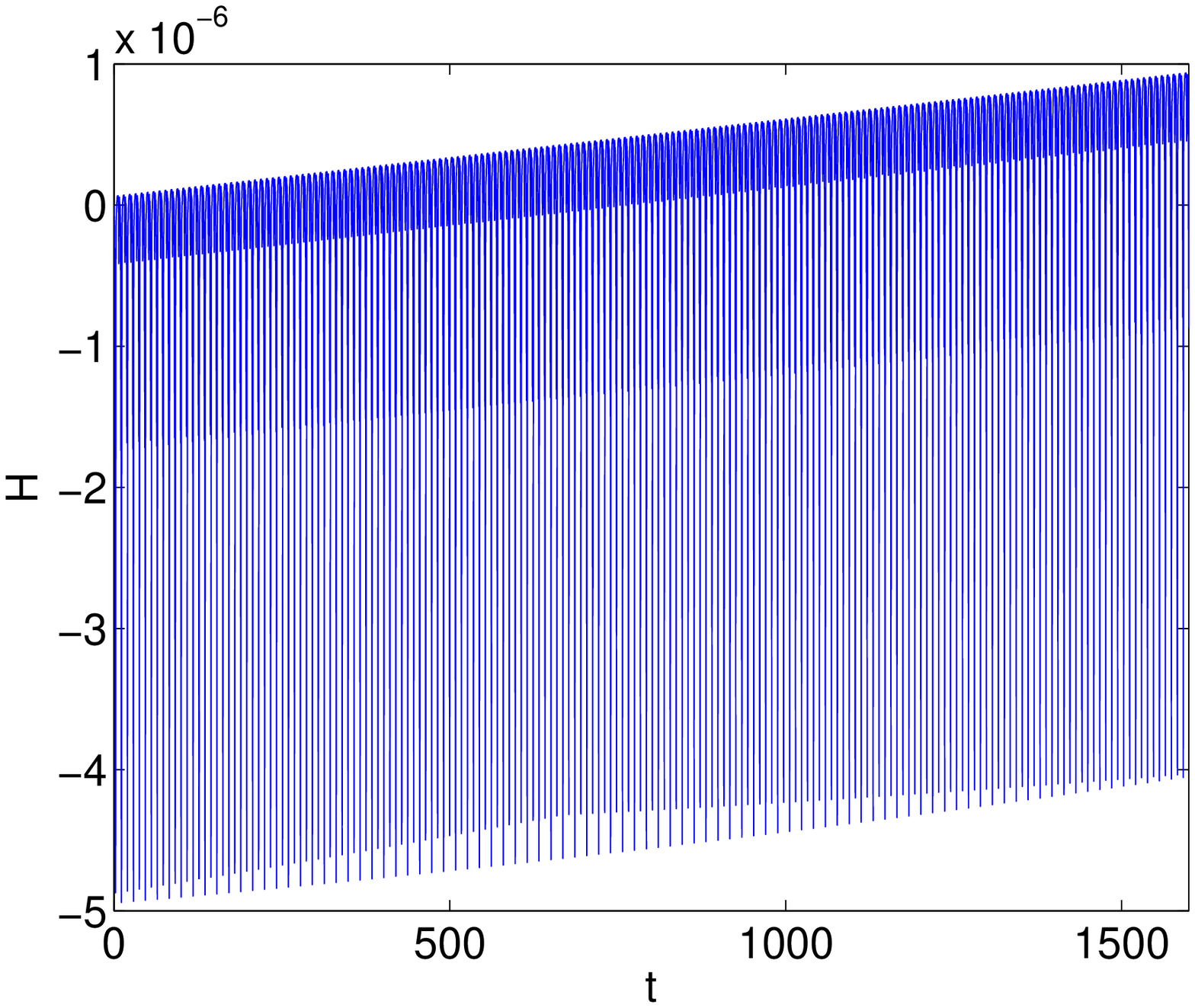}}
\caption{\protect\label{faoufig0} Fourth-order Lobatto IIIA
method, $h=0.16$, problem (\ref{fhp}): drift in the Hamiltonian.}

\centerline{\includegraphics[width=0.7\textwidth,height=6cm]{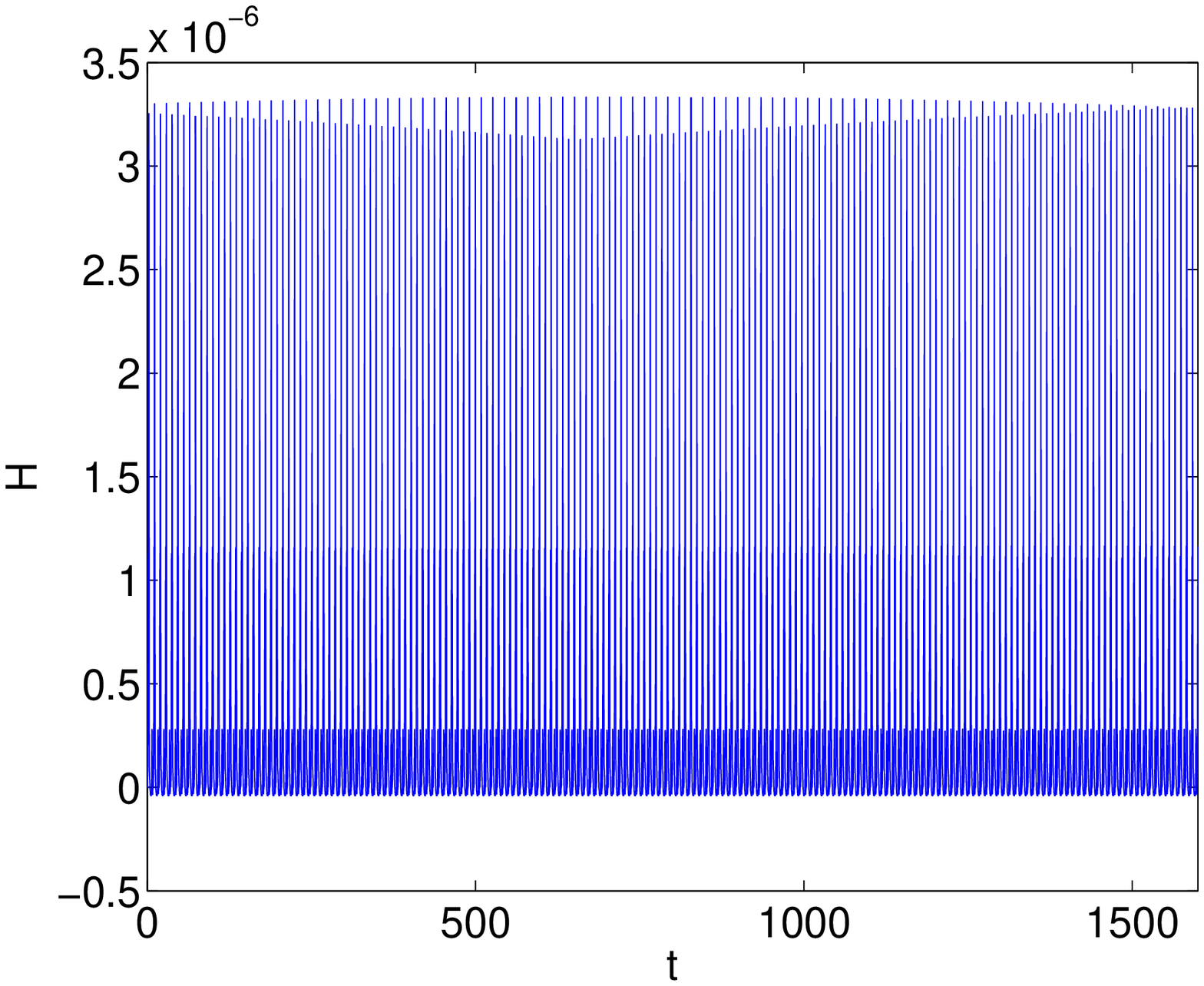}}
\caption{\protect\label{faoufig} Fourth-order Gauss-Legendre
method, $h=0.16$, problem (\ref{fhp}): $H\approx 10^{-6}$.}

\centerline{\includegraphics[width=0.7\textwidth,height=6cm]{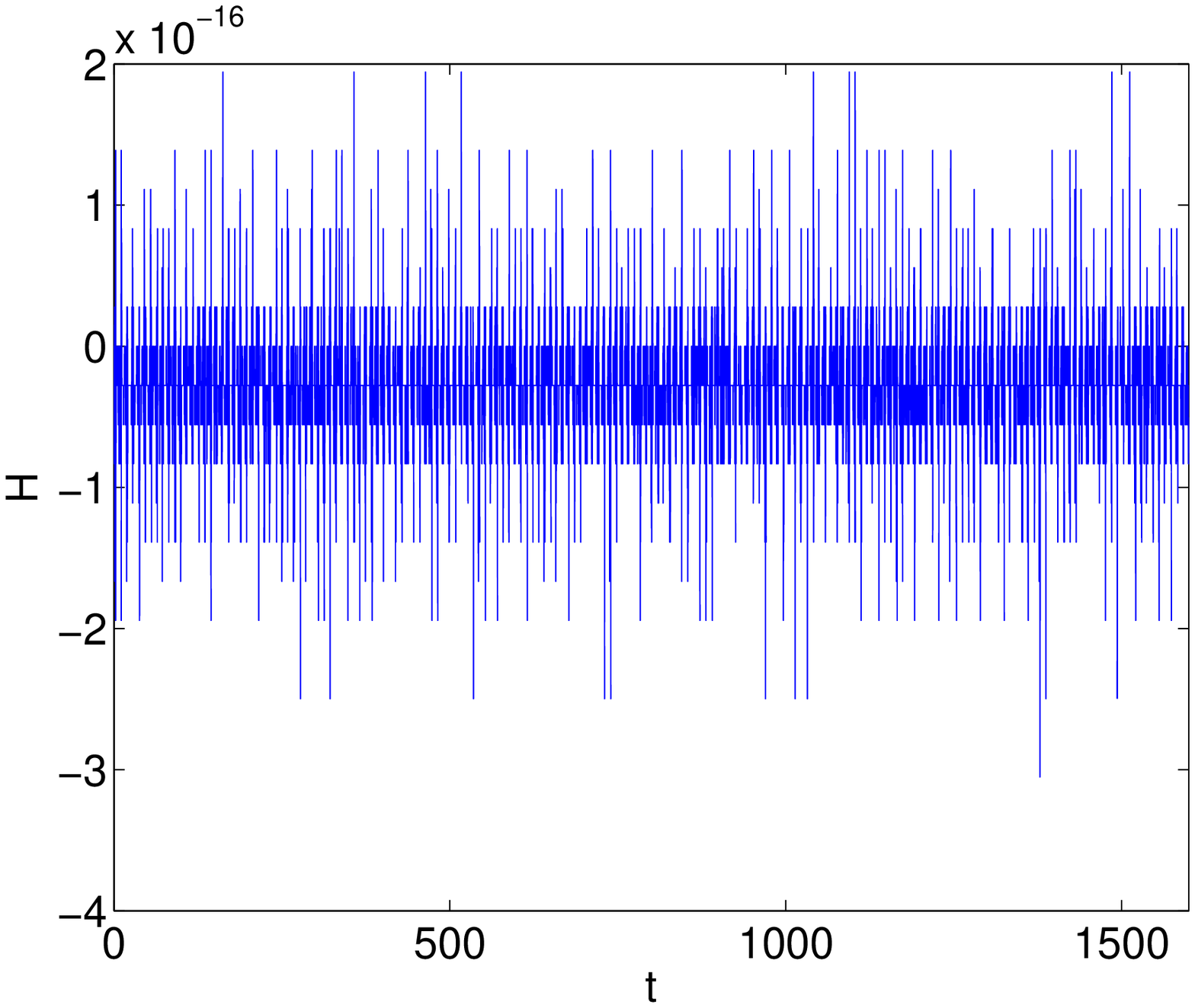}}
\caption{\protect\label{faoufig1} Fourth-order HBVM(6,2) method,
$h=0.16$, problem (\ref{fhp}): $H\approx 10^{-16}$.}
\end{figure}

\newpage
\section*{Test problem 2} The second test problem, having a
highly oscillating solution, is the Fermi-Pasta-Ulam problem (see
\cite[Section\,I.5.1]{hairer06gni}), modelling a chain of 2$m$ mass points
connected with alternating soft nonlinear and stiff linear springs, and fixed
at the end points. The variables $q_1, . . . , q_{2m}$ stand for the
displacements of the mass points, and $p_i = \dot q_i$ for their velocities. The
corresponding Hamiltonian, representing the total energy, is
\begin{equation}\label{fpu}
H(p,q) = \frac{1}2\sum_{i=1}^m\left(p_{2i-1}^2+p_{2i}^2\right)
+\frac{\omega^2}4\sum_{i=1}^m\left(q_{2i}-q_{2i-1}\right)^2
+\sum_{i=0}^m\left(q_{2i+1}-q_{2i}\right)^4,
\end{equation}

\no with $q_0=q_{2m+1}=0$. In our simulation we have used the following values:
$m=3$, $\omega=50$, and starting vector $$p_i=0, \quad q_i = (i-1)/10, \qquad
i=1,\dots,6.$$

\no In such a case, the Hamiltonian function is a polynomial of
degree 4, so that the fourth-order HBVM(4,2) method, either when
using the Lobatto nodes or the Gauss-Legendre nodes, is able to
exactly preserve the Hamiltonian, as confirmed by the plot in
Figure~\ref{fpufig1}, obtained with stepsize $h=0.05$. Conversely,
by using the same stepsize, both the fourth-order Lobatto IIIA and
Gauss-Legendre methods provide only an approximate conservation of
the Hamiltonian, as shown in the plots in Figures~\ref{fpufig0}
and \ref{fpufig}, respectively. The fourth-order convergence of
the HBVM(4,2) method is numerically verified by the results listed
in Table~\ref{tp2}.

\begin{figure}[hp]
\centerline{\includegraphics[width=0.7\textwidth,height=6cm]{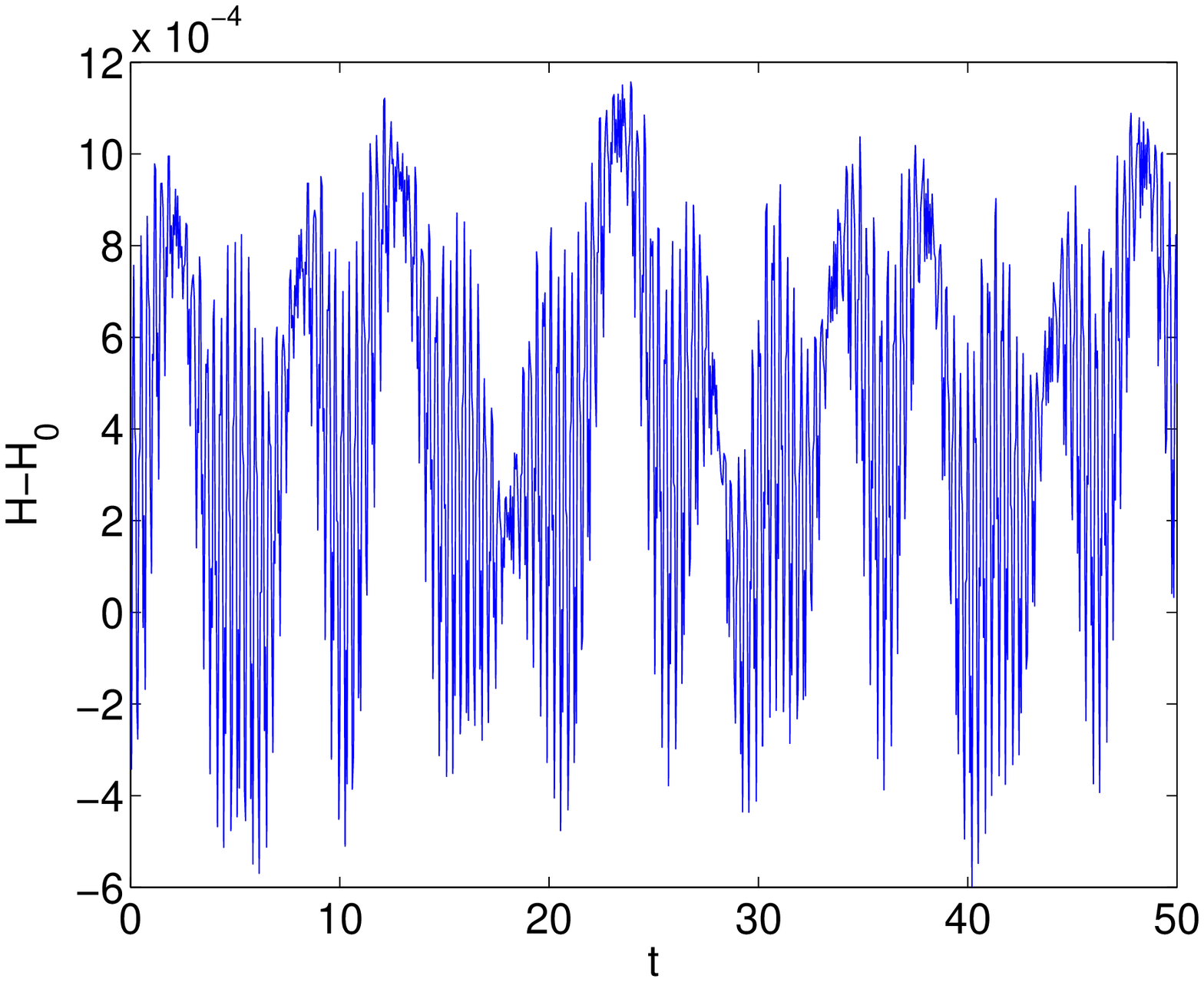}}
\caption{\protect\label{fpufig0} Fourth-order Lobatto IIIA method,
$h=0.05$, problem (\ref{fpu}): $|H-H_0|\approx 10^{-3}$.}

\centerline{\includegraphics[width=0.7\textwidth,height=6cm]{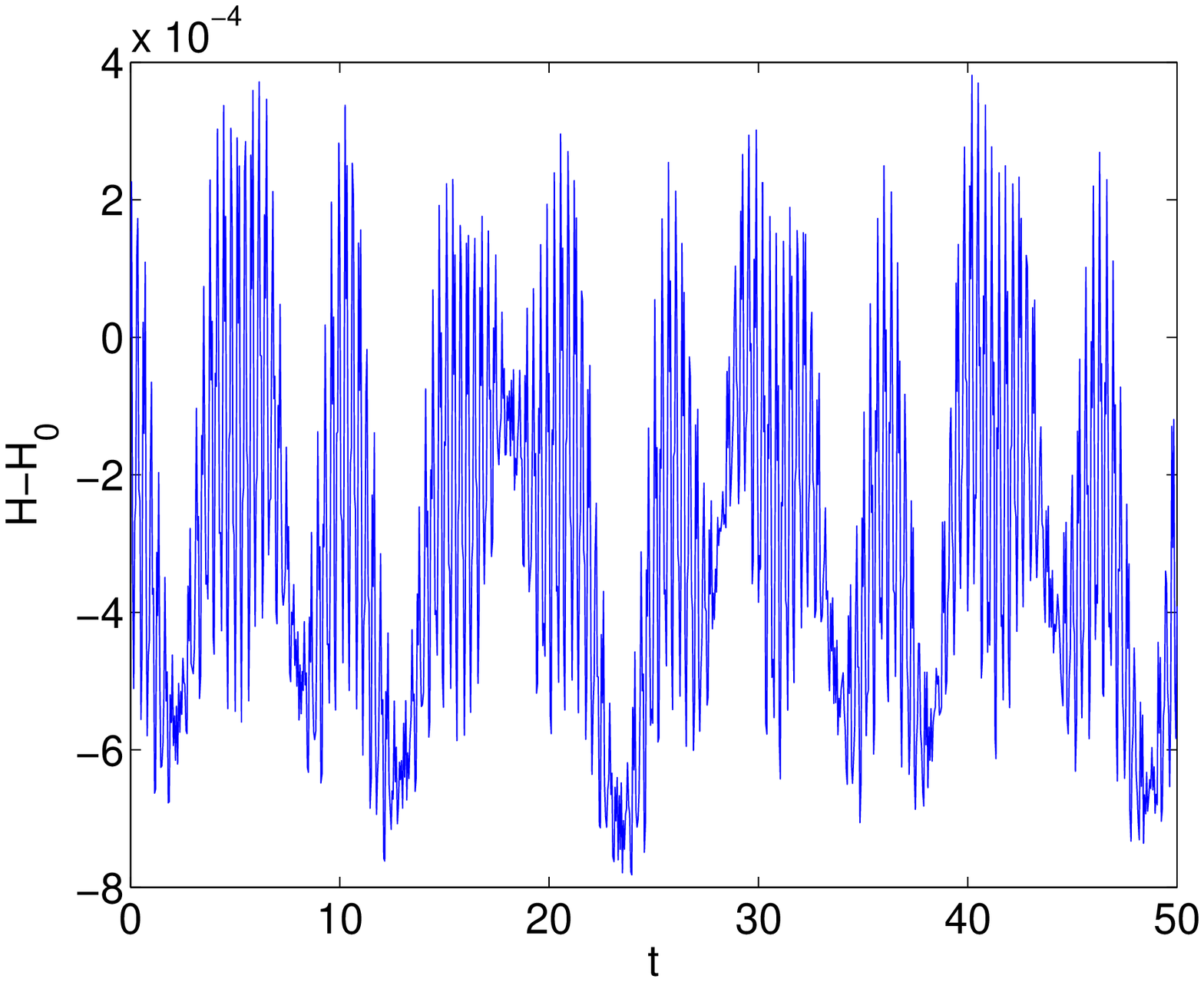}}
\caption{\protect\label{fpufig} Fourth-order Gauss-Legendre
method, $h=0.05$, problem (\ref{fpu}): $|H-H_0|\approx 10^{-3}$.}

\centerline{\includegraphics[width=0.7\textwidth,height=6cm]{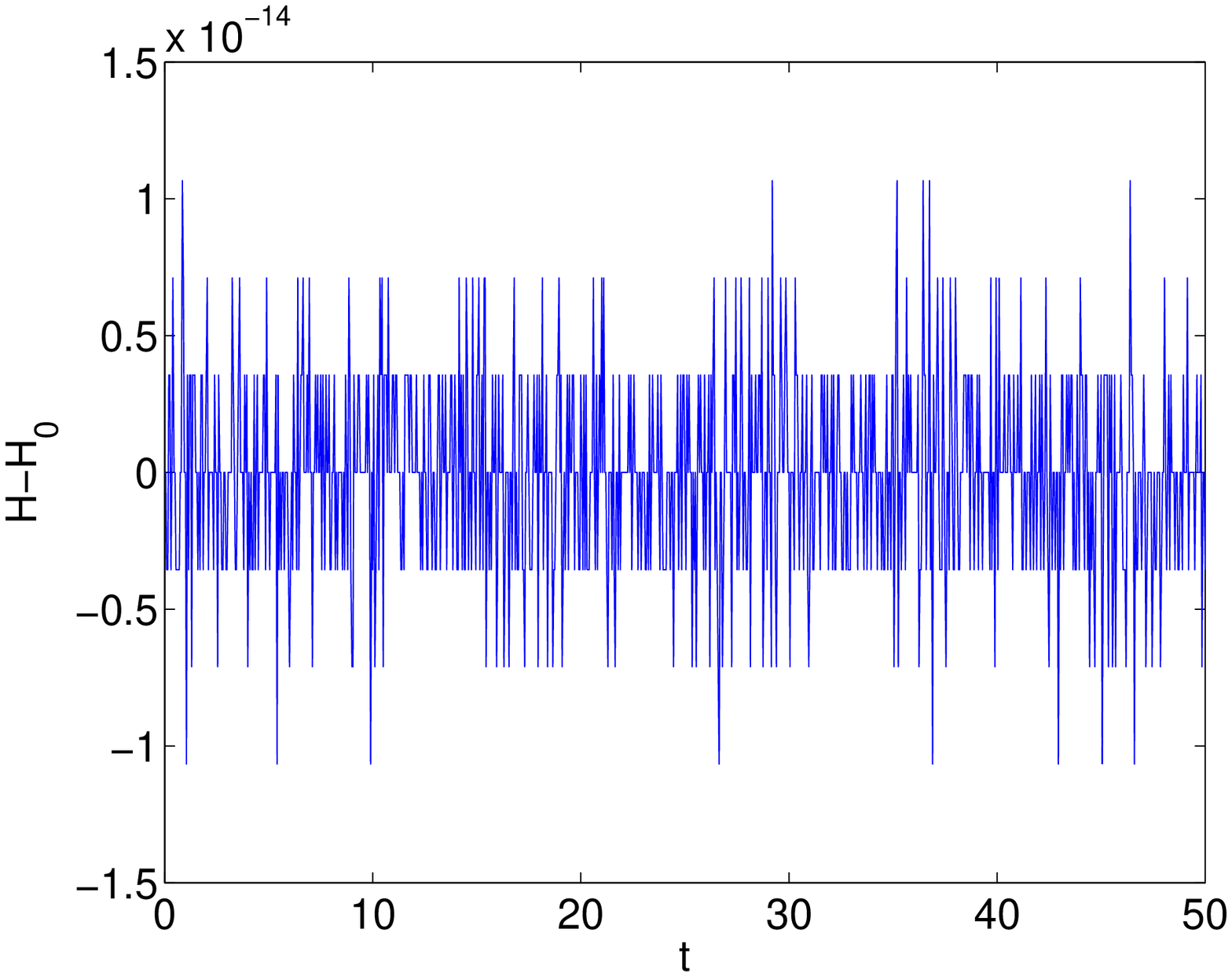}}
\caption{\protect\label{fpufig1} Fourth-order HBVM(4,2) method,
$h=0.05$, problem (\ref{fpu}): $|H-H_0|\approx 10^{-14}$.}
\end{figure}

\newpage

\section*{Test problem 3 (non-polynomial Hamiltonian)} In the
previous examples, the Hamiltonian function was a polynomial.
Nevertheless, as observed above, also in this case HBVM($k$,$s$)
are expected to produce  a {\em practical} conservation of the
energy when applied to systems defined by a non-polynomial
Hamiltonian function that can be locally well approximated by a
polynomial. As an example, we consider the motion of a charged
particle in a magnetic field with Biot-Savart
potential.\footnote{\,This kind of motion causes the well known
phenomenon of {\em aurora borealis}.} It is defined by the
Hamiltonian \cite{brugnano09bit}
\begin{eqnarray}\label{biot}
\lefteqn{H(x,y,z,\dot{x},\dot{y},\dot{z}) = }\\&&\frac{1}{2m}
\left[ \left(\dot{x}-\aa\frac{x}{\varrho^2}\right)^2 +
\left(\dot{y}-\aa\frac{y}{\varrho^2}\right)^2 +
\left(\dot{z}+\aa\log(\varrho)\right)^2\right],\nonumber
\end{eqnarray}

\no with $\varrho=\sqrt{x^2+y^2}$, $\aa= e \,B_0$,  $m$ is the
particle mass, $e$ is its charge, and $B_0$ is the magnetic field
intensity. We have used the values $$m=1, \qquad e=-1, \qquad
B_0=1,$$with starting point
$$x = 0.5, \quad y = 10, \quad z = 0, \quad
\dot{x} =  -0.1, \quad \dot{y} = -0.3, \quad \dot{z} = 0.$$

\no By using the fourth-order Lobatto IIIA method, with stepsize
$h=0.1$, a drift is again experienced in the numerical solution,
as is shown in Figure~\ref{biotfig0}. By using the fourth-order
Gauss-Legendre method with the same stepsize, the drift disappears
even though, as shown in Figure~\ref{biotfig1}, the value of the
Hamiltonian is preserved within an error of the order of
$10^{-3}$. On the other hand, when using the HBVM(6,2) method with
the same stepsize, the error in the Hamiltonian decreases to an
order of $10^{-15}$ (see Figure~\ref{biotfig2}), thus giving a
practical conservation. Finally, in Table~\ref{tab1} we list the
maximum absolute difference between the numerical solutions over
$10^3$ integration steps, computed by the HBVM$(k,2)$ methods
based on Lobatto abscissae and on Gauss-Legendre abscissae, as $k$
grows, with stepsize $h=0.1$. We observe that the difference tends
to 0, as $k$ increases. Finally, also in this case, one verifies a
fourth-order convergence, as the results listed in Table~\ref{tp3}
show.

\begin{figure}[hp]
\centerline{\includegraphics[width=0.7\textwidth,height=6cm]{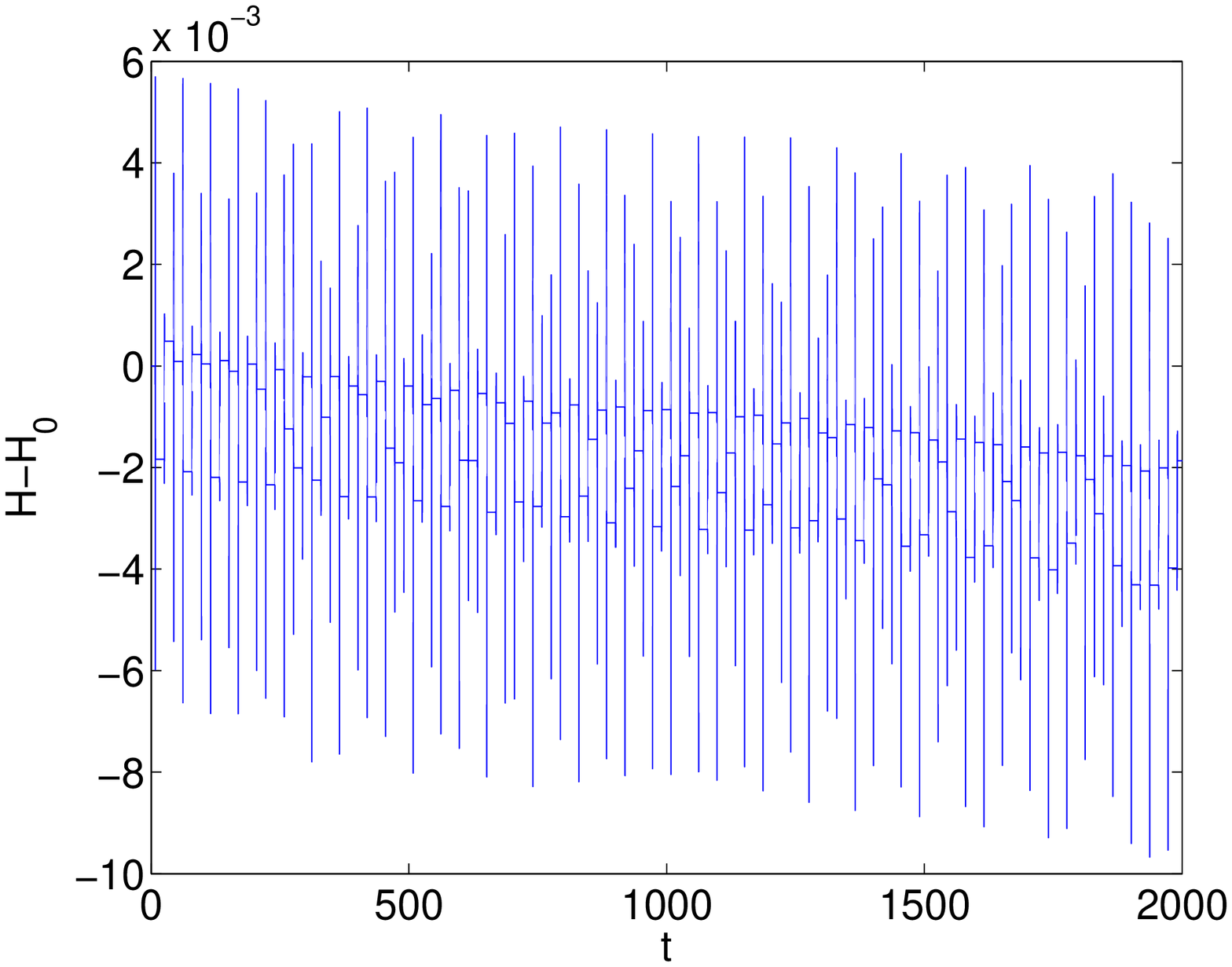}}
\caption{\protect\label{biotfig0} Fourth-order Lobatto IIIA
method, $h=0.1$, problem (\ref{biot}): drift in the Hamiltonian.}

\centerline{\includegraphics[width=0.7\textwidth,height=6cm]{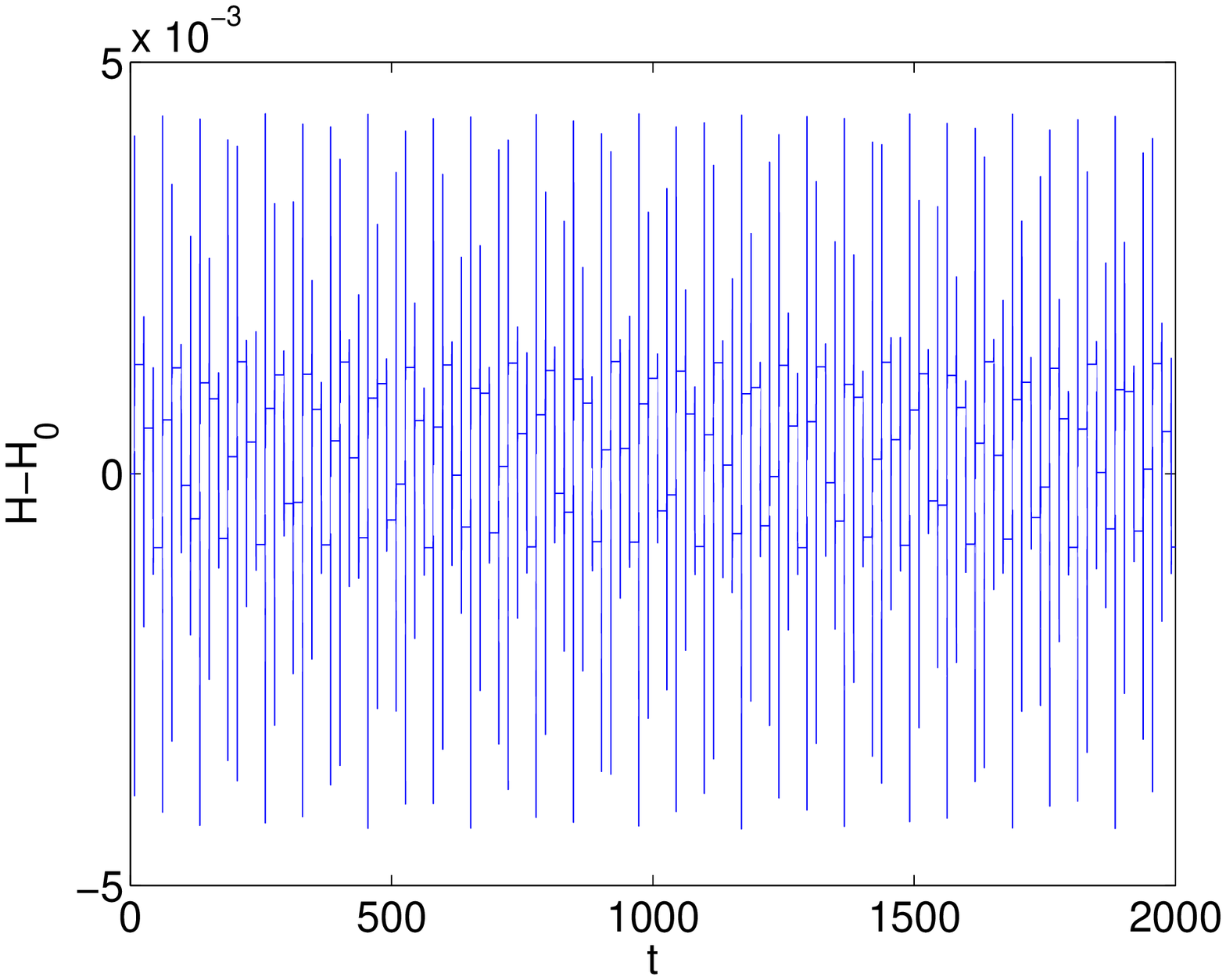}}
\caption{\protect\label{biotfig1} Fourth-order Gauss-Legendre
method, $h=0.1$, problem (\ref{biot}): $|H-H_0|\approx
10^{-3}$.}

\centerline{\includegraphics[width=0.7\textwidth,height=6cm]{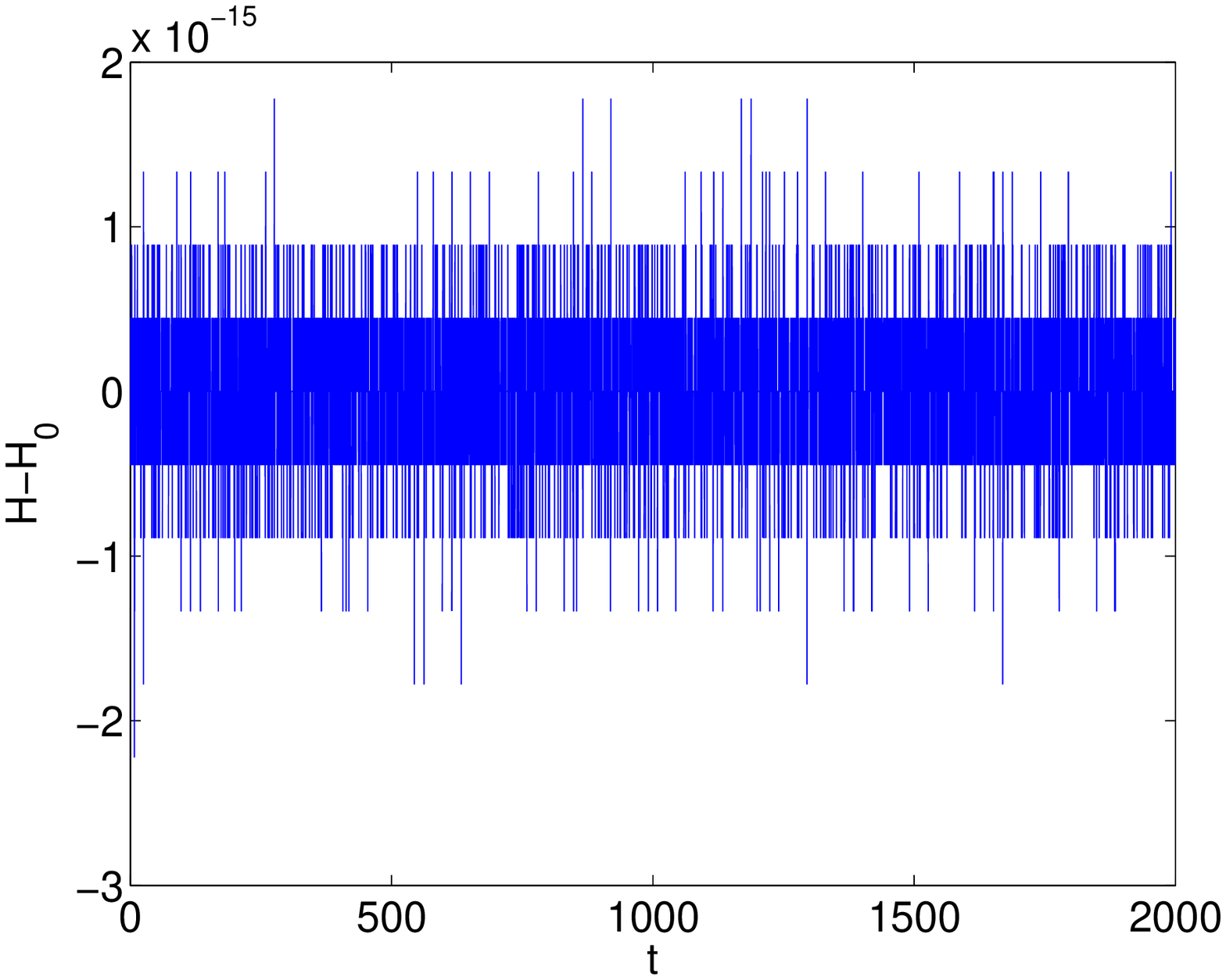}}
\caption{\protect\label{biotfig2} Fourth-order HBVM(6,2) method,
$h=0.1$, problem (\ref{biot}): $|H-H_0|\approx 10^{-15}$.}
\end{figure}

\begin{table}[hp]
\caption{\protect\label{tp1} Numerical order of convergence for
the HBVM(6,2) method, problem (\ref{fhp}).}
\begin{tabular}{|c|lllll|} \hline
$h$  & 0.32 & 0.16 & 0.08 & 0.04 & 0.02 \\
\hline
error  &$2.288\cdot10^{-2}$ &$1.487\cdot10^{-3}$ &$9.398\cdot10^{-5}$
&$5.890\cdot10^{-6}$ &$3.684\cdot10^{-7}$\\
\hline
order  & -- & 3.94 &3.98 &4.00 &4.00\\
\hline
\end{tabular}
\bigskip

\caption{\protect\label{tp2} Numerical order of convergence for
the HBVM(4,2) method, problem (\ref{fpu}).}
\begin{tabular}{|c|lllll|} \hline
$h$  & $1.6\cdot10^{-2}$ &$8\cdot10^{-3}$ & $4\cdot10^{-3}$ & $2\cdot10^{-3}$ &
$10^{-3}$ \\
\hline
error  & $3.030$ &$1.967\cdot10^{-1}$ &$1.240\cdot10^{-2}$ &$7.761\cdot10^{-4}$
&$4.853\cdot10^{-5}$ \\
\hline
order  & -- & 3.97& 3.99 &4.00 &4.00\\
\hline
\end{tabular}
\bigskip

\caption{\protect\label{tp3} Numerical order of convergence for
the HBVM(6,2) method, problem (\ref{biot}).}
\begin{tabular}{|c|lllll|} \hline
$h$  & $3.2\cdot10^{-2}$ &$1.6\cdot10^{-2}$ &$8\cdot10^{-3}$ & $4\cdot10^{-3}$ &
$2\cdot10^{-3}$ \\
\hline error  & $3.944\cdot10^{-6}$ &$2.635\cdot10^{-7}$
&$1.729\cdot10^{-8}$
&$1.094\cdot10^{-9}$ &$6.838\cdot10^{-11}$ \\
\hline
order  & -- & 3.90& 3.93 &3.98 &4.00\\
\hline
\end{tabular}
\bigskip

\caption{\protect\label{tab1} Maximum difference between the
numerical solutions obtained through the fourth-order HBVM$(k,2)$
methods based on Lobatto abscissae and Gauss-Legendre abscissae
for increasing values of $k$, problem (\ref{biot}), $10^3$ steps
with stepsize $h=0.1$.}\medskip \centerline{\begin{tabular}{|r|l|}
\hline $k$  &  $h=0.1$ \\
\hline
2  & $3.97 \cdot 10^{-1}$   \\
4  & $2.29 \cdot 10^{-3}$\\
6  & $2.01 \cdot 10^{-8}$\\
8  & $1.37 \cdot 10^{-11}$\\
10 & $5.88 \cdot 10^{-13}$\\
\hline
\end{tabular}}
\end{table}

\newpage

\section*{Test problem 4 (Sitnikov problem)}

The main problem in Celestial Mechanics is the so called $N$-body
problem, i.e. to describe the motion of $N$ point particles of
positive mass moving under Newton's law of gravitation when we
know their positions and velocities at a given time. This problem
is described by the Hamiltonian function:
\begin{equation}
\label{kepler} H(\bfq,\bfp)=\frac{1}{2} \sum_{i=1}^N
\frac{||p_i||_2^2}{m_i} - G \sum_{i=1}^N m_i
\sum_{j=1}^{i-1}\frac{m_j}{||q_i-q_j||_2},
\end{equation}

\no where $q_i$ is the position of the $i$th particle, with mass
$m_i$, and $p_i$ is its momentum.

The Sitnikov problem is a particular configuration of the $3$-body
dynamics (see, e.g., \cite{James}). In this problem two bodies of
equal mass (primaries) revolve about their center of mass, here
assumed at the origin, in elliptic orbits in the $xy$-plane. A
third, and much smaller body (planetoid), is placed on the
$z$-axis with initial velocity parallel to this axis as well.

The third body is small enough that the two body dynamics of the
primaries is not destroyed. Then, the motion of the third body will
be restricted to the $z$-axis and oscillating around the origin
but not necessarily periodic. In fact this problem has been shown
to exhibit a chaotic behavior  when the eccentricity of the orbits
of the primaries exceeds a critical value that, for the data set
we have used, is $\bar e \simeq 0.725$ (see Figure
\ref{sit_fig1}).

\begin{figure}[ht]
\begin{center}
\includegraphics[width=.9\textwidth,height=8cm]{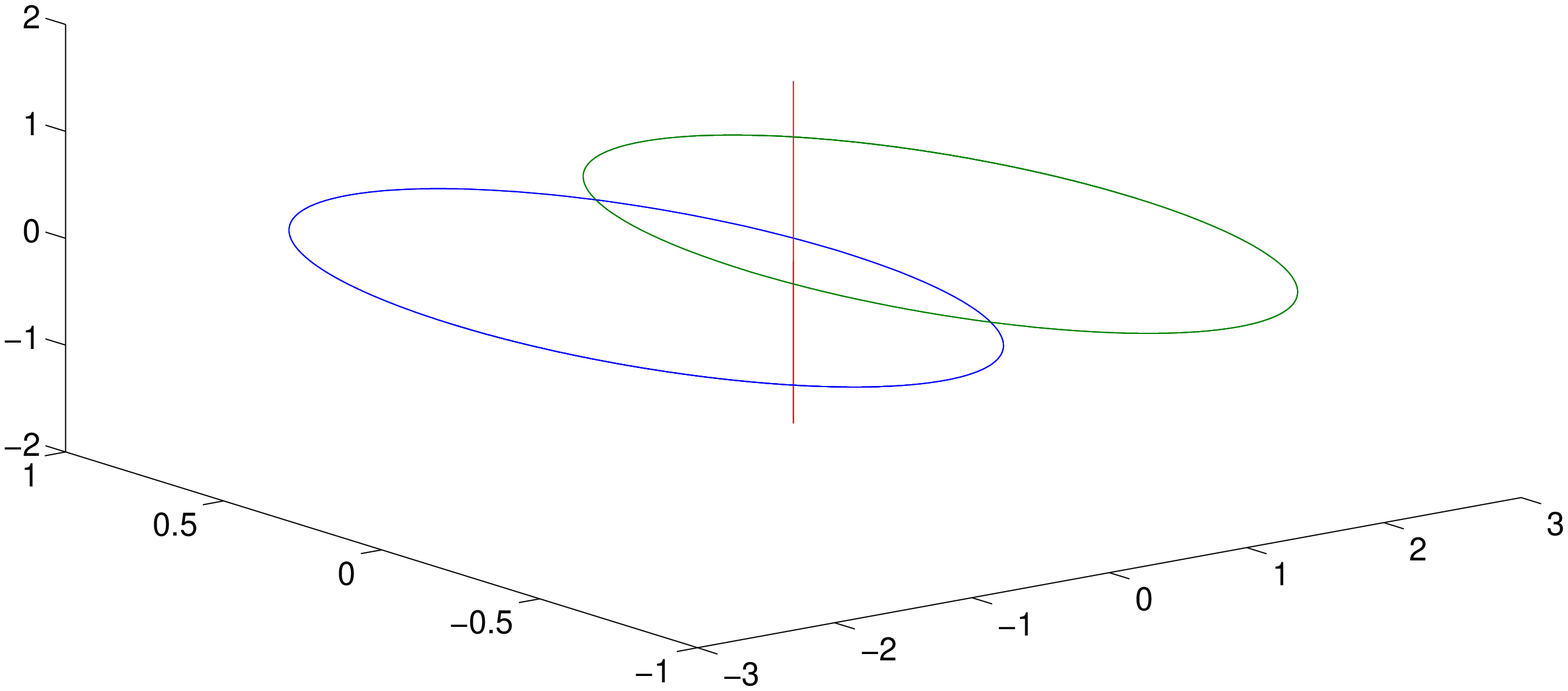}\\

\bigskip
\bigskip
\includegraphics[width=.9\textwidth,height=8cm]{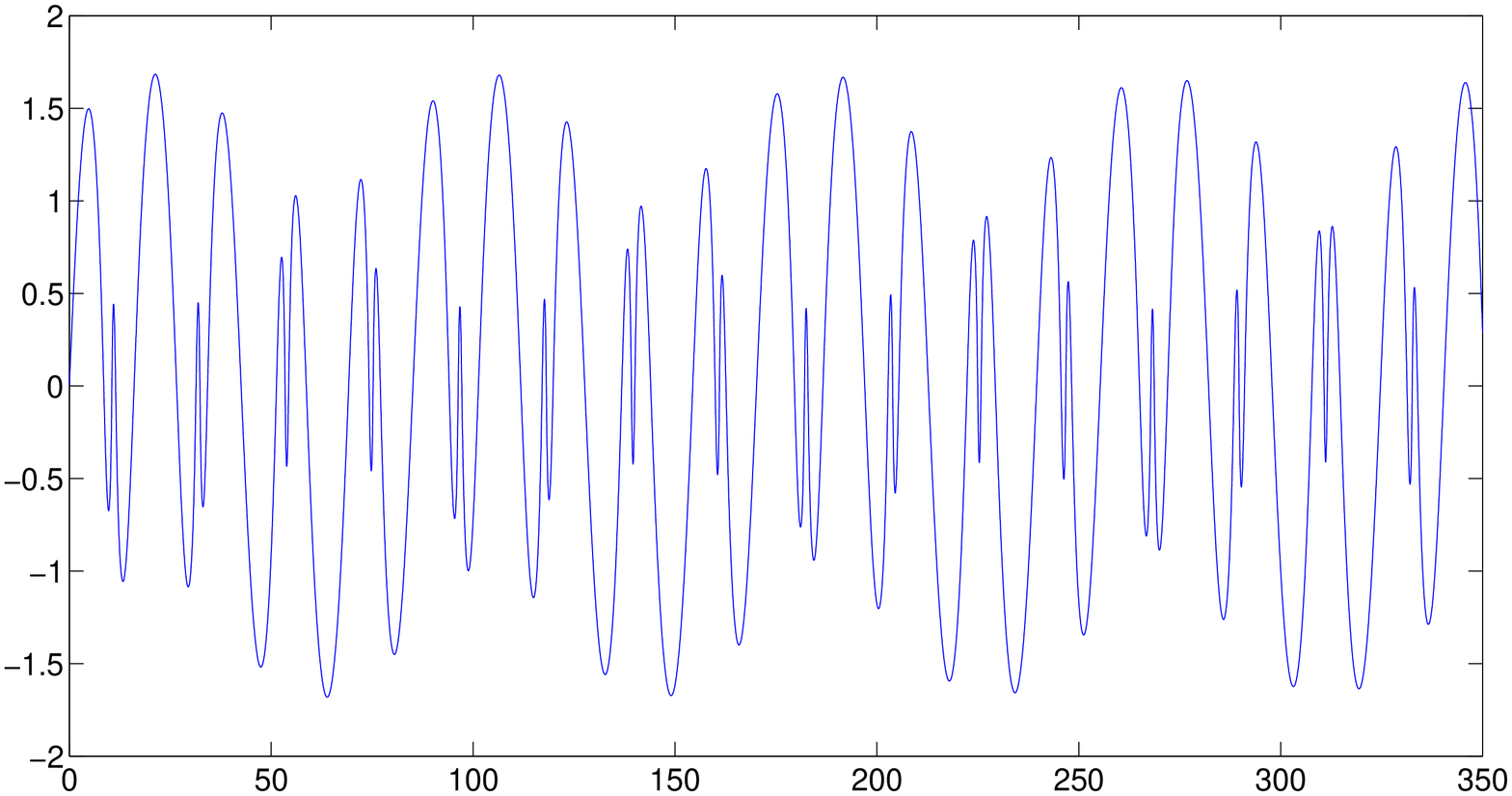}
\end{center}  \caption{The upper picture displays the configuration of
$3$-bodies in the Sitnikov problem. To an eccentricity of the
orbits of the primaries $e=0.75$, there correspond bounded chaotic
oscillations of the planetoid as is argued by looking at the
space-time diagram in the down picture.} \label{sit_fig1}
\end{figure}

We have solved the problem defined by the Hamiltonian function
\eqref{kepler} by the Gauss method of order 4 (i.e., HBVM(2,2) at
2 Gaussian nodes) and by HBVM(18,2) at 18 Gaussian nodes (order 4,
$2$ fundamental and $16$ silent stages), with the following set of
parameters in (\ref{kepler}):

\begin{center}
\begin{tabular}{ccccccccc}
$N$ & $G$ & $m_1$ & $m_2$ & $m_3$ & $e$ & $d$ & $h$ &
$t_{\mbox{max}}$
\\[.1cm]
\hline \\[-.4cm] $3$ & $1$ & $1$ & $1$ & $10^{-5}$ & $0.75$ & $5$ & $0.5$ &
$1500$
\end{tabular}
\end{center}

\no where $e$ is the eccentricity, $d$ is the distance of the
apocentres of the primaries (points at which the two bodies are
the furthest), $h$ is the used time-step, and
$[0,\,t_{\mbox{max}}]$ is the time integration interval. The
eccentricity $e$ and the distance $d$ may be used to define the
initial condition $[\bfq_0,\bfp_0]$ (see \cite{James} for the
details):
$$
\begin{array}{l}
\bfq_0 = [-\frac{5}{2},~  0,~  0,~  \frac{5}{2},~  0,~  0,~   0,~
0,~10^{-9}]^T,\\[.1cm]
\bfp_0 =[0,~ -\frac{1}{20}\sqrt{10},~  0,~    0,~
\frac{1}{20}\sqrt{10},~ 0,~ 0,~ 0,~ \frac{1}{2}]^T.
\end{array}
$$

First of all, we consider the two pictures in Figure
\ref{sit_fig5} reporting the relative errors in the  Hamiltonian
function and in the angular momentum evaluated along the numerical
solutions computed by the two methods. We know  that the
HBVM(18,2) precisely conserves Hamiltonian polynomial functions of
degree at most $18$. This accuracy is high enough to guarantee
that the nonlinear Hamiltonian function \eqref{kepler} is as well
conserved up to the machine precision (see the upper picture):
from a geometrical point of view this means that a local
approximation of the level curves of \eqref{kepler} by a
polynomial of degree $18$ leads to a negligible error. The Gauss
method exhibits a certain error in the Hamiltonian function while,
being this formula symplectic, it precisely conserves the angular
momentum, as is confirmed by looking at the down picture of Figure
\ref{sit_fig5}. The error in the numerical angular momentum
associated with the HBVM(18,2) undergoes some bounded
periodic-like oscillations.

\begin{figure}[ht]
\begin{center}
\includegraphics[width=.9\textwidth,height=8cm]{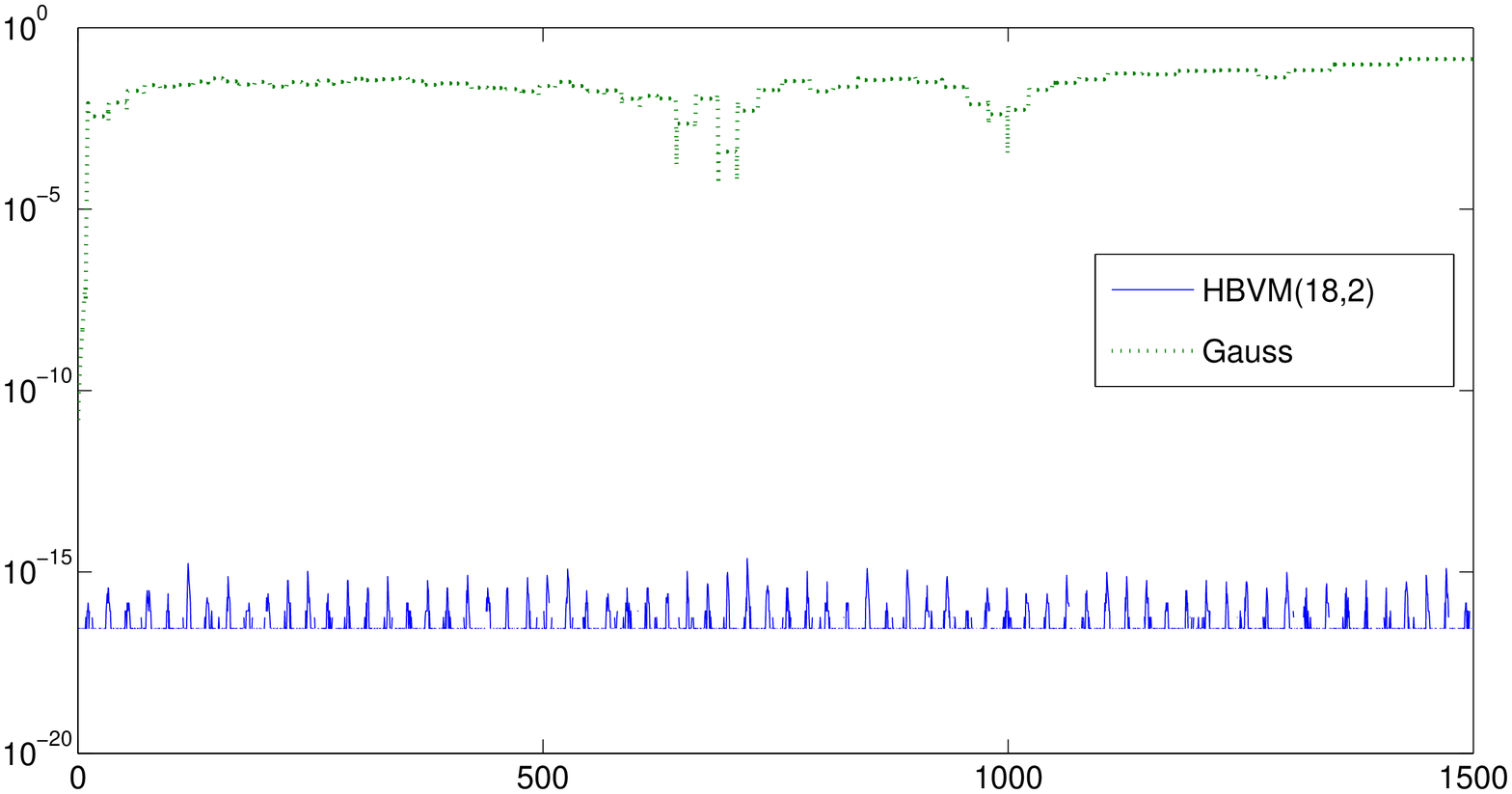}
\\

\bigskip\bigskip
\includegraphics[width=.9\textwidth,height=8cm]{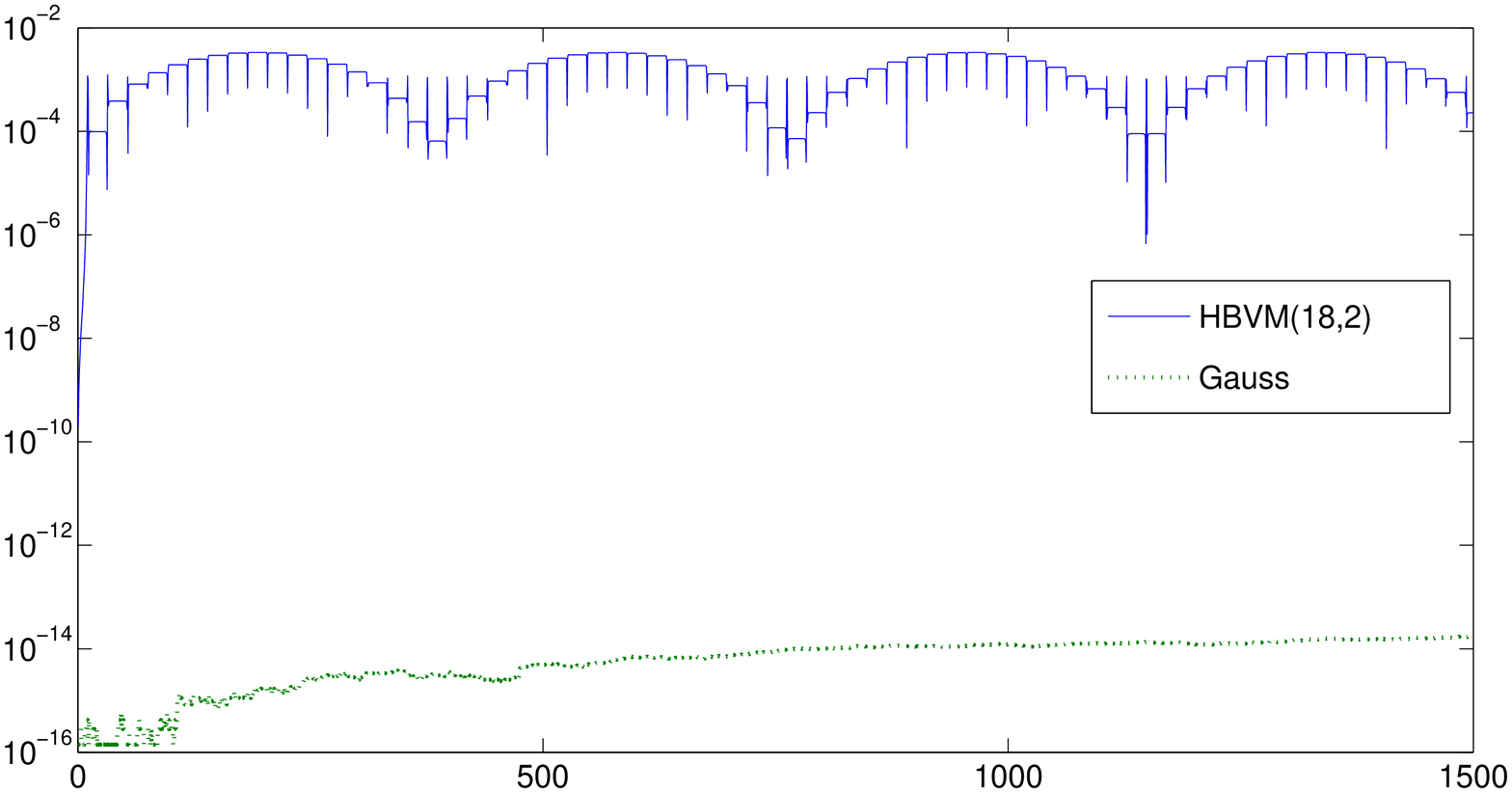}
\end{center} \caption{Upper picture: relative error $|H(y_n)-H(y_0)|/|H(y_0)|$
of the Hamiltonian function evaluated along the numerical solution
of the HBVM($18$,$2$) and the Gauss method. Down picture: relative
error $|M(y_n)-M(y_0)|/|M(y_0)|$ of the angular momentum evaluated
along the numerical solution of the HBVM($18$,$2$) and the Gauss
method.} \label{sit_fig5}
\end{figure}

Figures \ref{sit_fig2} and \ref{sit_fig3} show the numerical
solution computed by the Gauss method and HBVM(18,2), respectively.
Since the methods leave the $xy$-plane invariant for the motion of
the primaries and the $z$-axis invariant for the motion of the
planetoid, we have just reported the motion of the primaries in
the $xy$-phase plane (upper pictures) and the space-time diagram
of the planetoid (down picture).

\begin{figure}[ht]
\begin{center}
\includegraphics[width=.9\textwidth,height=8cm]{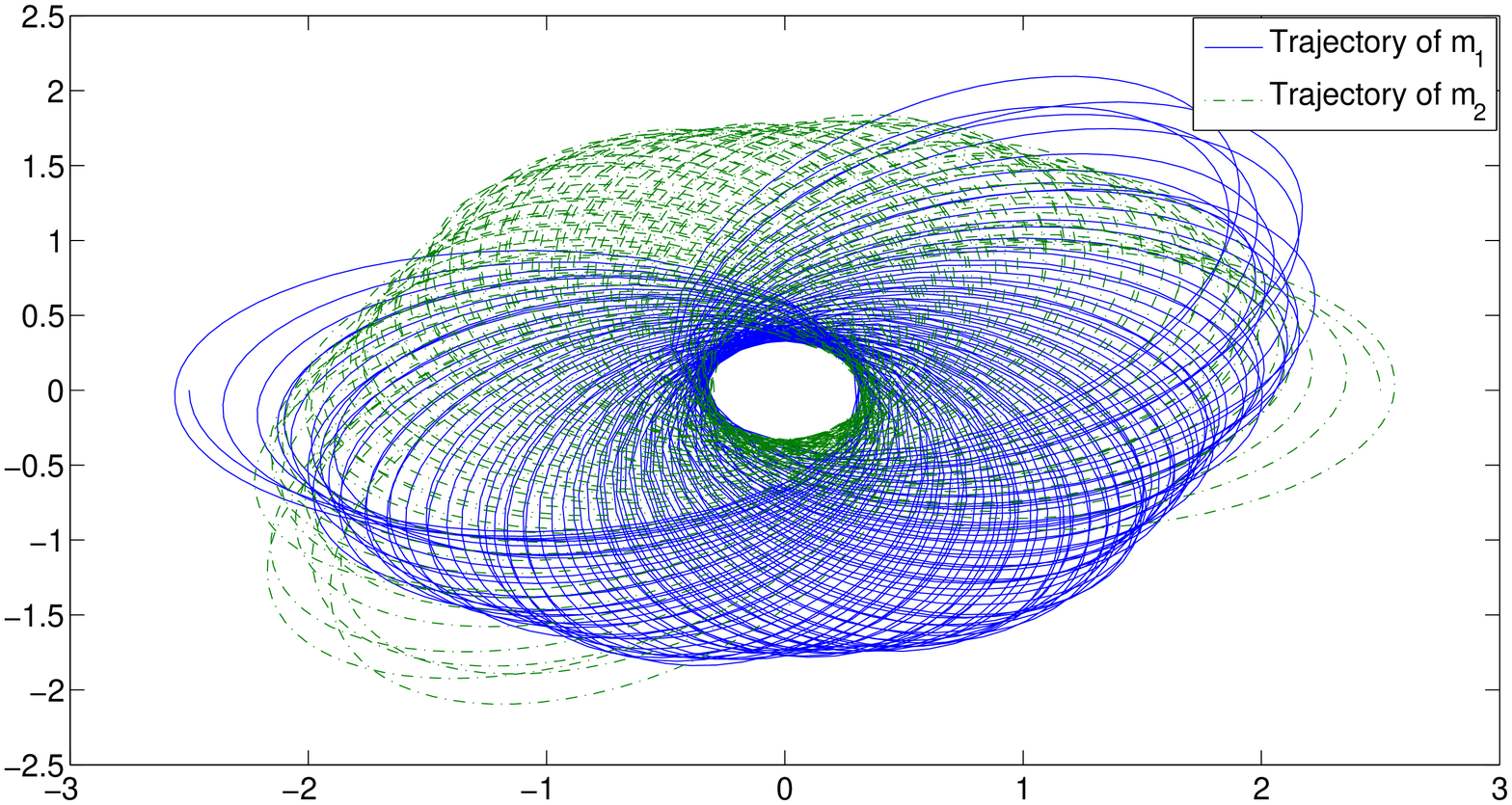}
\\

\bigskip\bigskip
\includegraphics[width=.9\textwidth,height=8cm]{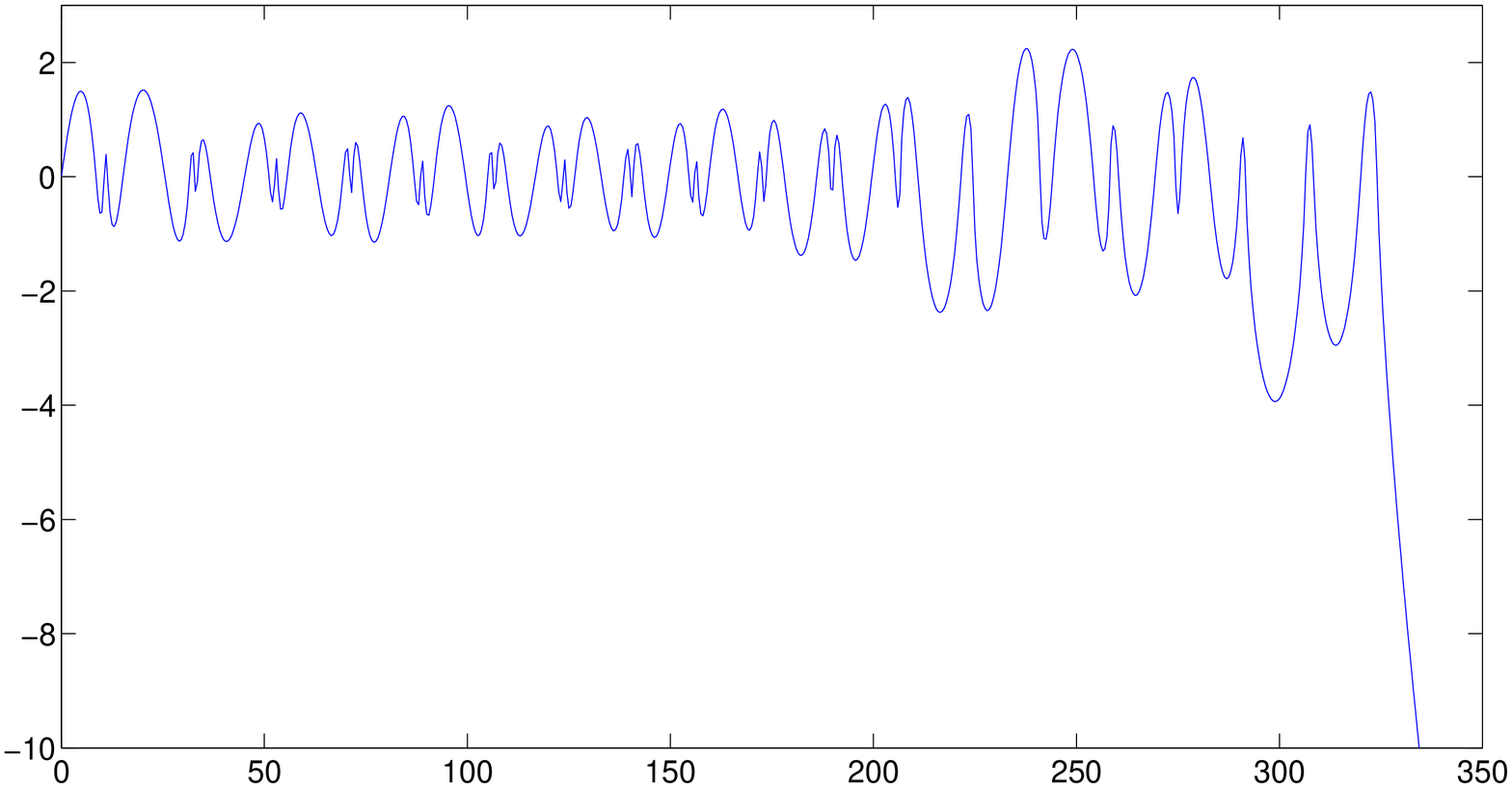}
\end{center}
\caption{The Sitnikov problem solved by the Gauss method of order
4, with stepsize $h=0.5$, in the time interval $[0,1500]$. The
trajectories of the primaries in the $xy$-plane (upper picture)
exhibit a very irregular behavior which causes the planetoid to
eventually escape the system, as illustrated by the space-time
diagram in the down picture.} \label{sit_fig2}
\end{figure}

We observe that, for the Gauss method, the orbits of the primaries
are irregular in character so that the third body, after
performing some oscillations around the origin, will eventually
escape the system (see the down picture of Figure
\ref{sit_fig2}). On the contrary (see the upper picture of Figure
\ref{sit_fig3}), the HBVM(18,2) method generates a quite regular phase
portrait. Due to the large stepsize $h$ used, a sham rotation of
the $xy$-plane appears which, however, does not destroy the global
symmetry of the dynamics, as testified by the bounded oscillations
of the planetoid (down picture of Figure \ref{sit_fig3}) which
look very similar to the reference ones in Figure \ref{sit_fig1}.
This aspect is also confirmed by the pictures in Figure
\ref{sit_fig4} displaying the distance of the primaries as a
function of the time. We see that the distance of the apocentres
(corresponding to the maxima in the plots), as the two bodies
wheel around the origin, are preserved by the HBVM(18,2) (down
picture) while the same is not true for the Gauss method (upper
picture).

\begin{figure}[ht]
\begin{center}
\includegraphics[width=.9\textwidth,height=8cm]{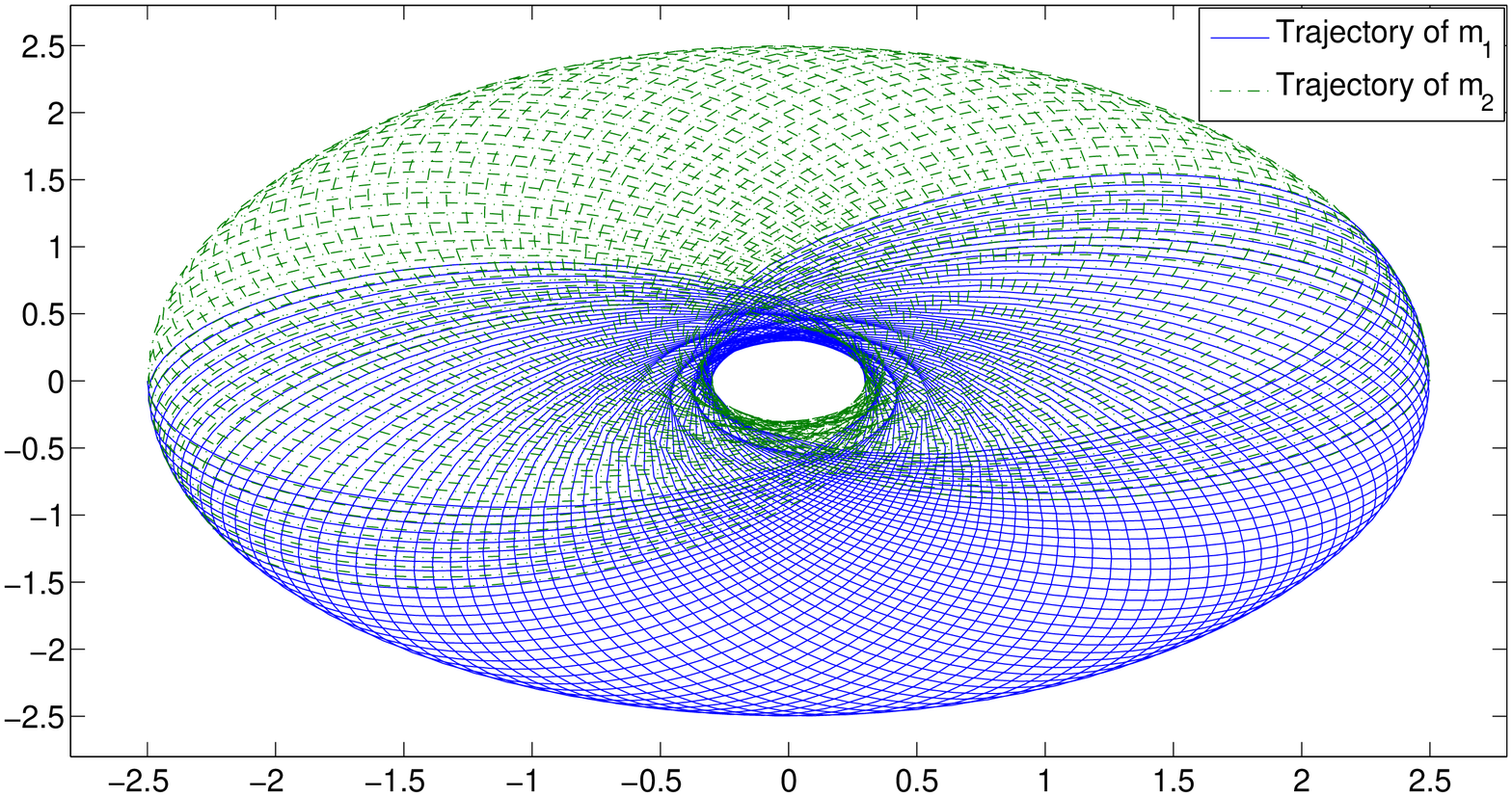}
\\

\bigskip\bigskip
\includegraphics[width=.9\textwidth,height=8cm]{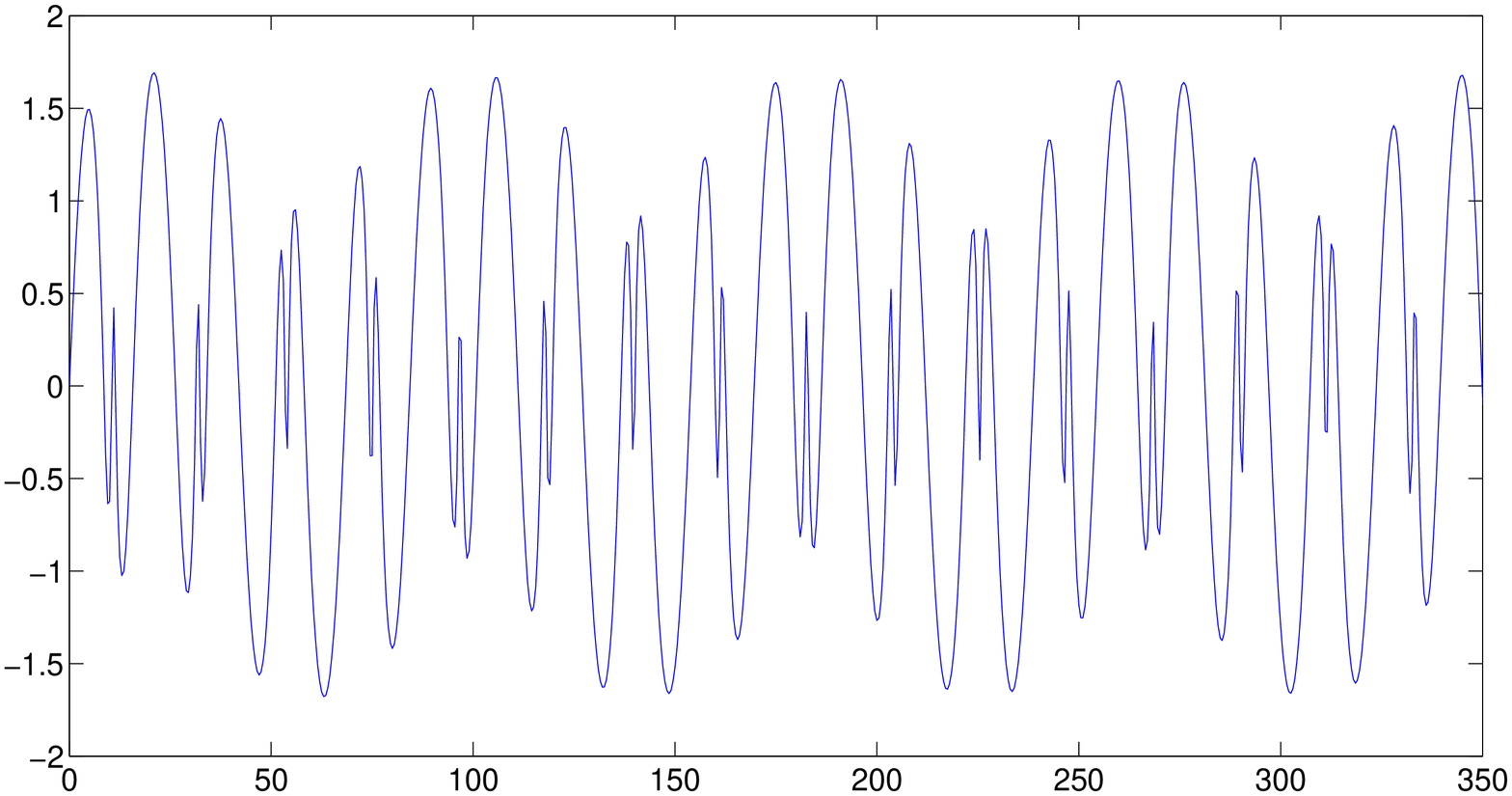}
\end{center}
\caption{The Sitnikov problem solved by the HBVM(18,2) method
(order 4), with stepsize $h=0.5$, in the time interval $[0,1500]$.
Upper picture: the trajectories of the primaries are ellipse
shape. The discretization introduces a fictitious uniform rotation
of the $xy$-plane which however does not alter the global symmetry
of the system. Down picture: the space-time diagram of the
planetoid on the $z$-axis displayed (for clearness) on the time
interval $[0, 350]$ shows that, although a large value of the
stepsize $h$ has been used, the overall behavior of the dynamics
is well reproduced (compare with the down picture in Figure
\ref{sit_fig1}).} \label{sit_fig3}
\end{figure}

\begin{figure}[ht]
\begin{center}
\includegraphics[width=.9\textwidth,height=8cm]{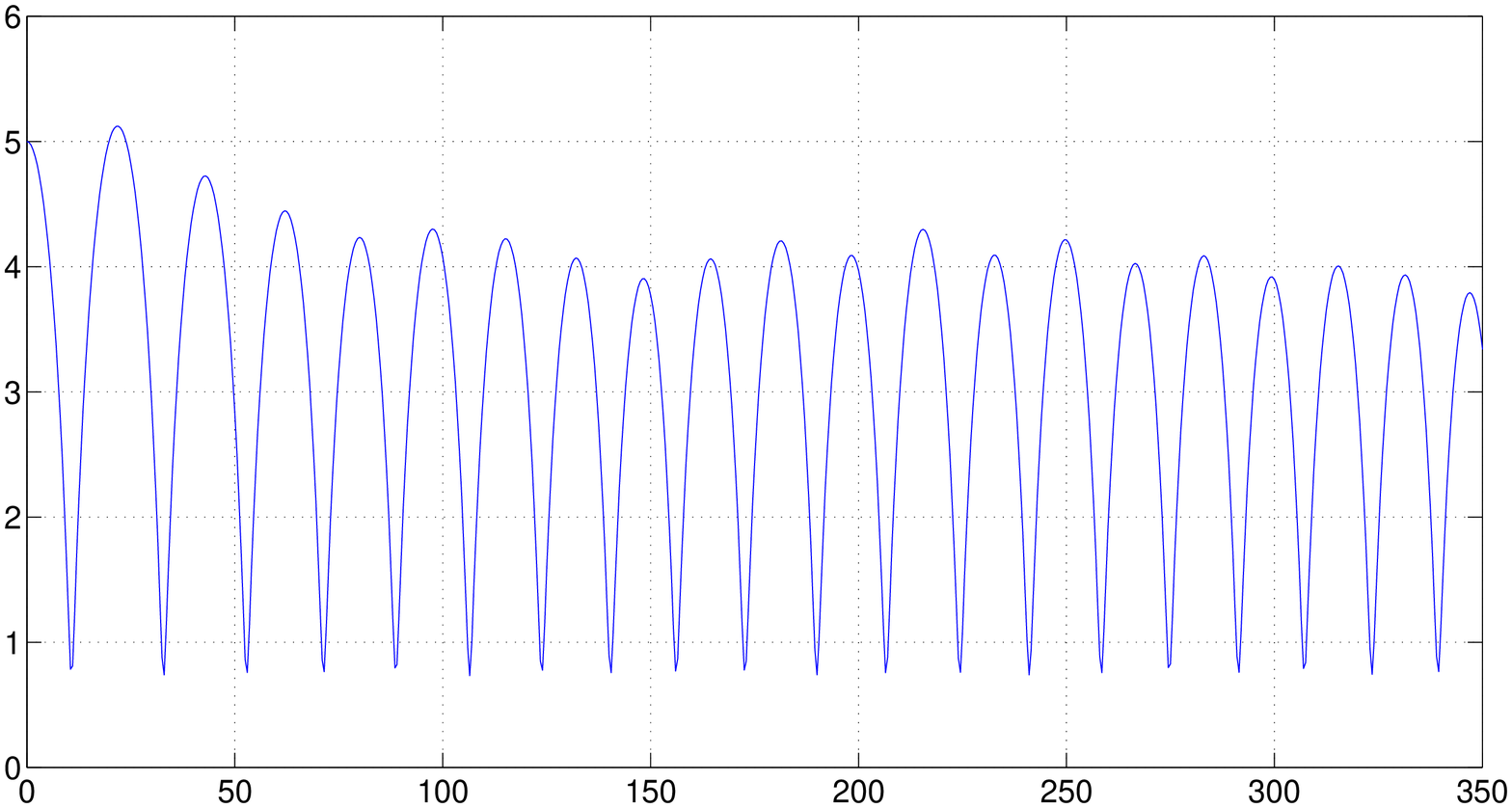}
\\

\bigskip\bigskip
\includegraphics[width=.9\textwidth,height=8cm]{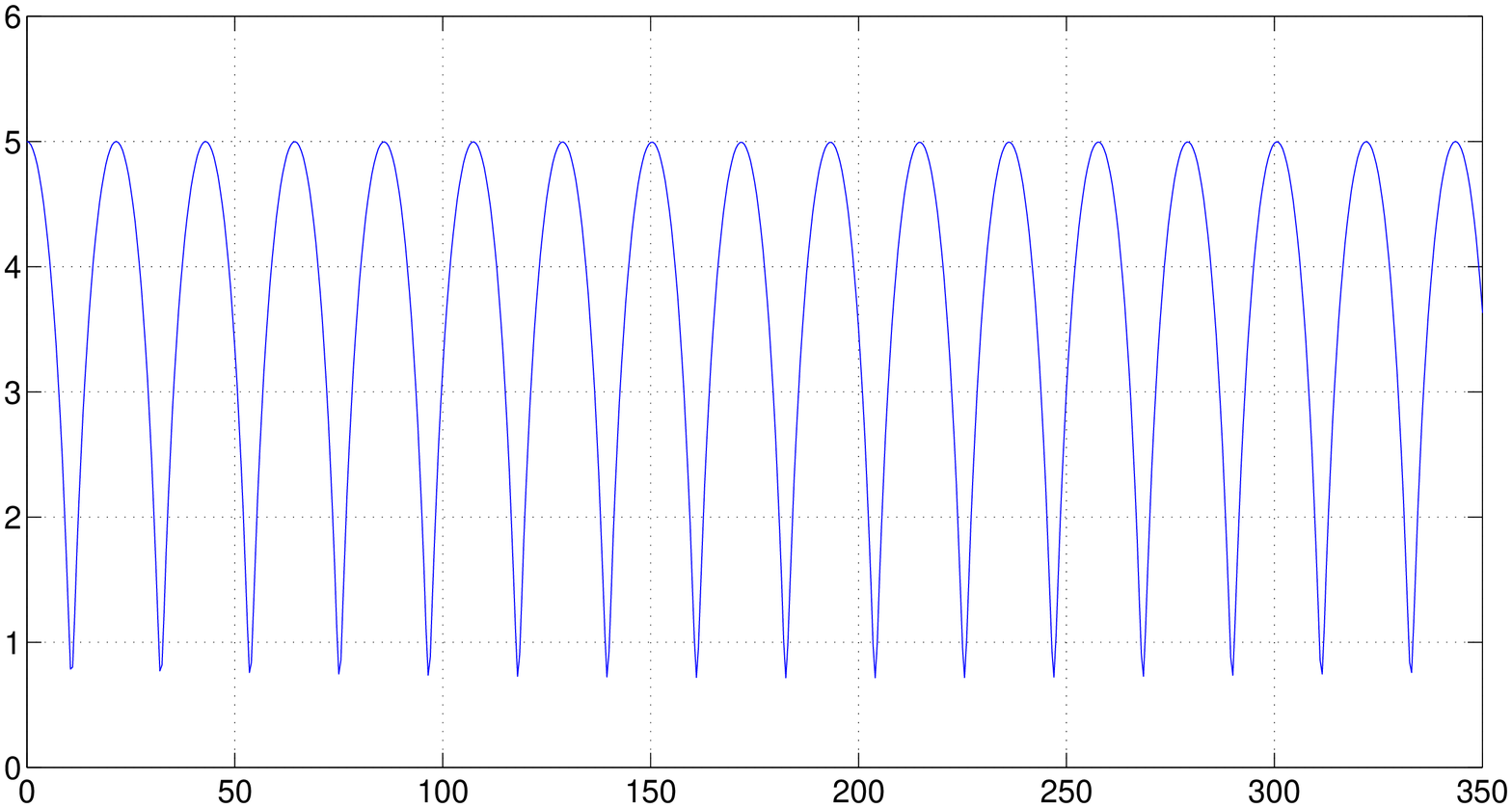}
\end{center}
\caption{Distance between the two primaries as a function of the
time, related to the numerical solutions generated by the Gauss
method (upper picture) and HBVM(18,2) (down picture). The maxima
correspond to the distance of apocentres. These are conserved by
HBVM(18,2) while the Gauss method introduces patchy oscillations
that destroy the overall symmetry of the system.} \label{sit_fig4}
\end{figure}

\chapter{Infinity HBVMs}\label{chap3}
 From the previous arguments, it is clear that the orthogonality
conditions (\ref{orth}), i.e., the fulfillment of the {\em Master
Functional Equation} (\ref{L}), is in principle only a sufficient
condition for the conservation property (\ref{conservation}) to
hold, when a generic polynomial basis $\{P_j\}$ is considered. Such
a condition becomes also necessary, when such basis is orthonormal.

\begin{theo}\label{ortbas} Let $\{P_j\}$ be an orthonormal basis on the
interval $[0,1]$. Then, assuming $H(y)$ to be analytical,
(\ref{conservation}) implies that each term in the sum has to
vanish.
\end{theo}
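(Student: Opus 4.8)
The plan is to exploit the fact that $H(y)$ is analytic, hence its gradient $\nabla H(\sigma(t_0+\tau h))$, as a function of $\tau\in[0,1]$, can be expanded as a convergent power series, and then to use the orthonormality of $\{P_j\}$ together with the fact that $\deg P_j = j-1$ to argue that the coefficients $\gamma_j^T\int_0^1 P_j(\tau)\nabla H(\sigma(t_0+\tau h))\,\dd\tau$ must vanish one at a time. The key structural observation is that $\sigma$ is a polynomial of degree $s$ determined by the vector coefficients $\gamma_1,\dots,\gamma_s$ (see \eqref{expan}), and $\gamma_j$ enters $\sigma$ multiplied by $\int_0^\tau P_j(x)\,\dd x$, a polynomial of degree exactly $j$. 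This makes the dependence of the left-hand side of \eqref{conservation} on the $\gamma_j$ highly structured and amenable to a graded/triangular argument.

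First I would introduce the scalar function $g(\tau) = \nabla H(\sigma(t_0+\tau h))$, which by analyticity of $H$ and polynomiality of $\sigma$ is an analytic (indeed, when $H$ is polynomial, polynomial) vector-valued function on $[0,1]$; write its expansion in the orthonormal basis completed to all degrees, $g(\tau) = \sum_{\ell\ge 1} \rho_\ell P_\ell(\tau)$ with $\rho_\ell\in\RR^{2m}$, the series converging (in the polynomial case it terminates). Then $\int_0^1 P_j(\tau)g(\tau)\,\dd\tau = \rho_j$ by \eqref{orto}, so the conservation condition \eqref{conservation} reads $\sum_{j=1}^s \gamma_j^T\rho_j = 0$. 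The claim to be proved is that in fact $\gamma_j^T\rho_j=0$ for each individual $j$.

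The heart of the argument is to show that the $s$ terms $\gamma_j^T\rho_j$ are "independent" in a suitable sense, so that a single linear relation among them forces each to vanish. Here I would use the skew-symmetry of $J$ and the defining relation \eqref{orth}, $\gamma_j = \int_0^1 P_j(\tau)J\nabla H(\sigma(t_0+\tau h))\,\dd\tau = J\rho_j$: substituting, $\gamma_j^T\rho_j = \rho_j^T J^T\rho_j = -\rho_j^T J\rho_j = 0$ automatically by skew-symmetry, for \emph{every} $j$ separately — which is exactly the assertion. So the real content is that \eqref{conservation} together with orthonormality \emph{forces} $\gamma_j = J\rho_j$, i.e. forces the orthogonality conditions \eqref{orth}; once that is established, term-by-term vanishing is immediate. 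To get $\gamma_j = J\rho_j$ I would argue by a perturbation/variational argument: regard \eqref{conservation} as holding identically along the one-parameter families of polynomials $\sigma$ obtained by varying a single $\gamma_k$, differentiate in $\gamma_k$, and use that $\partial\rho_j/\partial\gamma_k$ is controlled by the triangular structure ($\int_0^\tau P_k$ has degree $k$, so it only feeds into components $\rho_\ell$ with $\ell\ge$ something), extracting one scalar equation per $k$; the orthonormality guarantees these equations decouple into exactly the relations $\gamma_k = J\rho_k$.

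The main obstacle I anticipate is making the "decoupling" rigorous: a priori \eqref{conservation} is a single scalar equation, and one must produce $s$ independent scalar (in fact $2m$-vector) equations from it. The clean way is to note that the conservation property must hold not just for the particular $\sigma$ but is being \emph{imposed} as the definition of the method, hence as an identity in the free parameters, or alternatively that \eqref{conservation} must persist under the natural variations of the data $y_0$; either route lets one differentiate and strip off the basis functions $P_j$ one degree at a time using \eqref{orto}. I would carry the steps in this order: (i) set up $g(\tau)$ and its orthonormal expansion, rewriting \eqref{conservation} as $\sum_j\gamma_j^T\rho_j=0$; (ii) observe $\gamma_j=J\rho_j\Rightarrow\gamma_j^T\rho_j=0$ by skew-symmetry, so it suffices to recover \eqref{orth}; (iii) run the variational argument in the $\gamma_k$, using the graded structure $\deg\int_0^\tau P_j = j$ and orthonormality to isolate, for each $k$, the relation $\gamma_k = J\rho_k$; (iv) conclude that every summand $\gamma_j^T\rho_j$ vanishes. \QED
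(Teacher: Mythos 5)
Your opening reduction is exactly the paper's: expand $g(\tau)=\nabla H(\sigma(t_0+\tau h))$ in the orthonormal basis, use (\ref{orto}) to get $\int_0^1 P_j(\tau)g(\tau)\,\dd\tau=\rho_j$, and rewrite (\ref{conservation}) as $\sum_{j=1}^s\gamma_j^T\rho_j=0$. The divergence --- and the gap --- lies in how you pass from this single scalar identity to the vanishing of each summand. You propose to show that (\ref{conservation}) \emph{forces} $\gamma_j=J\rho_j$, i.e.\ forces the orthogonality conditions (\ref{orth}), and then to conclude by skew-symmetry. This cannot work: $\gamma_j=J\rho_j$ is strictly stronger than $\gamma_j^T\rho_j=0$, and the latter is all that can possibly follow from the hypothesis. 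Indeed, as Remark~\ref{J} of the paper points out, $\gamma_j=S\rho_j$ for \emph{any} nonsingular skew-symmetric $S$ (or, more generally, any $\gamma_j$ orthogonal to $\rho_j$) satisfies the term-by-term condition equally well; $S=J$ is merely the natural choice made afterwards. So step (iii) of your plan has a false target, and no argument can reach it.

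The variational mechanism itself is also unsound as a decoupling device. The coefficients $\gamma_k$ are not free parameters along which (\ref{conservation}) holds identically: they are precisely the unknowns that are \emph{chosen} so that (\ref{conservation}) holds for the given $H$ and $y_0$. Perturbing a single $\gamma_k$ changes $\sigma$, hence $g$ and all the $\rho_\ell$, and in general destroys (\ref{conservation}); there is nothing to differentiate. The decoupling in the paper comes from a different quantifier: the identity $\sum_j\gamma_j^T\rho_j=0$ must hold \emph{whatever the choice of the function} $H$ (this is where the arbitrariness of the analytic Hamiltonian, hence of the coefficients $\rho_j$, enters), and it is this freedom that forces each product $\gamma_j^T\rho_j$ to vanish separately. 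You mention this route in passing (``the conservation property \dots is being imposed as the definition of the method, hence as an identity'') but then set it aside in favour of the variational scheme; that passing remark is in fact the whole proof.
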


\begin{proof}
Let us consider the expansion$$g(\tau) \equiv \nabla H(\sigma(t_0+\tau h)) =
\sum_{\ell\ge 1} \rho_\ell P_\ell(\tau), \qquad \rho_\ell
=(P_\ell,g), \qquad \ell\ge1,$$

\no where, in general,
$$(f,g) = \int_0^1f(\tau)g(\tau) \dd\tau.$$

\no Substituting into (\ref{conservation}), yields
$$\sum_{j=1}^{s} \gamma_j^T (P_j,g)=\sum_{j=1}^{s} \gamma_j^T
\left(P_j,\sum_{\ell\ge 1} \rho_\ell P_\ell\right)=\sum_{j=1}^s
\gamma_j^T\rho_j=0.$$

\no Since this has to hold whatever the choice of the function
$H(y)$, one concludes that
\begin{equation}\label{ortoj}\gamma_j^T\rho_j=0, \qquad j=1,\dots,s.\QED
\end{equation}
\end{proof}\medskip

\begin{rem}\label{J}
In the case where $\{P_j\}$ is an orthonormal basis, from
(\ref{ortoj}) one then derives that $$\gamma_j = S \rho_j, \qquad
i=1,\dots,s,$$ where $S$ is any nonsingular skew-symmetric matrix.
The natural choice $S=J$ then leads to (\ref{orth}).
\end{rem}

Moreover, we observe that, if the Hamiltonian $H(y)$ is a
polynomial, the integral appearing at the right-hand side in
(\ref{hbvm_int}) is exactly computed by a quadrature formula, thus
resulting into a HBVM($k$,$s$) method with a sufficient number of
silent stages. As already stressed in the Chapter~\ref{chap1}, in
the non-polynomial case such formulae represent the limit of the
sequence HBVM($k$,$s$), as $k \rightarrow \infty$.

\begin{defn} For general Hamiltonians, we call the limit formula
(\ref{hbvm_int}) {\em Infinity Hamiltonian Boundary Value Method
of degree $s$} (in short, {\em $\infty$-HBVM of degree $s$} or
{\em HBVM$(\infty,s)$}) ~\cite{BIT09}.
\end{defn}

More precisely, due to the choice of the orthonormal basis
(\ref{orto1}),
$$ \mbox{HBVM}(\infty,s) = \lim_{k\rightarrow\infty}
\mbox{HBVM}(k,s),$$ whatever is the choice of the fundamental
abscissae $\{c_i\}$.

A worthwhile consequence of Theorems~\ref{ordine} and \ref{stab}
is that one can transfer to HBVM$(\infty,s)$  all those properties
of HBVM($k$,$s$) which are satisfied starting from a given $k \ge
k_0$ on: for example, the order and stability properties.

\begin{cor}
\label{ordineinf} Whatever the choice of the abscissae $c_1,\dots,c_s$,
HBVM$(\infty,s)$ \eqref{hbvm_int} has order $2s$ and is perfectly $A$-stable.
\end{cor}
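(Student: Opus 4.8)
The corollary asks to transfer the order and stability conclusions from HBVM$(k,s)$ to the limit method HBVM$(\infty,s)$. The natural route is to argue that HBVM$(\infty,s)$ simply \emph{is} HBVM$(k,s)$ for every $k$ large enough in all situations where these properties are tested, so that it inherits them verbatim.

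\medskip\noindent\textit{Order $2s$.} The key observation is that the classical order conditions are conditions on how the method integrates \emph{polynomial} vector fields up to a fixed degree (equivalently, the elementary differential/rooted-tree conditions through order $2s$). For a polynomial right-hand side $f$ of degree $\nu$, the integrand $P_j(\tau)f(\sigma(t_0+\tau h))$ appearing in \eqref{hbvm_int} is a polynomial in $\tau$ of bounded degree, hence for all $k$ with $r=k-s$ large enough the quadrature in \eqref{hbvm_sys} is exact and HBVM$(k,s)$ and HBVM$(\infty,s)$ produce the \emph{same} stages and the \emph{same} update. Since checking the order $p=2s$ only involves Taylor expansion of the local error against such polynomial test problems (indeed it suffices to verify the simplifying assumptions $C(s)$, $B(2s)$, $D(s-1)$, exactly as in the proof of Theorem~\ref{ordine}, and these are algebraic identities among the $\{\tau_i\},\{\omega_i\}$ that hold for the limit coefficients too), and Theorem~\ref{ordine} guarantees HBVM$(k,s)$ has order $2s$ for all such $k$, the limit method HBVM$(\infty,s)$ has order $2s$ as well. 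Equivalently: the local truncation error of HBVM$(\infty,s)$ agrees with that of HBVM$(k,s)$ up to terms that vanish to order higher than $2s+1$, because the two methods differ only through a quadrature error on a polynomial integrand which is null once $k$ is large.

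\medskip\noindent\textit{Perfect $A$-stability.} Here one can quote the proof of Theorem~\ref{stab} almost verbatim. On the linear test equation $f(y)=\lambda y$ the Master Functional Equation \eqref{L}--\eqref{Lf} involves only integrands that are polynomials of degree at most $2s-1$ in $\tau$; hence its solution $\sigma$ coincides with the degree-$s$ collocation polynomial $\sigma_s$ of the $s$-stage Gauss--Legendre method of order $2s$, independently of $k$ and of the choice of $\{c_i\}$. Consequently the stability function of HBVM$(\infty,s)$ equals that of Gauss--Legendre, namely the $(s,s)$-Padé approximation to $\e^z$, whose region of absolute stability is exactly $\CC^-$. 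Thus HBVM$(\infty,s)$ is perfectly $A$-stable.

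\medskip\noindent\textit{Main obstacle.} The only genuinely delicate point is to make rigorous the phrase ``properties satisfied starting from a given $k\ge k_0$ on are transferred to the limit.'' For stability this is immediate because the test equation already makes all relevant integrands polynomial, so nothing is in the limit at all. For the order, the care needed is to note that the order of a Runge--Kutta-type method is determined by finitely many polynomial (rooted-tree) order conditions on its coefficients, and that the coefficient array $\I_s\P_s^\T\O$ of HBVM$(\infty,s)$ is exactly the limit (indeed, for a given polynomial tree condition, eventually the \emph{value}) of the corresponding arrays of HBVM$(k,s)$; alternatively one invokes that HBVM$(\infty,s)$ coincides with HBVM$(k,s)$ when applied to any polynomial system and that order is a local, polynomial-test property. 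Either way, the conclusion of Theorem~\ref{ordine} passes to the limit without change, completing the proof. \QED
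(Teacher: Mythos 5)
Your proposal is correct and follows essentially the same route as the paper: the paper states Corollary~\ref{ordineinf} as an immediate consequence of Theorems~\ref{ordine} and \ref{stab}, via precisely the ``transfer to the limit of properties holding for all $k\ge k_0$'' argument that you spell out (exactness of the quadrature on the relevant polynomial integrands for the order, and the reduction to the Gauss--Legendre collocation polynomial on the linear test equation for perfect $A$-stability). You merely make explicit the details the paper leaves to the reader.
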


\chapter[HBVMs and collocation methods]
{Isospectral Property of HBVMs and their connections with
Runge-Kutta collocation methods}\label{chap4}

When applied to initial value problems, HBVMs may be viewed as a
special subclass of Runge-Kutta (RK) methods of collocation type.
In Chapter \ref{chap1} (see also \cite{brugnano09bit,BIT09}) the
RK formulation turned out useful in  stating results pertaining to
the order of the new formulae. Here, the RK notation will be
exploited to derive the isospectral property of HBVMs and
elucidate the existing connections between HBVMs and RK
collocation methods \cite{BIT10_1}. In doing this, our aim is
twofold:

\begin{enumerate}
\item to better elucidate the close link
between the new formulae and the classical collocation Runge-Kutta
methods;

\item to make the handling of the new formulae more comfortable to
the scientific community working in the context of RK methods.
\end{enumerate}

In fact, we think that HBVMs (and consequently their RK
formulation) may be of interest beyond their application to
Hamiltonian systems. Each HBVM($k$,$s$) becomes a classical
collocation method when $k=s$, while, for $k>s$, it conserves all
the features of the generating collocation formula, including the
order (which may be even improved, reaching eventually order
$p=2s$) and the dimension of the associated nonlinear system.

Let us then consider the matrix appearing in the Butcher tableau
(\ref{rk}), corresponding to HBVM$(k,s)$, i.e., the matrix
\begin{equation}\label{AMAT}A = \I_s \P_s^T\O\in\RR^{k\times
k}, \qquad k\ge s,\end{equation}

\no whose rank is $s$ (see (\ref{OIP})--(\ref{IDPO})).
Consequently it has a $(k-s)$-fold zero eigenvalue. To begin with,
we are going to discuss the location of the remaining $s$
eigenvalues of that matrix.

Before that, we state the following preliminary result, whose
proof can be found in \cite[Theorem\,5.6 on page\,83]{HW}.

\begin{lem}\label{gauss} The eigenvalues of the matrix
\begin{equation}\label{Xs}
X_s = \pmatrix{cccc}
\frac{1}2 & -\xi_1 &&\\
\xi_1     &0      &\ddots&\\
          &\ddots &\ddots    &-\xi_{s-1}\\
          &       &\xi_{s-1} &0\\
\endpmatrix, \end{equation}

\no with
\begin{equation}\label{xij}\xi_j=\frac{1}{2\sqrt{(2j+1)(2j-1)}}, \qquad
j=1,\dots,s-1,\end{equation}

\no coincide with those of the matrix in the Butcher tableau of
the Gauss-Legendre method of order $2s$.\end{lem}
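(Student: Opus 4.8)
The plan is to realize $X_s$ as the image of the Gauss--Legendre Butcher matrix under an explicit similarity transformation (the classical $W$-transformation), so that isospectrality is immediate.

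First I would set $k=s$ and let $\tau_1<\dots<\tau_s$ be the Gaussian nodes on $[0,1]$ with weights $\omega_1,\dots,\omega_s$; then $\O=\diag(\omega_1,\dots,\omega_s)$ and $\I_s,\P_s\in\RR^{s\times s}$ are as in \eqref{IDPO}, and by Example~\ref{gaussex} the Gauss--Legendre method of order $2s$ is exactly HBVM$(s,s)$ at these nodes, whose Butcher matrix is $A=\I_s\P_s^{\T}\O$ (cf.\ \eqref{rk}). I would put $W:=\P_s$ and first record that, since the quadrature is exact for polynomials of degree $\le 2s-2$, orthonormality \eqref{orto} gives $(W^{\T}\O W)_{ij}=\sum_{\ell}\omega_\ell P_i(\tau_\ell)P_j(\tau_\ell)=\int_0^1 P_i(t)P_j(t)\,\dd t=\delta_{ij}$, i.e.\ $W^{\T}\O W=I_s$; hence $W$ is invertible with $W^{-1}=W^{\T}\O=\P_s^{\T}\O$.

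The core computation is then $W^{-1}AW=\P_s^{\T}\O\,\I_s\,\P_s^{\T}\O\,\P_s=\P_s^{\T}\O\,\I_s$ (using $W^{\T}\O W=I_s$ once more), whose $(i,j)$ entry is $\sum_\ell\omega_\ell P_i(\tau_\ell)\int_0^{\tau_\ell}P_j(x)\,\dd x$. Since $t\mapsto P_i(t)\int_0^t P_j(x)\,\dd x$ is a polynomial of degree $(i-1)+j\le 2s-1$, the order-$2s$ quadrature evaluates the sum exactly, so the entry equals $\int_0^1 P_i(t)\left(\int_0^t P_j(x)\,\dd x\right)\dd t$. To finish I would invoke the elementary primitive identities for the shifted Legendre polynomials --- a direct consequence of the three-term recurrence in Remark~\ref{recur} (see also \cite{AS}) --- namely $\int_0^t P_1(x)\,\dd x=t=\frac12 P_1(t)+\xi_1 P_2(t)$ and $\int_0^t P_j(x)\,\dd x=\xi_j P_{j+1}(t)-\xi_{j-1}P_{j-1}(t)$ for $j\ge 2$, with $\xi_j$ as in \eqref{xij}. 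Substituting these and using orthonormality again shows that the $(i,j)$ entry equals $\frac12$ when $(i,j)=(1,1)$, equals $\xi_j$ when $i=j+1$, equals $-\xi_{j-1}$ when $i=j-1$, and vanishes otherwise; that is, $W^{-1}AW=X_s$. Hence $A$ and $X_s$ are similar and have the same spectrum.

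The only genuinely nontrivial ingredient is the Legendre identity invoked at the end --- in essence the fact that a primitive of $P_j$ lies in $\spann\{P_{j-1},P_{j+1}\}$, with the extra $\frac12 P_1$ term when $j=1$ --- which is precisely what forces the transformed matrix to be tridiagonal with the stated entries; everything else is routine bookkeeping with the exactness of Gaussian quadrature. As an alternative shortcut to the shape of $W^{-1}AW$, one may integrate $\int_0^1 P_i(t)\left(\int_0^t P_j(x)\,\dd x\right)\dd t$ by parts and use $\int_0^1 P_i=\delta_{i1}$ to get $M+M^{\T}$ equal to the matrix with a single $1$ in the $(1,1)$ slot, exhibiting the ``$\frac12$ in the corner plus skew-symmetric'' structure of $X_s$; but tridiagonality, hence the same Legendre identity, is still needed to pin down the entries.
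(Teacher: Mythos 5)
Your argument is correct and is precisely the classical $W$-transformation proof of \cite[Theorem~5.6]{HW}, which is exactly the source the paper invokes for this lemma (the paper states it without proof and defers to that reference): conjugating the Gauss Butcher matrix by $W=\P_s$, using exactness of the $s$-point Gaussian quadrature up to degree $2s-1$, and reducing to the Legendre primitive identity. Note only that the primitive identity you invoke at the end is exactly the content of the paper's Lemma~\ref{intleg} (the relation $\I_s=\P_{s+1}\hat{X}_s$), so within this paper you could simply cite that lemma rather than re-deriving it from the recurrence.
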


\medskip
We also need the following preliminary result, whose proof derives
from the properties of shifted-Legendre polynomials (see, e.g.,
\cite{AS} or the Appendix in \cite{brugnano09bit}).

\begin{lem}\label{intleg} With reference to the matrices in
(\ref{OIP})--(\ref{IDPO}), one has \begin{equation}\label{IPG}\I_s
= \P_{s+1}\hat{X}_s,\end{equation} where
\begin{equation}\label{G}
\hat{X}_s = \pmatrix{cccc}
\frac{1}2 & -\xi_1 &&\\
\xi_1     &0      &\ddots&\\
          &\ddots &\ddots    &-\xi_{s-1}\\
          &       &\xi_{s-1} &0\\
\hline &&&\xi_s\endpmatrix, \end{equation} with the $\xi_j$ defined by
(\ref{xij}). \end{lem}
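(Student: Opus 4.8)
The plan is to verify the matrix identity $\I_s = \P_{s+1}\hat X_s$ entrywise, by comparing the $(i,j)$ entries of both sides for $i=1,\dots,k$ and $j=1,\dots,s$. The left-hand side has entries $(\I_s)_{ij}=\int_0^{\tau_i}P_j(x)\,\dd x$, while the right-hand side has entries $\sum_{\ell=1}^{s+1}P_\ell(\tau_i)(\hat X_s)_{\ell j}$. Since $\hat X_s$ is tridiagonal (plus the single extra entry $\xi_s$ in row $s+1$), the sum on the right collapses to at most three terms: $\xi_{j-1}P_j(\tau_i)$ (from the subdiagonal, when $j\ge2$), the diagonal contribution (only $\tfrac12 P_1(\tau_i)$ when $j=1$), and $-\xi_j P_{j+1}(\tau_i)$ (from the superdiagonal). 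So it suffices to prove the scalar identity
\begin{equation}\label{antideriv}
\int_0^{t}P_j(x)\,\dd x = \begin{cases}\tfrac12 P_1(t) - \xi_1 P_2(t), & j=1,\\[2mm] \xi_{j-1}P_j(t) - \xi_j P_{j+1}(t), & j=2,\dots,s,\end{cases}
\end{equation}
as an identity of polynomials in $t$, with the $\xi_j$ given by \eqref{xij}.

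**Reducing to a known Legendre identity.** Identity \eqref{antideriv} is precisely the integrated form of the standard recurrence for the derivatives of (shifted, normalized) Legendre polynomials. Concretely, differentiating both sides of \eqref{antideriv} reduces it to the three-term relation expressing $P_j$ as a combination of $P_{j+1}'$ and $P_{j-1}'$ — equivalently, to the classical identity $(2n+1)\hat P_n(x) = \hat P_{n+1}'(x) - \hat P_{n-1}'(x)$ for shifted Legendre polynomials on $[0,1]$, rescaled via \eqref{orto1} by the normalization factors $\sqrt{2i-1}$. The constants $\xi_j = 1/(2\sqrt{(2j+1)(2j-1)})$ are exactly what these normalization factors produce. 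I would cite this, as the statement of the Lemma already does, to the properties of shifted Legendre polynomials in \cite{AS} or the Appendix of \cite{brugnano09bit}, rather than rederive it.

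**Checking the constant of integration.** Differentiation loses the additive constant, so one must separately verify that both sides of \eqref{antideriv} agree at $t=0$. The left-hand side vanishes there. On the right-hand side, for $j\ge2$ one uses $P_\ell(0) = \sqrt{2\ell-1}\,\hat P_{\ell-1}(0)$ together with $\hat P_n(0)=(-1)^n$, so $P_j(0) = (-1)^{j-1}\sqrt{2j-1}$ and $P_{j+1}(0) = (-1)^{j}\sqrt{2j+1}$; substituting the values of $\xi_{j-1},\xi_j$ one finds $\xi_{j-1}P_j(0) - \xi_j P_{j+1}(0) = \tfrac{(-1)^{j-1}}{2} + \tfrac{(-1)^{j-1}}{2} $ — wait, one checks this telescopes to $0$. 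For $j=1$ one has $P_1\equiv1$ and $P_2(t)=\sqrt3(2t-1)$, so the right-hand side at $t=0$ is $\tfrac12 - \xi_1\sqrt3(-1) = \tfrac12 - \tfrac12 = 0$, using $\xi_1 = 1/(2\sqrt3)$. Both boundary checks work out.

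**Where the difficulty lies.** There is no real obstacle: once \eqref{antideriv} is in hand the matrix identity is immediate bookkeeping, and \eqref{antideriv} itself is a repackaging of a textbook Legendre identity. The only place demanding care is getting the index bookkeeping right at the two ends — the role of the $\tfrac12$ in the top-left corner of $\hat X_s$ (which comes from the fact that $P_1$ is not of the form covered by the generic recurrence), and the role of the lone entry $\xi_s$ in the extra bottom row of $\hat X_s$, which is what makes $\P_{s+1}$ (rather than $\P_s$) the correct matrix, since $\int_0^{\tau_i}P_s(x)\,\dd x$ genuinely involves $P_{s+1}(\tau_i)$. Keeping the normalization constants $\sqrt{2i-1}$ straight through the rescaling \eqref{orto1} is the one spot where a sign or a factor could slip.
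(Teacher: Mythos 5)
Your overall strategy---verify the identity entrywise, reduce it to a scalar antiderivative identity for the normalized shifted Legendre basis, invoke the classical derivative relation, and check the constant of integration separately---is exactly the argument the paper has in mind (the paper offers no proof at all, only the citation to the properties of shifted Legendre polynomials). However, the scalar identity you write down is wrong, because you have read off the \emph{rows} of $\hat X_s$ where you need its \emph{columns}. The $(i,j)$ entry of $\P_{s+1}\hat X_s$ is $\sum_{\ell}P_\ell(\tau_i)(\hat X_s)_{\ell j}$, governed by the $j$th column of $\hat X_s$, whose nonzero entries are $(\hat X_s)_{j-1,j}=-\xi_{j-1}$ and $(\hat X_s)_{j+1,j}=\xi_j$ (plus the $\tfrac12$ in position $(1,1)$). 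The identity to be proved is therefore
\begin{equation*}
\int_0^{t}P_j(x)\,\dd x \;=\; \xi_j P_{j+1}(t)-\xi_{j-1}P_{j-1}(t)\quad(j\ge2),
\qquad
\int_0^{t}P_1(x)\,\dd x \;=\;\tfrac12 P_1(t)+\xi_1 P_2(t),
\end{equation*}
not $\xi_{j-1}P_j(t)-\xi_jP_{j+1}(t)$ and $\tfrac12P_1(t)-\xi_1P_2(t)$ as you state. Your version fails already for $j=1$: $\tfrac12P_1(t)-\xi_1P_2(t)=\tfrac12-\tfrac12(2t-1)=1-t$, whereas $\int_0^tP_1(x)\,\dd x=t$; and for $j=2$ it does not reproduce $\int_0^tP_2(x)\,\dd x=\sqrt3\,(t^2-t)$, while the column-indexed version does.

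The reason this slipped through is that both of your boundary checks at $t=0$ contain arithmetic errors that conceal the problem. For $j=1$ you evaluate $\tfrac12-\xi_1\sqrt3\,(-1)$ as $\tfrac12-\tfrac12=0$, but $-\xi_1\sqrt3\,(-1)=+\tfrac12$, so your right-hand side equals $1$ at $t=0$, not $0$. For $j\ge2$ the two terms of your combination at $t=0$ are $(-1)^{j-1}/(2\sqrt{2j-3})$ and $(-1)^{j-1}/(2\sqrt{2j-1})$: they have the \emph{same} sign and do not cancel, and your write-up visibly stalls at this point (``wait, one checks this telescopes to $0$''---it does not). The cancellation you want occurs only for the correct combination $-\xi_{j-1}P_{j-1}(0)+\xi_jP_{j+1}(0)=\bigl((-1)^{j-1}+(-1)^{j}\bigr)/(2\sqrt{2j-1})=0$. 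A smaller related slip: for the shifted polynomials on $[0,1]$ the derivative relation carries a factor $2$, namely $\hat P'_{n+1}-\hat P'_{n-1}=2(2n+1)\hat P_n$; that factor is precisely where the $\tfrac12$ in $\xi_j$ comes from. With the column-indexed identity and the corrected boundary check, your argument goes through and is the standard one.
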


\medskip
The following result then holds true  \cite{BIT10}.

\begin{theo}[\bf Isospectral Property of HBVMs]\label{mainres}
For all $k\ge s$ and for any choice of the abscissae $\{\tau_i\}$
such that $B(2s)$ holds true, the nonzero eigenvalues of the
matrix $A$ in (\ref{AMAT}) coincide with those of the matrix of
the Gauss-Legendre method of order $2s$.
\end{theo}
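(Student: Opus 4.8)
\medskip\noindent\textbf{Proof proposal.}\quad
The plan is to collapse the $k\times k$ eigenvalue problem for $A=\I_s\P_s^T\O$ to an $s\times s$ one, and then to recognise the resulting small matrix as the matrix $X_s$ of Lemma~\ref{gauss}, using Lemma~\ref{intleg} to do the identification. First I would write $A=BC$ with the rectangular factors $B=\I_s\in\RR^{k\times s}$ and $C=\P_s^T\O\in\RR^{s\times k}$, and invoke the elementary identity $\det(\lambda I_k-BC)=\lambda^{\,k-s}\det(\lambda I_s-CB)$. This already accounts for the $(k-s)$-fold zero eigenvalue coming from the rank deficiency of $A$, and shows that the remaining $s$ eigenvalues of $A$, counted with algebraic multiplicity, are exactly the eigenvalues of the $s\times s$ matrix $CB=\P_s^T\O\,\I_s$.

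Next I would compute $\P_s^T\O\,\I_s$ in closed form. Substituting $\I_s=\P_{s+1}\hat X_s$ from Lemma~\ref{intleg} gives $\P_s^T\O\,\I_s=\bigl(\P_s^T\O\,\P_{s+1}\bigr)\hat X_s$. The $(i,n)$ entry of $\P_s^T\O\,\P_{s+1}$ is the quadrature sum $\sum_{\ell=1}^k\omega_\ell P_i(\tau_\ell)P_n(\tau_\ell)$; since $\deg P_i=i-1\le s-1$ and $\deg P_n=n-1\le s$, the product $P_iP_n$ has degree at most $2s-1$, so the hypothesis $B(2s)$ makes the quadrature exact and the sum equals $\int_0^1 P_iP_n\,\d\tau=\delta_{in}$ by orthonormality of the $\{P_j\}$ (note $\delta_{i,s+1}=0$ since $i\le s$). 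Hence $\P_s^T\O\,\P_{s+1}=[\,I_s\mid 0\,]$, and right-multiplication by $\hat X_s$ merely deletes its last row, leaving $\P_s^T\O\,\I_s=X_s$.

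Finally, Lemma~\ref{gauss} identifies the eigenvalues of $X_s$ with those of the Butcher matrix of the Gauss--Legendre method of order $2s$; since that matrix is nonsingular, all $s$ of these eigenvalues are nonzero, so the $s$ ``extra'' eigenvalues of $A$ produced by the first step are precisely the nonzero eigenvalues of $A$. Combining the three steps yields the claim for every $k\ge s$ and every admissible choice of $\{\tau_i\}$. Given Lemmas~\ref{gauss} and~\ref{intleg}, there is no real obstacle: the only points demanding attention are the degree count that lets the weak hypothesis $B(2s)$ (rather than a higher-order quadrature) close the argument, and the observation that $X_s$ is nonsingular, so that the list ``the $s$ extra eigenvalues of $A$'' genuinely coincides with ``the nonzero eigenvalues of $A$''.
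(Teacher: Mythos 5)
Your argument is correct, and it rests on exactly the same two computational ingredients as the paper's proof: the factorization $\I_s=\P_{s+1}\hat X_s$ of Lemma~\ref{intleg}, and the relation $\P_s^T\O\P_{s+1}=\left(I_s~\bfo\right)$, which you justify precisely as the paper does (every product $P_iP_n$ occurring has degree at most $2s-1$, so $B(2s)$ plus orthonormality gives $\delta_{in}$). The difference is the linear-algebraic packaging. The paper exhibits the columns of $\P_{s+1}$ as spanning a right invariant subspace, $A\P_{s+1}=\P_{s+1}\widetilde X_s$ with $\widetilde X_s\in\RR^{(s+1)\times(s+1)}$ equal to $X_s$ bordered by a zero column; it must then discard the resulting spurious zero eigenvalue of $\widetilde X_s$, appeal to the rank of $A$ to cap the number of nonzero eigenvalues, and treat $k=s$ as a separate case (where $B(2s)$ forces the Gauss nodes). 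You instead factor $A=BC$ with $B=\I_s$, $C=\P_s^T\O$ and use $\det(\lambda I_k-BC)=\lambda^{k-s}\det(\lambda I_s-CB)$, landing directly on the $s\times s$ matrix $CB=\P_s^T\O\,\I_s=\left(I_s~\bfo\right)\hat X_s=X_s$. This buys you a uniform treatment of $k=s$ and $k>s$, the zero eigenvalue with its exact algebraic multiplicity $k-s$ for free, and no spurious eigenvalue to argue away; the only residual point, which you correctly flag and which the paper needs implicitly as well, is the nonsingularity of $X_s$ (equivalently of the Gauss Butcher matrix), so that the $s$ eigenvalues of $CB$ really are the complete list of nonzero eigenvalues of $A$. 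What the paper's formulation buys in exchange is the explicit invariant-subspace relation $A\P_{s+1}=\P_{s+1}\widetilde X_s$, which is reused later (e.g., in the block-triangularization $\P^{-1}A\P$ underlying the alternative order proof). Both routes finish by invoking Lemma~\ref{gauss}.
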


\begin{proof}
For $k=s$, the abscissae $\{\tau_i\}$ have to be the $s$
Gauss-Legendre nodes on $[0,1]$, so that HBVM$(s,s)$ reduces to
the Gauss Legendre method of order $2s$, as already observed in
Example~\ref{gaussex}.

When $k>s$, from the orthonormality of the basis, see
(\ref{orto}), and considering that the quadrature with weights
$\{\omega_i\}$ is exact for polynomials of degree (at least)
$2s-1$, one easily obtains that
$$\P_s^T\O\P_{s+1} = \left( I_s ~ \bfo\right),$$

\no since, for all $i=1,\dots,s$, ~and~ $j=1,\dots,s+1$:
$$\left(\P_s^T\O\P_{s+1}\right)_{ij} = \sum_{\ell=1}^k \omega_\ell
P_i(\tau_\ell)P_j(\tau_\ell) =\int_0^1 P_i(t)P_j(t)\dd
t=\delta_{ij}.$$

\no By taking into account the result of Lemma~\ref{intleg}, one
then obtains:
\begin{eqnarray}\nonumber
A\P_{s+1} &=& \I_s \P_s^T\O\P_{s+1} = \I_s \left(I_s~\bfo\right) =\P_{s+1}
\hat{X}_s \left(I_s~\bfo\right) = \P_{s+1}\left(\hat{X}_s~\bfo\right)\\
 &=& \P_{s+1}
\pmatrix{cccc|c}
\frac{1}2 & -\xi_1 && &0\\
\xi_1     &0      &\ddots& &\vdots\\
          &\ddots &\ddots    &-\xi_{s-1}&\vdots\\
          &       &\xi_{s-1} &0&0\\
\hline &&&\xi_s&0\endpmatrix \equiv \P_{s+1} \widetilde
X_s,\label{tXs}
\end{eqnarray}

\no with the $\{\xi_j\}$ defined according to (\ref{xij}).
Consequently, one obtains that the columns of $\P_{s+1}$
constitute a basis of an invariant (right) subspace of matrix $A$,
so that the eigenvalues of $\widetilde X_s$ are eigenvalues of
$A$. In more detail, the eigenvalues of $\widetilde X_s$ are those
of $X_s$ (see (\ref{Xs})) and the zero eigenvalue. Then, also in
this case, the nonzero eigenvalues of $A$ coincide with those of
$X_s$, i.e., with the eigenvalues of the matrix defining the
Gauss-Legendre method of order $2s$.\QED
\end{proof}

\section{HBVMs and collocation methods}

By using the previous result and notations, now we go to elucidate
the existing connections between HBVMs and RK collocation methods.
We shall continue to use an orthonormal basis $\{P_j\}$, along
which the underlying {\em extended collocation} polynomial
$\sigma(t)$ is expanded, even though the arguments could be
generalized to more general bases, as sketched below. On the other
hand, the distribution of the internal abscissae can be arbitrary.

Our starting point is a generic collocation method with $k$
stages, defined by the tableau
\begin{equation}
\label{collocation_rk}
\begin{array}{c|c}\begin{array}{c} \tau_1\\ \vdots\\ \tau_k\end{array} &  \mathcal A \\
 \hline                    &\omega_1\, \ldots  ~ \omega_k
\end{array}
\end{equation}
where, for $i,j=1,\dots,k$, $\mathcal A=
\left(\alpha_{ij}\right)\equiv\left(\int_0^{\tau_i} \ell_j(\tau)
\mathrm{d}\tau \right)$ and $\omega_j=\int_0^{1} \ell_j(\tau)
\mathrm{d}\tau$, $\ell_j(t)$ being the $j$th Lagrange polynomial
of degree $k-1$ defined on the set of abscissae $\{\tau_i\}$.

Given a positive integer $s\le k$, we can consider a basis
$\{p_1(\tau), \dots, p_s(\tau)\}$ of the vector space of
polynomials of degree at most $s-1$, and we set
\begin{equation}
\label{P} \hat\P_s = \pmatrix{cccc}
p_1(\tau_1) & p_2(\tau_1) & \cdots & p_s(\tau_1) \\
p_1(\tau_2) & p_2(\tau_2) & \cdots & p_s(\tau_2) \\
\vdots   & \vdots   &        & \vdots \\
p_1(\tau_k) & p_2(\tau_k) & \cdots & p_s(\tau_k)
\endpmatrix_{k \times s}
\end{equation}

\no (note that $\hat\P_s$ is full rank since the nodes are
distinct). The class of RK methods we are interested in is defined
by the tableau
\begin{equation}
\label{hbvm_rk}
\begin{array}{c|c}\begin{array}{c} \tau_1\\ \vdots\\ \tau_k\end{array} &
A \equiv \mathcal A \hat\P_s \Lambda_s \hat\P_s^T \Omega\\
 \hline                    &\omega_1\, \ldots \ldots ~ \omega_k
\end{array}
\end{equation}

\no where $\Omega=\diag(\omega_1,\dots,\omega_k)$ and
$\Lambda_s=\diag(\eta_1,\dots,\eta_s)$; the coefficients $\eta_j$,
$j=1,\dots,s$, have to be selected by imposing suitable
consistency conditions on the stages $\{Y_i\}$ \cite{BIT09}. In
particular, when the basis is orthonormal, as we shall assume
hereafter, then matrix $\hat\P_s$ reduces to matrix $\P_s$ in
(\ref{OIP})--(\ref{IDPO}), $\Lambda_s = I_s$, and consequently
(\ref{hbvm_rk}) becomes
\begin{equation}
\label{hbvm_rk1}
\begin{array}{c|c}\begin{array}{c} \tau_1\\ \vdots\\ \tau_k\end{array} &
A \equiv \mathcal A \P_s \P_s^T \Omega\\
 \hline       &\omega_1\, \ldots \ldots ~ \omega_k
\end{array}
\end{equation}

We note that the Butcher array $A$ has rank which cannot exceed
$s$, because it is defined by {\em filtering} $\mathcal A$ by the
rank $s$ matrix $\P_s \P_s^T \Omega$.

The following result then holds true, which clarifies the existing
connections between classical RK collocation methods and HBVMs.

\begin{theo}\label{collhbvm} Provided that the quadrature formula defined by the
weights $\{\omega_i\}$ is exact for polynomials at least $2s-1$
(i.e., the RK method defined by the tableau (\ref{hbvm_rk1})
satisfies the usual simplifying assumption $B(2s)$), then the
tableau (\ref{hbvm_rk1}) defines a HBVM$(k,s)$ method based at the
abscissae $\{\tau_i\}$.
\end{theo}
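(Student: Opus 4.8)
The plan is to show that the Runge-Kutta method with tableau (\ref{hbvm_rk1}) produces exactly the same stage equations as the HBVM$(k,s)$ method, i.e.\ the discrete problem (\ref{hbvm_sys})--(\ref{rk0}). Recall from Chapter~\ref{chap1} that HBVM$(k,s)$, written in RK form, has Butcher matrix $\I_s\P_s^T\O$ (see (\ref{rk0})--(\ref{rk})). So the theorem reduces to the matrix identity
\begin{equation}
\label{pp-key}
\mathcal A\,\P_s\P_s^T\Omega = \I_s\,\P_s^T\Omega ,
\end{equation}
under the hypothesis $B(2s)$. Since $\Omega$ is nonsingular and $\P_s^T$ has full row rank, it suffices to prove $\mathcal A\,\P_s = \I_s$, that is, $\sum_{j}\alpha_{ij}P_j(\tau_\ell)$-type relations; more precisely the $(i,j)$ entry of $\mathcal A\P_s$ is $\sum_{\ell=1}^k \alpha_{i\ell}P_j(\tau_\ell)$, whereas $(\I_s)_{ij}=\int_0^{\tau_i}P_j(x)\,\dd x$.

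First I would unpack the left-hand side using the definition $\alpha_{i\ell}=\int_0^{\tau_i}\ell_\ell(\tau)\,\dd\tau$, where $\ell_\ell$ is the $\ell$th Lagrange polynomial on the $k$ nodes $\{\tau_i\}$. Then
$$\sum_{\ell=1}^k \alpha_{i\ell}P_j(\tau_\ell)
= \int_0^{\tau_i}\Big(\sum_{\ell=1}^k P_j(\tau_\ell)\ell_\ell(\tau)\Big)\dd\tau .$$
Now $P_j$ has degree $j-1\le s-1\le k-1$, so $P_j$ coincides with its own Lagrange interpolant on the $k$ distinct nodes: $\sum_{\ell=1}^k P_j(\tau_\ell)\ell_\ell(\tau)=P_j(\tau)$. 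Hence the inner sum is exactly $P_j(\tau)$ and the whole expression equals $\int_0^{\tau_i}P_j(x)\,\dd x = (\I_s)_{ij}$. This proves $\mathcal A\P_s=\I_s$, and (\ref{pp-key}) follows immediately by right-multiplying by $\P_s^T\Omega$.

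The only place where a hypothesis is genuinely needed is to make sure the method (\ref{hbvm_rk1}) \emph{is} a HBVM$(k,s)$ in the precise sense of Definition~\ref{defhbvmks}, not merely a method with the same Butcher matrix: one must check that the stages $Y_i$ thus defined satisfy the defining orthogonality relations (\ref{orth})--(\ref{y_i}) and therefore that $\sigma$ solves the Master Functional Equation up to the quadrature error. This is where $B(2s)$ enters: it guarantees that the sums $\sum_{\ell=1}^k\omega_\ell P_j(\tau_\ell)f(Y_\ell)$ reproduce the integrals $\int_0^1 P_j(\tau)f(\sigma(t_0+\tau h))\,\dd\tau$ for the polynomial part, exactly as in the passage from (\ref{hbvm_int}) to (\ref{hbvm_sys}); equivalently it is what identifies the weights $\{\omega_i\}$ appearing in (\ref{hbvm_rk1}) with a quadrature of order $\ge 2s$, matching the hypothesis of Theorem~\ref{ordine}. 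So I would close the argument by substituting $\mathcal A\P_s=\I_s$ into (\ref{hbvm_rk1}), noting that the resulting tableau is literally (\ref{rk}), and invoking Remark~\ref{ascisse}/Remark~\ref{MFE} to conclude that this is the HBVM$(k,s)$ based at $\{\tau_i\}$. I expect the main (minor) obstacle to be bookkeeping: being careful that the Lagrange basis underlying $\mathcal A$ uses all $k$ abscissae (so the degree count $j-1\le k-1$ is what licenses exact interpolation of $P_j$), and that the role of $B(2s)$ is to pin down the $\{\omega_i\}$ as the collocation quadrature weights, which is exactly the consistency condition alluded to after (\ref{hbvm_rk}).
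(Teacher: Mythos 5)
Your proposal is correct and follows essentially the same route as the paper: the paper's proof also reduces the statement to the identity $\I_s=\mathcal A\,\P_s$, established by expanding each $P_j$ (degree $j-1\le k-1$) in the Lagrange basis on the $k$ nodes and integrating, and then substitutes this into (\ref{hbvm_rk1}) to recover the tableau (\ref{rk}). Your additional remarks on the role of $B(2s)$ (identifying the weights with a quadrature of order at least $2s$, so that (\ref{rk}) is indeed a HBVM$(k,s)$) are a sensible explicit gloss on a point the paper leaves implicit in the hypothesis.
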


\proof Let us expand the basis $\{P_1(\tau),\dots,P_s(\tau)\}$
along the Lagrange basis $\{\ell_j(\tau)\}$, $j=1,\dots,k$,
defined over the nodes $\tau_i$, $i=1,\dots,k$: $$
P_j(\tau)=\sum_{r=1}^k P_j(\tau_r) \ell_r(\tau),
 \qquad j=1,\dots,s.$$

\no It follows that, for $i=1,\dots,k$ and $j=1,\dots,s$:
$$\int_0^{\tau_i} P_j(x) \mathrm{d}x = \sum_{r=1}^k P_j(\tau_r)
\int_0^{\tau_i} \ell_r(x) \mathrm{d}x = \sum_{r=1}^k P_j(\tau_r)
\alpha_{ir},$$

\no that is (see (\ref{OIP})--(\ref{IDPO}) and
(\ref{collocation_rk})),
\begin{equation}\label{APeqI}
\I_s = \mathcal A \P_s.
\end{equation}

\no By substituting (\ref{APeqI}) into (\ref{hbvm_rk1}), one
retrieves that tableau (\ref{rk}), which defines the method
HBVM$(k,s)$. This completes the proof.\QED

\medskip
The resulting Runge-Kutta method \eqref{hbvm_rk1} is then energy
conserving if applied to polynomial Hamiltonian systems
\eqref{hamilode} when the degree of $H(y)$, is lower than or equal
to a quantity, say $\nu$, depending on $k$ and $s$. As an example,
when a Gaussian distribution of the nodes $\{\tau_i\}$ is
considered, one obtains (\ref{knu}).

\begin{rem}[{\bf About Simplecticity}]\label{symplectic} The choice
of the abscissae $\{\tau_1,\dots,\tau_k\}$ at the Gaussian points
in $[0,1]$ has also another important consequence, since, in such
a case, the collocation method (\ref{collocation_rk}) is the Gauss
method of order $2k$ which, as is well known, is a {\em symplectic
method}. The result of Theorem~\ref{collhbvm} then states that,
for any $s\le k$, the HBVM$(k,s)$ method is related to the Gauss
method of order $2k$ by the relation: $$A = {\cal A}
(\P_s\P_s^T\O),$$

\no where the {\em filtering matrix} $(\P_s\P_s^T\O)$ essentially
makes the Gauss method of order $2k$ ``work'' in a suitable
subspace.
\end{rem}

It seems like the price paid to achieve such conservation
properties consists in the lowering of the order of the new method
with respect to the original one \eqref{collocation_rk}. Actually
this is not true,  because a fair comparison would be to relate
method \eqref{rk}--\eqref{hbvm_rk1}  to a collocation method
constructed on $s$ rather than on $k$  stages. This fact will be
fully elucidated in Chapter~\ref{chap5}.

\subsection{An alternative proof for the order of HBVMs}

We conclude this chapter by observing that the order $2s$ of an
HBVM$(k,s)$ method, under the hypothesis that
\eqref{collocation_rk} satisfies the usual simplifying assumption
$B(2s)$, i.e., the quadrature defined by the weights
$\{\omega_i\}$ is exact for polynomials of degree at least $2s-1$,
may be stated by using an alternative, though equivalent,
procedure to that used in the proof of Theorem~\ref{ordine}.

Let us then define the $k \times k$ matrix $\P\equiv \P_k$  (see
(\ref{OIP})--(\ref{IDPO})) obtained by ``enlarging'' the matrix
$\P_s$ with $k-s$ columns defined by the normalized shifted
Legendre polynomials $P_j(\tau)$, $j=s+1,\dots,k$, evaluated at
$\{\tau_i\}$, i.e.,
$$\P=\pmatrix{ccc} P_1(\tau_1) & \dots &P_k(\tau_1)\\ \vdots &
&\vdots\\ P_1(\tau_k) & \dots &P_k(\tau_k)\endpmatrix.$$

\no By virtue of property $B(2s)$ for the quadrature formula
defined by the weights $\{\omega_i\}$, it satisfies
$$
\P^T \O \P =\pmatrix{ll} I_s & O \\ O & R
\endpmatrix, \qquad R\in\RR^{k-s\times k-s}.
$$

\no  This implies that $\P$ satisfies the property $T(s,s)$ in
\cite[Definition\,5.10 on page 86]{HW}, for the quadrature formula
$(\omega_i,\tau_i)_{i=1}^k$. Therefore, for the matrix $A$
appearing in \eqref{hbvm_rk1} (i.e., (\ref{rk}), by virtue of
Theorem~\ref{collhbvm}), one obtains:
\begin{equation}
\label{rk_leg1} \P^{-1} A \P = \P^{-1} \mathcal A \P \pmatrix{ll} I_s \\
& O\endpmatrix = \pmatrix{ll} \widetilde{X}_s \\ & O
\endpmatrix,
\end{equation}

\no where $\widetilde X_s$ is the matrix defined in (\ref{tXs}).
Relation \eqref{rk_leg1} and \cite[Theorem\,5.11 on page 86]{HW}
prove that method (\ref{hbvm_rk1}) (i.e., HBVM$(k,s)$) satisfies
$C(s)$ and $D(s-1)$ and, hence, its order is $2s$.

\begin{rem}[{\bf Invariance of the order}] From the previous result
 we deduce the invariance of the superconvergence property
 of HBVM($k$,$s$) with respect to the distribution of the  abscissae
$\tau_i$, $i=1,\dots,k$, the only assumption to get the order $2s$
being that the underlying quadrature formula has degree of
precision $2s-1$. Such exceptional circumstance is likely to have
interesting applications beyond the purposes here presented.
\end{rem}

\chapter{Blended HBVMs}\label{chap5}

We shall now consider some computational aspects concerning
HBVM$(k,s)$. In more details, we now show how its cost depends
essentially on $s$, rather than on $k$, in the sense that the
nonlinear system to be solved, for obtaining the discrete
solution, has (block) dimension $s$
\cite{BIS,brugnano09bit,BIT10}.

This could be inferred from the fact that the silent stages
(\ref{hYi}) depend on the fundamental stages: let us see the
details. In order to simplify the notation, we shall fix the
fundamental stages at $\tau_1,\dots,\tau_s$, since we have already
seen that, due to the use of an orthonormal basis, they could be
in principle chosen arbitrarily, among the abscissae $\{\tau_i\}$.
With this premise, we have, from (\ref{discr_lin}),
(\ref{aij})--(\ref{hbvm_int}), and by using the notation
(\ref{tiyi}),
\begin{equation}\label{ys}
y_i = y_0 + h\sum_{j=1}^s a_{ij}  \sum_{\ell = 1}^k\omega_\ell
P_j(\tau_\ell)f_\ell, \qquad i = 1,\dots,s.\end{equation}

This equation is now coupled with that defining the silent stages,
i.e., from (\ref{expan}) and (\ref{hYi}),
\begin{equation}\label{hy}
y_i = y_0 + h\sum_{j=1}^s \cc_j \int_0^{\tau_i}P_j(t) \dd t,
\qquad i = s+1,\dots,k.
\end{equation}

Let us now partition the matrices $\I_s,\P_s\in\RR^{k\times s}$ in
(\ref{OIP})--(\ref{IDPO}) into $$\I_{s1},\P_{s1}\in\RR^{s\times
s},\qquad \I_{s2},\P_{s2}\in\RR^{k-s\times s},$$

\no containing the entries defined by the fundamental abscissae
and the silent abscissae, respectively. Similarly, we partition
the vector $\bfy$ into $\bfy_1$, containing the fundamental
stages, and $\bfy_2$ containing the silent stages and,
accordingly, let
$$\O_1\in\RR^{s\times s}, \qquad \O_2\in\RR^{k-s\times k-s},$$

\no be the diagonal matrices containing the corresponding entries
in matrix $\O$. Finally, let us define the vectors $$\bfgam =
(\cc_1,\dots,\cc_s)^T, \qquad e=(1,\dots,1)^T\in\RR^s, \qquad u =
(1,\dots,1)^T\in\RR^{k-s}.$$

\no Consequently, we can rewrite (\ref{ys}) and (\ref{hy}), as
\begin{eqnarray}\label{y1}
\bfy_1 &=& e\otimes y_0 + h\I_{s1} \left( \P_{s1}^T~
\P_{s2}^T\right) \pmatrix{cc}\O_1 \\ &\O_2\endpmatrix\otimes I_{2m}
\pmatrix{c} f(\bfy_1)\\
f(\bfy_2)\endpmatrix,\\ \bfy_2 &=& u\otimes y_0 +h \I_{s2}\otimes
I_{2m} \bfgam, \label{y2}\end{eqnarray}

\no respectively. The vector $\bfgam$ can be obtained by the
identity (see (\ref{y_i}))
$$\bfy_1 = e\otimes y_0 + h\I_{s1}\otimes I_{2m} \bfgam,$$

\no thus giving \begin{eqnarray}\nonumber \bfy_2 &=&
\left(u-\I_{s2}\I_{s1}^{-1}e\right)\otimes y_0
+\I_{s2}\I_{s1}^{-1}\otimes I_{2m} \bfy_1\\ &\equiv& \hat u\otimes
y_0 +A_1\otimes I_{2m}\bfy_1,\label{y2_1}\end{eqnarray}

\no in place of (\ref{y2}), where, evidently,
\begin{equation}\label{A1}\hat u = \left(u-\I_{s2}\I_{s1}^{-1}e\right)\in
\RR^{k-s}, \qquad A_1=\I_{s2}\I_{s1}^{-1}\in\RR^{k-s\times
s}.\end{equation}

\no By setting
\begin{equation}\label{B1B2}
B_1 = \I_{s1} \P_{s1}^T\O_1 \in\RR^{s\times s}, \qquad B_2 =
\I_{s1} \P_{s2}^T \O_2\in\RR^{s\times k-s},\end{equation}

\no substitution of (\ref{y2_1}) into (\ref{y1}) then provides, at
last, the system of block size $s$ to be actually solved:
\begin{eqnarray}\label{onlys}
F(\bfy_1) &\equiv& \bfy_1 - e\otimes y_0 - h\left[ B_1 \otimes
I_{2m} f(\bfy_1) + \right.\\ &&\left. B_2 \otimes I_{2m}
f\left(\hat u\otimes y_0 +A_1\otimes I_{2m} \bfy_1\right)\right] =
\bf0.\nonumber\end{eqnarray}

By using the simplified Newton method for solving (\ref{onlys}),
and setting
\begin{equation}\label{C} C=B_1+B_2A_1 \in\RR^{s\times s},\end{equation}

\no one obtains the iteration:
\begin{eqnarray}\label{Newt}
 \left( I_s\otimes I_{2m} - hC\otimes J_0 \right) \bfdel^{(n)} &=&
-F(\bfy_1^{(n)}) \equiv \bfpsi_1^{(n)},\\ \bfy_1^{(n+1)} &=& \bfy_1^{(n)} +
\bfdel^{(n)},
\qquad n=0,1,\dots, \nonumber
\end{eqnarray}

\no where $J_0$ is the Jacobian of $f(y)$ evaluated at $y_0$.
Because of the result of Theorem~\ref{mainres}, the following
property of matrix $C$ holds true ~\cite{BIT10}.

\begin{theo}\label{isoC}
The eigenvalues of matrix $C$ in (\ref{C}) coincide with those of
matrix (\ref{Xs}), i.e., with the eigenvalues of the matrix of the
Butcher array of the Gauss-Legendre method of order $2s$.
\end{theo}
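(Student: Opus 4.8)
The plan is to read off, from the block structure of the Runge--Kutta matrix $A=\I_s\P_s^T\O$ of HBVM$(k,s)$, a factorization that exhibits $C$ as the ``small'' $s\times s$ companion of $A$, and then to invoke the Isospectral Property of Theorem~\ref{mainres}.

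First I would partition the abscissae into the $s$ fundamental nodes and the $k-s$ silent ones, writing $\I_s=(\I_{s1}^T\ \I_{s2}^T)^T$, $\P_s=(\P_{s1}^T\ \P_{s2}^T)^T$ and $\O=\diag(\O_1,\O_2)$. Since $\I_{s1}$ is nonsingular and $A_1=\I_{s2}\I_{s1}^{-1}$, so that $\I_{s2}=A_1\I_{s1}$, a direct block computation, with $B_1,B_2$ as defined in (\ref{B1B2}), gives
\[
A=\pmatrix{cc} B_1 & B_2\\ A_1B_1 & A_1B_2\endpmatrix
=\pmatrix{c} I_s\\ A_1\endpmatrix\pmatrix{cc} B_1 & B_2\endpmatrix\equiv UV ,
\qquad U\in\RR^{k\times s},\quad V\in\RR^{s\times k},
\]
whereas, directly from the definition (\ref{C}),
\[
C=B_1+B_2A_1=\pmatrix{cc} B_1 & B_2\endpmatrix\pmatrix{c} I_s\\ A_1\endpmatrix=VU .
\]

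Next I would use the elementary identity $\det(\lambda I_k-UV)=\lambda^{\,k-s}\det(\lambda I_s-VU)$, valid for any rectangular factors $U\in\RR^{k\times s}$, $V\in\RR^{s\times k}$: it shows that the characteristic polynomial of $A=UV$ equals $\lambda^{k-s}$ times that of $C=VU$, so that the $s$ eigenvalues of $C$ are precisely the eigenvalues of $A$ other than the forced $(k-s)$-fold zero. By Theorem~\ref{mainres} the nonzero eigenvalues of $A$ coincide with the eigenvalues of the matrix $X_s$ in (\ref{Xs}), i.e.\ of the Butcher matrix of the Gauss--Legendre method of order $2s$; since the latter is nonsingular, it has exactly $s$ nonzero eigenvalues, and these must then account both for all the nonzero eigenvalues of $A$ and for the whole spectrum of $C$. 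Therefore the eigenvalues of $C$ coincide with those of $X_s$, which is the assertion.

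The only point needing care is the multiplicity bookkeeping: a priori the $s\times s$ matrix $C$ could carry a zero eigenvalue that is not among the $s$ nonzero eigenvalues of $A$. This is excluded precisely by the invertibility of the Gauss--Legendre Butcher matrix (equivalently, of $X_s$), which together with Theorem~\ref{mainres} forces $\det(\lambda I_s-C)$ to have no root at $\lambda=0$ and hence to coincide with $\det(\lambda I_s-X_s)$. Beyond this the argument is routine; I expect the main step is simply recognizing the factorization $A=UV$, $C=VU$, after which the conclusion is immediate from Theorem~\ref{mainres} and the $UV$/$VU$ spectral identity.
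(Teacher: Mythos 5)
Your proof is correct. The block computation is right: partitioning $A=\I_s\P_s^T\O$ by fundamental and silent abscissae and using $\I_{s2}=A_1\I_{s1}$ gives exactly $A=UV$ with $U\in\RR^{k\times s}$ the matrix stacking $I_s$ over $A_1$ and $V=(B_1~~B_2)$, while $C=B_1+B_2A_1=VU$; the Sylvester identity $\det(\lambda I_k-UV)=\lambda^{k-s}\det(\lambda I_s-VU)$ then identifies $\sigma(C)$ with the nonzero part of $\sigma(A)$, and Theorem~\ref{mainres} together with the nonsingularity of the Gauss--Legendre Butcher matrix concludes. The paper reaches the same intermediate fact by a different device: it rewrites the Runge--Kutta system in the equivalent form coming from (\ref{y1}) and (\ref{y2_1}) and identifies $\sigma(A)$ with the spectrum of the matrix pencil (\ref{pencil}); a generalized-eigenvector computation then shows that any nonzero pencil eigenvalue $\mu$ forces $\bfv=A_1\bfu$ and $\mu\bfu=C\bfu$. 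Your route is purely algebraic and buys something the paper's does not state explicitly: the exact characteristic-polynomial identity, which accounts for multiplicities and rules out a spurious zero eigenvalue of $C$ automatically once one knows $X_s$ in (\ref{Xs}) is invertible --- a bookkeeping point you rightly flag, whereas the paper's eigenvector argument is silent on it. What the paper's route buys in exchange is that the computation stays anchored to the blended reformulation of the discrete problem, i.e.\ to the very context in which $C$ arises as the iteration matrix. Both arguments ultimately rest on Theorem~\ref{mainres} for the identification with the Gauss--Legendre spectrum.
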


\proof Assuming, as usual for simplicity, that the fundamental
stages are the first $s$ ones, one has that the discrete problem
$$\bfy = \pmatrix{c}e\\ u\endpmatrix \otimes y_0 +h A\otimes
I_{2m} f(\bfy),$$

\no which defines the Runge-Kutta formulation of the method, is
equivalent, by virtue of (\ref{y1}), (\ref{y2_1}), (\ref{A1}),
(\ref{B1B2}), to
\begin{eqnarray*}\lefteqn{\pmatrix{cc} I_s & O_{s\times r}\\ -A_1 & I_r
\endpmatrix\otimes
I_{2m} \pmatrix{c} \bfy_1\\ \bfy_2\endpmatrix =}\\&& \pmatrix{c} e\\
\hat u\endpmatrix\otimes y_0 +h\pmatrix{cc} B_1 & B_2\\
O_{r\times s} & O_{r\times r} \endpmatrix\otimes I_{2m} \pmatrix{c} f(\bfy_1)\\
f(\bfy_2)\endpmatrix,\end{eqnarray*}

\no where, as usual, $r=k-s$. Consequently, the eigenvalues of the
matrix $A$ defined in (\ref{AMAT}) coincides with those of the
pencil
\begin{equation}\label{pencil}\left(~\pmatrix{cc} I_s &O_{s\times r}\\ -A_1
& I_r\endpmatrix,~ \pmatrix{cc} B_1 &B_2\\ O_{r\times s} &
O_{r\times r}\endpmatrix~\right).\end{equation}

\no That is,
$$\mu\in\sigma(A) ~~\Leftrightarrow~~ \mu\pmatrix{cc} I_s &O_{s\times r}\\
-A_1 & I_r\endpmatrix\pmatrix{c} \bfu\\ \bfv\endpmatrix =\pmatrix{cc} B_1 &B_2\\
O_{r\times s} & O_{r\times r}\endpmatrix\pmatrix{c} \bfu\\
\bfv\endpmatrix,$$

\no for some nonzero vector $(\bfu^T,\bfv^T)^T$. By setting
$\bfu=\bfo$, one obtains the $r$ zero eigenvalues of the pencil.
For the remaining $s$ (nonzero) ones, it must be $\bfv=A_1\bfu$,
so that:
$$\mu \bfu = \left( B_1\bfu + B_2\bfv \right) = \left( B_1\bfu + B_2A_1\bfu
\right) = C\bfu ~~\Leftrightarrow~~ \mu\in\sigma(C).\QED$$

\medskip
\begin{rem}\label{algo}
 From the result of Theorem~\ref{isoC}, it follows that the spectrum
of $C$ doesn't depend on the choice of the $s$ fundamental
abscissae, within the nodes $\{\tau_i\}$. On the contrary, its
condition number does: the latter appears to be minimized when the
fundamental abscissae are symmetrically distributed and
approximately evenly spaced in the interval $[0,1]$. As a practical
``{\em rule of thumb}'', the following algorithm appears to be
almost optimal:
\begin{enumerate}
\item let the $k$ abscissae $\{\tau_i\}$ be chosen according to a
Gauss-Legendre distribution of $k$ nodes;

\item then, let us consider $s$ equidistributed nodes in $(0,1)$,
say $\{\hat \tau_1, \dots, \hat\tau_s\}$;

\item select, as the fundamental abscissae, those nodes among the $\{\tau_i\}$
which are the closest ones to the $\{\hat \tau_j\}$;

\item define matrix $C$  in (\ref{C}) accordingly.
\end{enumerate}

\no Clearly, for the above algorithm to provide a unique solution
(resulting in a symmetric choice of the fundamental abscissae),
the difference $k-s$ has to be even which, however, can be easily
accomplished.
\end{rem}

In order to give evidence of the effectiveness of the above
algorithm, in Figure~\ref{condC} we plot the condition number of
matrix $C=C(k,s)$, for $s=2,\dots,5$, and $k\ge s$. As one can
see, the condition number of $C(k,s)$ turns out to be nicely
bounded, for increasing values of $k$, which makes the
implementation (that we are going to analyze in the next section)
effective also when finite precision arithmetic is used. For
comparison, in Figure~\ref{condC1}  there is the same plot,
obtained by fixing the fundamental abscissae as the first $s$
ones. In such a case, the condition number of $C(k,s)$ grows very
fast, as $k$ is increased.

\begin{figure}[hp]
\centerline{\includegraphics[width=\textwidth,height=8.5cm]{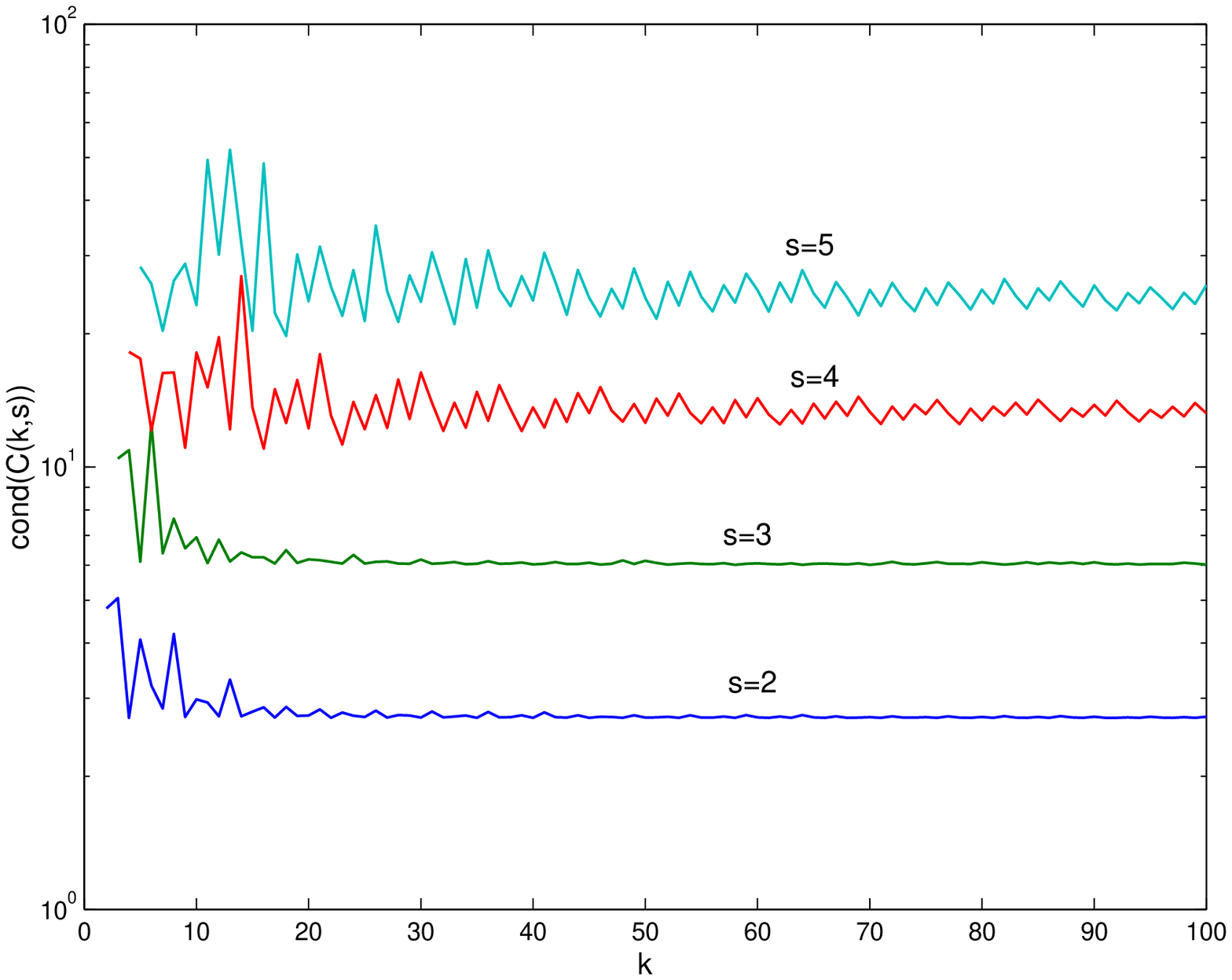}}
\caption{\protect\label{condC} Condition number of the matrix
$C=C(k,s)$, for $s=2,3,4,5$ and $k=s,s+1,\dots,100$, with  the
fundamental abscissae chosen according to the algorithm sketched
in Remark~\ref{algo}.}
\bigskip
\medskip
\centerline{\includegraphics[width=\textwidth,height=8.5cm]{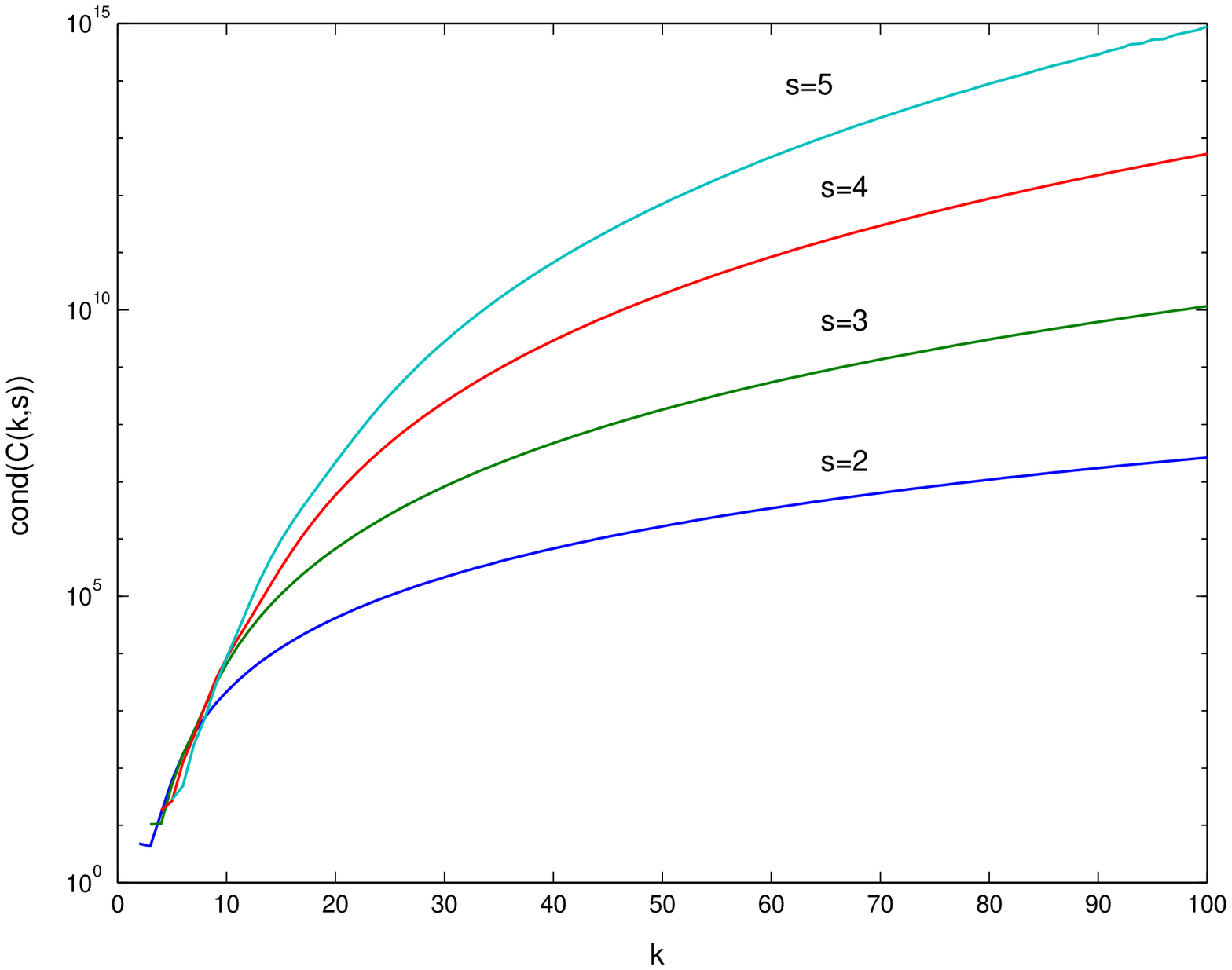}}
\caption{\protect\label{condC1} Condition number of the matrix
$C=C(k,s)$, for $s=2,3,4,5$ and $k=s,s+1,\dots,100$, with the
fundamental  abscissae chosen as the first $s$ ones.}
\end{figure}

\section{Blended implementation}
We observe that, since $C$ is nonsingular, we can recast problem
(\ref{Newt}) in the {\em equivalent form}
\begin{equation}\label{Newt2}
 \gamma\left( C^{-1}\otimes I_{2m} - hI_s\otimes J_0 \right) \bfdel^{(n)} =
-\gamma C^{-1}\otimes I_{2m}\, F(\bfy_1^{(n)}) \equiv \bfpsi_2^{(n)},
\end{equation}

\no where $\gamma>0$ is a free parameter to be chosen later. Let
us now introduce the {\em weight (matrix) function}
\begin{equation}\label{teta}
 \theta = I_s\otimes \Phi^{-1}, \qquad \Phi = I_{2m} -h\gamma
J_0\in\RR^{2m\times 2m},
\end{equation}

\no and the {\em blended formulation} of the system to be solved,
\begin{eqnarray}\nonumber
 M\bfdel^{(n)} &\equiv& \left[ \theta \left(I_s\otimes I_{2m}-hC\otimes
J_0\right) + \right.\\ \nonumber &&\left. (I-\theta)\gamma\left( C^{-1}\otimes
I_{2m}- h I_s\otimes J_0 \right)\right] \bfdel^{(n)}
\\ &=& \theta \bfpsi_1^{(n)}+(I-\theta)\bfpsi_2^{(n)}\equiv
\bfpsi^{(n)}.\label{blendNewt}
\end{eqnarray}

\no The latter system has again the same solution as the previous
ones, since it is obtained as the {\em blending}, with weights
$\theta$ and $(I-\theta)$, of the two equivalent forms
(\ref{Newt}) and (\ref{Newt2}). For iteratively solving
(\ref{blendNewt}), we use the corresponding {\em blended
iteration}, formally given by
\cite{B00,BM02,BM04,BM07,BM08,BM09,BM09a,BMM06,BT2,M04,BIMcode}:
\begin{equation}\label{blendit}\bfdel^{(n,\ell+1)} = \bfdel^{(n,\ell)}
-\theta\left( M\bfdel^{(n,\ell)}-\bfpsi^{(n)}\right), \qquad
\ell=0,1,\dots.\end{equation}

\begin{rem}\label{nonlin}
A nonlinear variant of the iteration (\ref{blendit}) can be
obtained, by starting at $\bfdel^{(n,0)}=\bf 0$ and updating
$\bfpsi^{(n)}$ as soon as a new approximation is available. This
results in the following iteration:
\begin{equation}\label{blendit1}\bfy^{(n+1)} = \bfy^{(n)}
+\theta\bfpsi^{(n)}, \qquad n=0,1,\dots.\end{equation}\end{rem}

\begin{rem}\label{only1} We observe that, for actually performing the
iteration (\ref{teta})--(\ref{blendit}), as well as
(\ref{blendit1}), one has to factor {\em only} the matrix $\Phi$
in (\ref{teta}), which has the same size as that of the continuous
problem.\end{rem}

We end this section by observing that the above iterations
(\ref{blendit}) and (\ref{blendit1}) depend on a free parameter
$\gamma$. It will be chosen in order to optimize the convergence
properties of the iteration,  according to a linear analysis of
convergence, which is sketched in the next section.

\section{Linear analysis of convergence}\label{linear}

The linear analysis of convergence for the iteration
(\ref{blendit}) is carried out by considering the usual scalar
test equation (see, e.g., \cite{BM09} and the references therein),
$$y' = \lam y, \qquad \Re(\lam)<0.$$

\no By setting, as usual $q=h\lam$, the two equivalent
formulations (\ref{Newt}) and (\ref{Newt2}) become, respectively
(omitting, for sake of brevity, the upper index $n$),
$$(I_s-q C) \bfdel = \bfpsi_1, \qquad \gamma( C^{-1} -q I_s) \bfdel =
\bfpsi_2.$$

\no Moreover, \begin{equation}\label{tetaq}\theta =
\theta(q) = (1-\gamma q)^{-1} I_s,\end{equation}

\no and the blended iteration (\ref{blendit}) becomes
\begin{equation}\label{blendq}
\bfdel^{(\ell+1)} = (I_s -\theta(q)M(q))\bfdel^{(\ell)} +
\theta(q)\bfpsi(q),\end{equation}

\no with
\begin{eqnarray}\label{Mq} M(q) &=&
\theta(q)\left(I_s-q C\right) +(I_s-\theta(q))\gamma\left(
C^{-1}-q I_s\right), \\ \bfpsi(q) &=&
\theta(q)\bfpsi_1+(I_s-\theta(q))\bfpsi_2.\nonumber\end{eqnarray}

\no Consequently, the iteration will be convergent if and only if the
spectral radius $\rho(q)$ of the iteration matrix,
\begin{equation}\label{Zq}Z(q) = I_s-\theta(q)M(q),\end{equation}

\no is less than 1. The set
$$\Gamma = \left\{ q\in\CC \,:\, \rho(q)<1 \right\}$$

\no is the {\em region of convergence of the iteration}. The
iteration is said to be:\begin{itemize}
\item {$A$-convergent}, ~ if $\CC^-\subseteq \Gamma$;

\item {$L$-convergent}, ~ if it is $A$-convergent and,
moreover, ~$\rho(q)\rightarrow 0$,~ as ~$q\rightarrow\infty$.
\end{itemize}

\begin{table}[t]
\caption{\protect\label{params} optimal values (\ref{gammaopt}),
and corresponding maximum amplification factors (\ref{rostarmin}),
for various values of $s$.} \centerline{\begin{tabular}{|r|r|r|}
\hline
$s$ & $\gamma$ & $\rho^*$\\
\hline
2 &0.2887 &0.1340\\
3 &0.1967 &0.2765\\
4 &0.1475 &0.3793\\
5 &0.1173 &0.4544\\
6 &0.0971 &0.5114\\
7 &0.0827 &0.5561\\
8 &0.0718 &0.5921\\
9 &0.0635 &0.6218\\
10 &0.0568 &0.6467\\
\hline
\end{tabular}}
\end{table}

\no For the iteration (\ref{blendq}) one verifies that (see
(\ref{tetaq}), (\ref{Mq}), and (\ref{Zq}))
\begin{equation}\label{Zq1}
Z(q) = \frac{q}{(1-\gamma q)^2}C^{-1}\left(C-\gamma I_s\right)^2,
\end{equation}

\no which is the null matrix at $q=0$ and at $\infty$.
Consequently, the iteration will be $A$-convergent (and,
therefore, $L$-convergent), provided that {\em maximum
amplification factor},
\begin{equation}\label{rostar}
\rho^* \equiv \max_{\Re(q)=0} \rho(q) ~\le 1.
\end{equation}

\no From (\ref{Zq1}) one has that, by setting hereafter
$\sigma(C)$ the spectrum of matrix $C$,
$$\mu\in\sigma(C)
~\Leftrightarrow~\frac{q(\mu-\gamma)^2}{\mu(1-\gamma
q)^2}\in\sigma(Z(q)).$$

\no By taking into account that
$$\max_{\Re(q)=0}\frac{|q|}{|(1-\gamma q)^2|} =
\frac{1}{2\gamma},$$

\no one then obtains that
$$\rho^* = \max_{\mu\in\sigma(C)}
\frac{|\mu-\gamma|^2}{2\gamma|\mu|},$$

\no For Gauss-Legendre methods (and, then, for any matrix $C$
having the same spectrum), it can be shown that (see
\cite{BM02,BMM06}) the choice
\begin{equation}\label{gammaopt} \gamma = |\mu_{\min}|\equiv
\min_{\mu\in\sigma(C)}|\mu|,\end{equation}

\no minimizes $\rho^*$, which turns out to be given by
\begin{equation}\label{rostarmin} \rho^* = 1 -\cos \phi_{\min}
~<1, \qquad \phi_{\min}={\rm Arg}(\mu_{\min}). \end{equation}

In Table~\ref{params}, we list the optimal value of the parameter
$\gamma$, along with the corresponding maximum amplification
factor $\rho^*$, for various values of $s$, which confirm that the
iteration (\ref{blendq}) is $L$-convergent.

\begin{rem}
We then conclude that the {\em blended iteration} (\ref{blendit})
turns out to be $L$-convergent, for any HBVM$(k,s)$ method, for
all $s\ge1$ and $k\ge s$.
\end{rem}

We end this chapter, by emphasizing that the property of
$L$-convergence has proved to be computationally very effective,
as testified by the successful implementation of the codes {\tt
BiM} and {\tt BiMD} \cite{M04,BIMcode}. We then expect good
performances also for the {\em blended implementation} of
HBVM$(k,s)$.

\chapter{Notes and References}\label{chap6}

The approach of using discrete line integrals has been used, at
first, by Iavernaro and Trigiante, in connection with the study of
the properties of the trapezoidal rule \cite{IT0,IT1,IT2}.

It has been then extended by Iavernaro and Pace \cite{IP1}, thus
providing the first example of conservative methods, basically an
extension of the trapezoidal rule, named {\em $s$-stage
trapezoidal methods}: this is a family of energy-preserving
methods of order 2, able to preserve polynomial Hamiltonian
functions of arbitrarily high degree.

Later generalizations allowed Iavernaro and Pace \cite{IP2}, and
then Iavernaro and Trigiante \cite{IT3}, to derive energy
preserving methods of higher order.

The general approach, involving the shifted Legendre polynomial
basis, which has allowed a full complete analysis of HBVMs, has
been introduced in \cite{brugnano09bit} (see also \cite{BIT0})
and, subsequently, developed in \cite{BIT09}.

The Runge-Kutta formulation of HBVMs, along with their connections
with collocation methods, has been studied in \cite{BIT10_1}.

The isospectral property of HBVMs has been also studied in
\cite{BIT10}, where the {\em blended} implementation of the
methods has been also introduced.

Computational aspects, concerning both the computational cost and
the efficient numerical implementation of HBVMs, have been studied
in \cite{BIS} and \cite{BIT10}.

Relevant examples have been collected in \cite{BIS1}, where the
potentialities of HBVMs are clearly outlined, also demonstrating
their effectiveness with respect to standard symmetric and
symplectic methods.

Blended implicit methods have been studied in a series of papers
\cite{B00,BM02,BM04,BM07,BM08,BM09,BM09a,BMM06,M04} and have been
implemented in the two computational codes {\tt BiM} and {\tt
BiMD} \cite{BIMcode}.


\begin{thebibliography}{99}
\setlength{\itemsep}{0cm}

\bibitem{AS} M.\,Abramovitz, I.A.\,Stegun. {\em Handbook of Mathematical
Functions}. Dover, 1965.

\bibitem{B00} L.\,Brugnano. Blended block BVMs (B$_3$VMs): A family of
economical implicit methods for ODEs. {\em J. Comput. Appl.Math.}
{\bf 116} (2000) 41--62.

\bibitem{BIS} L.\,Brugnano, F.\,Iavernaro, T.\,Susca. Hamiltonian BVMs (HBVMs):
implementation details and applications. ``Proceedings of ICNAAM
2009'', {\em AIP Conf. Proc.} {\bf 1168} (2009) 723--726.

\bibitem{BIS1} L.\,Brugnano, F.\,Iavernaro, T.\,Susca. Numerical
comparisons between Gauss-Legendre methods and Hamiltonian BVMs
defined over Gauss points. {\em Monograf\'\i as de la Real
Academia de Ciencias de Zaragoza}, Special Issue devoted to the
65th birthday of Manuel Calvo,  (Submitted) 2010 ({\tt
arXiv:1002.2727}).

\bibitem{BIT0} L.\,Brugnano, F.\,Iavernaro, D.\,Trigiante. Hamiltonian BVMs (HBVMs):
a family of ``drift-free'' methods for integrating polynomial
Hamiltonian systems. ``Proceedings of ICNAAM 2009'', {\em AIP
Conf. Proc.} {\bf 1168} (2009) 715--718.

\bibitem{brugnano09bit} L.\,Brugnano, F.\,Iavernaro,
D.\,Trigiante. Analisys of Hamiltonian Boundary Value Methods
(HBVMs) for the numerical solution of polynomial Hamiltonian
dynamical systems. {\em BIT}, submitted for publication (2009)
({\tt arXiv:0909.5659}).

\bibitem{BIT09} L.\,Brugnano, F.\,Iavernaro,
D.\,Trigiante. Hamiltonian Boundary Value Methods (Energy
Conserving Discrete Line Integral Methods). {\em Jour. Numer.
Anal., Industrial and Appl. Math.}, submitted for publication
(2009) ({\tt arXiv:0910.3621}).

\bibitem{BIT10} L.\,Brugnano, F.\,Iavernaro,
D.\,Trigiante. Isospectral Property of HBVMs and their Blended
Implementation. {\em BIT}, submitted for publication (2010) ({\tt
arXiv:1002.1387}).

\bibitem{BIT10_1} L.\,Brugnano, F.\,Iavernaro, D.\,Trigiante. Isospectral
Property of HBVMs and their connections with Runge-Kutta
collocation methods. {\em Preprint}, 2010 ({\tt arxiv:1002.4394}).

\bibitem{BM02} L.\,Brugnano, C.\,Magherini. Blended implementation of block
implicit methods for ODEs. {\em Appl. Numer. Math.} {\bf 42} (2002) 29--45.

\bibitem{BM04} L.\,Brugnano, C.\,Magherini. The {\tt BiM} code for the numerical
solution of ODEs. {\em J. Comput. Appl. Math.} {\bf 164--165} (2004) 145--158.

\bibitem{BM07} L.\,Brugnano, C.\,Magherini. Blended implicit methods for solving
ODE and DAE problems, and their extension for second order problems. {\em J.
Comput. Appl. Math.} {\bf 205} (2007) 777--790.

\bibitem{BM08} L.\,Brugnano, C.\,Magherini. Blended General Linear Methods based
on Generalized BDF. {\em AIP Conf. Proc.} {\bf 1048} (2008) 871--874.

\bibitem{BM09} L.\,Brugnano, C.\,Magherini. Recent Advances in Linear Analysis
of Convergence for Splittings for Solving ODE problems. {\em Appl. Numer. Math.}
{\bf 59} (2009) 542--557.

\bibitem{BM09a} L.\,Brugnano, C.\,Magherini. Blended General Linear Methods
based on Boundary Value Methods in the GBDF family. {\em Journal of
Numerical Analysis, Industrial and Applied Mathematics} {\bf 4},
1-2 (2009) 23--40.

\bibitem{BMM06} L\, Brugnano, C.\,Magherini, F.\,Mugnai. Blended implicit
methods for the numerical solution of DAE problems. {\em J. Comput. Appl.
Math.} {\bf 189} (2006) 34--50.

\bibitem{BT} L.\,Brugnano, D.\,Trigiante. {\em Solving
Differential Problems by Multistep Initial and Boundary Value
Methods}. Gordon and Breach, Amsterdam, 1998.

\bibitem{BT2} L.\,Brugnano, D.\,Trigiante. Block implicit methods for ODEs, in:
D. Trigiante (Ed.), {\em Recent Trends in Numerical Analysis}.
Nova Science Publ. Inc., New York, 2001, pp. 81--105.

\bibitem{BT09} L.\,Brugnano, D.\,Trigiante. Energy drift in the numerical
integration of Hamiltonian problems. {\em Journal of Numerical
Analysis, Industrial and Applied Mathematics} (to appear).

\bibitem{Faou} E.\,Faou, E.\,Hairer, T.-L.\,Pham. Energy conservation with non-symplectic
methods: examples and counter-examples. {\em BIT Numerical
Mathematics} {\bf 44} (2004) 699--709.

\bibitem{hairer06gni} E.\,Hairer, C.\,Lubich, G.\,Wanner. {\em Geometric
Numerical Integration. Structure-Preserving Algorithms for
Ordinary Differential Equations, 2$^{nd}$ ed.}, Springer, Berlin,
2006.

\bibitem{HNW} E.\,Hairer, G.\,Wanner. {\em Solving Ordinary Differential
Equations I, 2nd ed.}, Springer, Berlin, 2000.

\bibitem{HW} E.\,Hairer, G.\,Wanner. {\em Solving Ordinary
Differential Equations II}, Springer, Berlin, 1991.

\bibitem{IP1} F.\,Iavernaro, B.\,Pace. $s$-Stage Trapezoidal Methods for the
Conservation of Hamiltonian Functions of Polynomial Type. {\em AIP
Conf. Proc.} {\bf 936} (2007) 603--606.

\bibitem{IP2} F.\,Iavernaro, B.\,Pace. Conservative Block-Boundary
Value Methods for the Solution of Polynomial Hamiltonian Systems.
{\em AIP Conf. Proc.} {\bf 1048} (2008) 888--891.

\bibitem{IT0} F.\,Iavernaro, D.\,Trigiante. On some conservation properties of
the Trapezoidal Method applied to Hamiltonian systems. {\em ICNAAM 2005
proceedings}, T.E.\,Simos, G.\,Psihoyios, Ch.\,Tsitouras (Eds.). Wiley-VCH,
Weinheim, 2005, pp. 254--257 (ISBN:3527406522).

\bibitem{IT1} F.\,Iavernaro, D.\,Trigiante. Discrete conservative vector
fields induced by the trapezoidal method. {\em J. Numer. Anal.
Ind. Appl. Math.} {\bf 1} (2006) 113--130.

\bibitem{IT2} F.\,Iavernaro, D.\,Trigiante. State-dependent symplecticity
and area preserving numerical methods. {\em J. Comput. Appl.
Math.} {\bf 205} no.\,2  (2007) 814--825.

\bibitem{IT3} F.\,Iavernaro, D.\,Trigiante. High-order symmetric schemes for
the energy conservation of polynomial Hamiltonian problems. {\em
J. Numer. Anal. Ind. Appl. Math.} {\bf 4},1-2 (2009) 87--101.

\bibitem{M04} C.\,Magherini. {\em Numerical Solution of Stiff ODE-IVPs via
Blended Implicit Methods: Theory and Numerics.} PhD thesis,
Dipartimento di Matematica ``U.\,Dini'', Universit\`a degli Studi
di Firenze, September 2004 (Available at the url \cite{BIMcode}).

\bibitem{James} J.D.\,Mireles James. Celestial mechanics notes,
Set~1: Introduction to the \mbox{$N$-Body Problem}. {\em Available
at url:}\\ {\tt
http://www.math.utexas.edu/users/jjames/celestMech}

\bibitem{BIMcode} Codes {\tt BiM}/{\tt BiMD} Homepage:\\ {\tt
http://www.math.unifi.it/\~{}brugnano/BiM/index.html}

\end{thebibliography}
\end{document}